\documentclass[10pt,twoside]{article}
\usepackage{amsmath, amsfonts, amsthm, amssymb, amscd, a4wide}
\usepackage{ color, graphicx}
\usepackage{mathrsfs, dsfont,latexsym}
\usepackage{paralist}
\usepackage[mathcal]{euscript} 
\usepackage{dsfont, pxfonts, verbatim} 
\usepackage{bbm}
\numberwithin{equation}{section}  
\usepackage{perpage} 
\MakePerPage{footnote}
\oddsidemargin 	.in
\evensidemargin .in
\textwidth  6.45in
\topmargin    - 0.75in
\textheight	9.35in
\numberwithin{equation}{section}
        \newtheorem{theorem}{Theorem}[section]
        \newtheorem{proposition}[theorem]{Proposition}
        \newtheorem{lemma}[theorem]{Lemma}
        \newtheorem{corollary}[theorem]{Corollary} 
        \newtheorem{definition}[theorem]{Definition} 

\let\oldmarginpar\marginpar
\renewcommand\marginpar[1]{\-\oldmarginpar[\raggedleft\footnotesize #1]
{\raggedright\footnotesize #1}}
\newcommand \etat {\widetilde \eta}

\newcommand \EDscr {\mathscr{E_\star D}} 
\newcommand \sign {\text{sgn}} 
\newcommand \Mscr {\mathscr M}
\newcommand \Escr {\mathscr E}
\newcommand \Escrt {\widetilde{\mathscr E}}
\newcommand \NI {{\bf I}}
\newcommand \NII {{\bf II}}
\newcommand \NIII {{\bf III}}
\newcommand \NIV {{\bf IV}}
\newcommand \NV {{\bf V}}
\newcommand \NVI {{\bf VI}}
\newcommand \rhob {\overline \rho}
\newcommand\DD {\mathbb D}

\newcommand \Dcalh {\widehat{\mathcal D}}
\newcommand \Et {\widetilde E} 
\newcommand \Ft {\widetilde F} 
\newcommand \Gt {\widetilde G} 
\newcommand \Dt {\widetilde D} 
\newcommand \ut {\hskip.02cm \widetilde u \hskip.02cm} 
\newcommand \kappat {\hskip.02cm \widetilde \kappa \hskip.02cm} 
\newcommand \Tbb {{\mathbb T}}

\newcommand \be {\begin{equation}}
\newcommand \bel {\be\label}
\newcommand \ee {\end{equation}}
\newcommand \la \langle
\newcommand \ra \rangle

\newcommand	\RR 		{\mathbb R}  
\newcommand	\TT 		{\mathbb T}  

\newcommand \del {{\partial}}
\newcommand \eps \epsilon
\newcommand \lam {\lambda}

\newcommand \loc {\text{loc}} 

\begin{document}

\title{The finite energy method for compressible fluids.
\\
The Navier-Stokes-Korteweg model}  
\author{Pierre Germain\footnote{Courant Institute of Mathematical Sciences, New York University, 
215 Mercer Street, NY10010 New York, USA.
Email: {\sl pgermain@cims.nyu.edu.}} 
\hskip.13cm and 
Philippe G. LeFloch\footnote{Laboratoire Jacques-Louis Lions \& Centre National de la Recherche Scientifique, 
Universit\'e Pierre et Marie Curie (Paris 6), 4 Place Jussieu, 75252 Paris, France. 
Email : {\sl contact@philippelefloch.org.}
\,
\newline
2000\textit{\ AMS Subject Class.} Primary: 35L65. Secondary: 76L05. 
\textit{Key Words and Phrases.} Euler equations, finite energy, higher integrability, nonlinear Sobolev inequality, effective energy, effective dissipation, Navier-Stokes-Korteweg system, zero viscosity-capillarity limits}
}
\date{December 2012}

\maketitle

\begin{abstract}
This is the first of a series of papers devoted to the initial value problem for the Euler system of 
compressible fluids and augmented versions containing higher-order terms. We encompass solutions that have finite total energy and 
enjoy a certain symmetry (for instance, plane symmetry); these solutions may have unbounded amplitude 
and contain cavitation regions in which the mass density vanishes. 
In the present paper, we are interested in dispersive shock waves and 
analyze the zero viscosity--capillarity limit associated with the Navier-Stokes-Korteweg system.
Specifically, we establish the existence of finite energy solutions as well as their convergence toward entropy solutions to the Euler system. 
We encompass a broad class of nonlinear Navier-Stokes-Korteweg constitutive laws,
which is determined by two main conditions relating the viscosity and capillarity coefficients, that is,  on one hand 
the {\sl strong coercivity condition} (as we call it) which provides a favorable sign for the integrated dissipation associated with an effective energy, 
and on the other hand the {\sl tame capillarity condition} (as we call it), which restricts pointwise the strength of the capillarity relatively to the viscosity. 
Rather mild conditions on the growth of the constitutive functions are aso imposed, which are required in order 
to define finite energy weak solutions  to the  Navier-Stokes-Korteweg system, even in the presence of cavitation. 
Our method of proof relies on fine algebraic properties of the Euler system and combines together energy and effective energy estimates, dissipation and effective dissipation estimates, a nonlinear Sobolev inequality, high--integrability properties for the mass density and for the velocity, and compactness properties based on entropies. 
\end{abstract}
\maketitle 

\tableofcontents 


\section{Introduction and outline}
\label{sec:1}

\subsection{Euler system of compressible fluid flows}

In this paper and the companions \cite{GL2,GL3}, we establish existence, compactness, and convergence results for (augmented versions of) the Euler system of compressible fluid dynamics, when the fluid flow under consideration has plane, cylindrical, or spherical symmetry or the fluid evolves within a nozzle with variable cross-section. The proposed Finite Energy Method, as we call it, encompasses {\sl real fluids} (as the standard class of polytropic fluids is too restrictive in applications) as well as broad classes of {\sl augmented Euler systems,} which incorporate physically-relevant small-scale terms. The proposed method allows us to validate the singular limit problem associated with these models ---in presence of cavitation and shock waves---
and, specifically, to establish the convergence with finite energy solutions to the augmented models toward finite energy solutions to the Euler system. It originates from pioneering works by DiPerna \cite{DiPerna1,DiPerna2} on bounded solutions and by LeFloch and Westdickenberg \cite{LW} on finite energy solutions.  

In the present paper, we rigorously validate the {\sl vanishing viscosity--capillarity method} under mild and physically realistic assumptions, as is now presented. Roughly speaking, we determine conditions under which 
`dispersive shock waves' converge to shock wave solutions to the Euler system when the capillarity and viscosity tend to zero.  Recall that the {\bf Euler system} for isentropic compressible fluid flows (in plane symmetry, for simplicity) reads
\be
\label{Euler0} 
\aligned
\rho_t + (\rho u)_x & = 0, 
\\
(\rho u)_t + (\rho u^2 + p(\rho))_x & = 0,
\endaligned
\ee
in which $t\geq 0$ represents the time variable while the spatial variable $x$ takes its values in either the torus $\Tbb$ or the real line $\RR$. 
The fluid is characterized by its  mass density $\rho=\rho(t,x) \geq 0$ and velocity $u=u(t,x) \in \RR$. 
The two equations \eqref{Euler0} form a nonlinear hyperbolic system of conservation laws, provided the pressure function $p=p(\rho)$ satisfies the monotonicity condition 
\be
\label{hyperc}
p'(\rho) >0 \qquad (\rho>0). 
\ee
Importantly, strict hyperbolicity fails at the vacuum when $\lim_{\rho \to 0} p'(\rho) = 0$. 
The Euler system is also genuinely nonlinear in the sense of Lax \cite{Lax} (away from the vacuum), provided the pressure satisfies the convexity condition 
\be
\label{GNLc}
p''(\rho) + \rho \, p'(\rho) >0  \qquad (\rho>0). 
\ee
These two standard assumptions are made in the present study. (See Lax \cite{Lax} and Dafermos \cite{Dafermos} for a background on nonlinear hyperbolic systems.) 

In continuum physics, compressible fluid flows are often governed by {\sl general equations of state} $p= p(\rho)$ 
and, in addition, the selection of shock waves in such flows may be driven by possibly nonlinear,  {\sl higher--order modeling terms}. The physical models typically 
contain second-order derivatives of the unknowns 
and take into account the effect of the viscosity, capillarity, and Hall term in the fluid under consideration. It is our objective in the present series of papers to develop the mathematical tools  which are required for investigating the existence and properties of complex fluid flows. In particular, it has been recognized in the past fifteen years that ``nonclassical'' wave structures and dispersive shock waves 
do arise in such flows, which are not observed with polytropic fluids and with the vanishing viscosity method, whereas they do play critical role in many physical applications and, therefore, deserve the attention of applied mathematicians. 
We refer to LeFloch~\cite{LeFloch-book} for a review of some mathematical tools, and 
to Section~\ref{sec-section1.4}, below, for numerous references in a variety of areas:
van der Waals fluid dynamics; 
dispersive shallow water flows; quantum hydrodynamic; Bose-Einstein condensates; 
Boussinesq model; Green-Naghdi models.

As far as plane-symmetric solutions to the Euler system are concerned, 
the vanishing viscosity problem was first studied by DiPerna in the pioneering work \cite{DiPerna1,DiPerna2}.  DiPerna relied on Tartar's compensated compactness method \cite{Tartar0,Tartar1} and a compactness embedding theorem by Murat \cite{Murat1,Murat2}. This method was later extended by Ding, Chen, and Luo \cite{DCL}, Morawetz \cite{Morawetz}, Lions, Perthame, Souganidis \cite{LPS}, Lions, Perthame, and Tadmor \cite{LPT}, Chen and LeFloch \cite{CL1,CL2}, and LeFloch and Shelukhin \cite{LeFlochShelukhin}. In these works, 
all solutions under consideration have arbitrary large but  {\sl bounded} amplitude. 

Several years ago, LeFloch and Westdickenberg \cite{LW} opened the way to constructing solutions 
within the broader class of {\sl solutions with finite energy} and realized that working within such a large class of solutions was necessary in order to overcome certain  limitations in DiPerna's theory. In this class, they established the first existence result of {\sl radially symmetric} fluid flows ---including the singularity at the center of radial coordinates. The present series of papers follows this strategy and, by exhibiting suitable algebraic and differential properties of the Euler equations 
and by covering broad classes of constitutive laws, we develop a general tool in order to handle the complex fluid flows arising in physical applications. 
In this first paper, we focus on the zero viscosity--capillarity limit and on flows with  
plane symmetry, and we refer to the companion papers \cite{GL2,GL3} for other aspects of the proposed method. 

Throughout, we adopt the following standard notation. We denote by $C$ a positive constant and by $C(\Escr_0)$ a constant depending on some known quantity $\Escr_0$; the specific value of this constant may change from one occurence to another. Finally, it will be convenient to write $A \lesssim B$ when the inequality $A \leq C \, B$ holds for some constant $C>0$, while we will use the notation $p(\rho) \simeq \rho^\gamma$ if $p(\rho) = C \, \rho^\gamma$. 
The so-called ``Japanese bracket'' $\langle x \rangle$ stands for $\sqrt{1+x^2}$ (for a real variable $x$).  


\subsection{Navier-Stokes-Korteweg system for viscous-capillary fluid flows}
\label{sectionNSK}

We consider the Cauchy problem associated with the {\bf Navier-Stokes Korteweg} NSK system 
\bel{eq:101} 
\aligned
\rho_t + (\rho u)_x =& \, 0, 
\\
(\rho u)_t + \big( \rho u^2 + p(\rho) \big)_x =& \, 
L[\rho,u]_x + K[\rho]_x,
\\
L[\rho,u] :=& \, \mu(\rho) \, u_x, 
\qquad
\qquad 
\\
K[\rho] :=& \, \rho \kappa(\rho) \, \rho_{xx} + \frac{1}{2} \, \big( \rho \kappa'(\rho) - \kappa(\rho) \big) \, \rho_x^2, 
\endaligned
\ee
with initial conditions  
\bel{eq:101b-weak}
(\rho,\rho u)|_{t=0} = (\rho_0, \rho_0u_0) \quad \text{ on } \DD
\ee
for some prescribed functions $\rho_0 \geq 0$ and $u_0$ defined on $\DD$. 
Here, the nonlinear viscosity $\mu = \mu(\rho)$ and the nonlinear capillarity $\kappa=\kappa(\rho)$ are prescribed and smooth functions in $\rho>0$, which may be singular at the vacuum. 
Throughout and for definiteness, we always assume that the following (mild) assumptions are satisfied:  
\begin{subequations}
\label{hypmukappa}
\begin{align}
& \aligned
& (1) \quad \mu(\rho) > 0 \quad \text{(for all $\rho>0$),} 
\\
& (2) \quad \lim_{\rho \to 0} \mu(\rho) = 0,        && \quad \liminf_{\rho \to +\infty} \mu(\rho) >0,
\endaligned 
\\
& (3) \quad 
\mbox{ either } \, \kappa \equiv 0 \quad 
 \mbox{or } \quad 
\kappa(\rho) > 0 \, \text{ (for all $\rho>0$),} 
\\
& (4) \quad \rho \, p' \lesssim p, \qquad \rho \, | \mu'| \lesssim \mu, 
\qquad \rho \, | \kappa'| \lesssim \kappa. 
\end{align}
\label{s:geodesics}\end{subequations}

The (mass density and velocity) dependent variables $(\rho,u) \in \RR_+ \times \RR$ are unknown functions of $(t,x) \in \RR^+ \times \DD$.
We distinguish between two cases: 
\begin{itemize}
\item $\DD = \TT$ and the NSK system is thus set on the torus or, equivalently, a bounded interval with periodic boundary conditions.
\item $\DD = \RR$ and the NSK system is set on the real line, subject to the following asymptotic condition, 
for a given constant mass density $\rho_\star \geq 0$, 
\bel{eq:cd}
\lim_{|x| \to +\infty} (\rho, \rho u) (t,x) = (\rho_\star,0), \qquad t  \geq 0. 
\ee
\end{itemize}

Solutions to \eqref{eq:101}  may exhibit cavitation phenomena, in the sense that the mass density $\rho$ may vanish. While this phenomena is unavoidable in solutions to the Euler system \eqref{Euler0}, it can be avoided in \eqref{eq:101}, {\sl provided} the viscosity or capillarity are sufficiently ``strong'' near the vacuum.  
While it is straighforward to define finite energy weak solutions when the mass density remains bounded away from zero, 
we will need to introduce (in Section~\ref{sec:3}) a suitable {\sl notion of weak solutions with cavitation} in order to handle more general viscosity and capillarity coefficients. 
Observe that if $\rho_0>0$ we can state the initial condition as 
\bel{eq:101b}
(\rho, u)|_{t=0} = (\rho_0, u_0) \qquad \text{ on } \DD. 
\ee

Consider first the equations \eqref{eq:101} posed on the torus $\TT$. 
The local physical energy associated with the NSK system (cf.~Section~\ref{sec:energy}, below) reads  
\bel{en-cap}
E[\rho,u] := \frac{1}{2} \rho u^2 + \rho e(\rho) + \frac{1}{2} \kappa(\rho) \rho_x^2,
\ee
in which the internal energy $e=e(\rho)$ is defined by $e'(\rho) : = p(\rho) / \rho^2$, while the total energy within $\TT$ is defined as 
$$
\Escr[\rho,u](t) := \int_{\TT} E[\rho,u](t, \cdot) \, dx. 
$$
Interestingly enough, the NSK system is endowed with {\sl another} energy functional, which we refer to here as the {\sl effective energy}.
It is based on the {\bf effective velocity,} defined by 
\bel{en-vel}
\ut := u + \frac{\mu(\rho)}{\rho^2} \rho_x. 
\ee
More precisely, the local effective energy is $\Et[\rho,u] = E[\rho,\ut]$ and the total effective energy is defined to be 
$$
\Escrt[\rho,u] := \Escr [\rho,\ut].
$$
Observe that both $\Escr$ and $\Escrt$ control $\kappa^{1/2} \rho_x$ in $L^2$; and that a combination of $\Escr$ and $\Escrt$ controls $\frac{\mu}{\rho^{3/2}}\rho_x$ in $L^2$.

Consider next the Cauchy problem \eqref{eq:101}--\eqref{eq:101b} posed on the real line $\RR$. The local physical energy
needs to be renormalized in agreement with \eqref{eq:cd} as follows: 
\bel{eq:Estar}
\aligned
E_\star[\rho,u] :=
& \frac{1}{2} \, \rho u^2 + \rho e(\rho) - \rho_\star e(\rho_\star) - \big( e(\rho_\star)+ \rho_\star e'(\rho_\star) \big) \, (\rho - \rho_\star) + \frac{1}{2} \kappa(\rho) \rho_x^2, 
\\
\Escr_\star[\rho,u] :=& \int_{\RR} E_\star[\rho,u]\, dx, 
\endaligned
\ee
while the functionals $\Et_\star$ ans $\Escrt_\star$ are defined similarly. Of course, the expressions given on the torus coincide with the ones on the real line by replacing $\rho_\star$ by $0$. 
From now on, we therefore use the latter notation {\sl in both cases.} 

Our result about the NSK system concerns the Cauchy problem with initial data $(\rho_0, u_0): \DD \to [0, +\infty) \times \RR$
with finite total (physical and effective) energy, 
i.e.
\be
\label{sittelle}
\Escr_\star[\rho_0,u_0] + \Escrt_\star[\rho_0,u_0] < + \infty.
\ee

In the course of writing this paper, we discovered a natural restriction on the capillarity coefficient of the Navier-Stokes-Korteweg system, which we refer to as the {\sl Strong Coercivity} (SC) condition.
It turns out to play a fundamental role in the mathematical theory; see Section~\ref{sec:SCC}, below, and especially \eqref{eq:203} and \eqref{eq:203bis}.
Under the condition (SC) and for all finite energy initial data, we will establish the following {\bf finite total energy-dissipation estimate:}   
\be
\label{cigogne}
(\EDscr)\qquad  
\begin{cases} \quad
\aligned
& \sup_{t \geq 0} \big( \Escr_\star[\rho,u] (t)+ \Escrt_\star[\rho,u](t) \big) 
 + \iint_{(0,\infty)\times \DD} \mu(\rho) u_x^2 \, dtdx
\\
& + \iint_{(0,\infty)\times \DD} 
\Bigg(
\frac{\mu(\rho) p'(\rho)}{\rho^2} (\rho_x)^2 + \frac{\mu(\rho) \kappa(\rho)}{\rho} \rho_{xx}^2 + \frac{\mu(\rho)\kappa(\rho)}{\rho^3} \rho_x^4 \Bigg)
\, dtdx 
\\&
\lesssim \Escr_\star[\rho_0,u_0] + \Escrt_\star[\rho_0,u_0],
\endaligned
\end{cases}
\ee
for which we refer to the following section.

Before stating the theorem, we also introduce the {\bf no-cavitation (NC)} condition 
\be
\label{behave}
(NC) \qquad 
\aligned
& \mbox{For some $s>0$,} \quad 
  \liminf_{\rho \to 0}  \rho^{s-1/2} \mu(\rho) >0
\quad
\text{ or }
\quad 
\liminf_{\rho \to 0} \rho^{2+s} \kappa(\rho) >0. 
\endaligned
\ee
As we show below, the condition (NC) ensures that vacuum does not appear in the fluid if it is not present initially. However, 
the following theorem covers, both, the non-cavitating and the cavitating regimes. 

\begin{theorem}[Weak solutions to the Navier--Stokes--Korteweg system.] 
\label{theo:11} 

Consider the Cauchy problem \eqref{eq:101}--\eqref{eq:101b} associated with the Navier-Stokes Korteweg system, when the spatial domain $\DD$ is either 
the torus or the real line and, in the latter case, the condition \eqref{eq:cd} is assumed for some $\rho_\star \geq 0$.
Assume that the initial data $\rho_0, u_0$ have finite total physical energy and effective 
energy (see \eqref{sittelle}), and that the viscosity and capillarity coefficients satisfy the strong coercivity condition (SC). Finally, consider one of the following four set-ups: 
\begin{itemize}

\item[(i)] The no-cavitation condition (NC) is satisfied and $\rho_0(x)>0$ for all $x \in \DD$.

\item[(ii)] $\DD = \TT$; the capillarity coefficient $\kappa \equiv 0$ vanishes identically; 
the integral $\int_\DD \rho_0 |u_0|^{2+s} \, dx$ is finite; for some $s>0$,  
the inequality $p(\rho)^2 \lesssim \rho^{s/2} \mu(\rho)$ holds for small $\rho>0$. 

\item[(iii)] $\DD = \RR$ with $\rho_\star=0$; $\kappa \equiv 0$; for some $s>0$, the inequalities $\rho \lesssim \mu(\rho) \lesssim \rho^s$ hold 
for small $\rho>0$; 
the total mass of the fluid $\Mscr[\rho_0] := \int_{\RR} \rho_0 \, dx$ is finite.

\item[(iv)] $\DD = \RR$ with $\rho_\star \geq 0$; the viscosity-capillarity pair $(1,\kappa)$ satisfies (SC); 
the inequalities $\mu(\rho) \lesssim \rho^{2/3}$ and $\kappa(\rho) \lesssim \frac{\mu^2(\rho)}{\rho^3}$ hold for small $\rho>0$. 

\end{itemize}
Then, the Cauchy problem for the Navier-Stokes Korteweg system  \eqref{eq:101} 
admits a global-in-time weak solution $(\rho,u): [0, +\infty) \times \DD \to [0, +\infty) \times \RR$ which has finite total energy
and dissipation in the sense $(\EDscr)$.
In the non-cavitating case (i), one also has $\rho(t,x)>0$ for all $(t,x)$.  
\end{theorem}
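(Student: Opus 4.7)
The plan is to obtain the weak solution as the limit of a family of smooth, strictly positive approximate solutions, using the estimate $(\EDscr)$ as the main source of uniform control. First, I would regularize by mollifying the initial data into $(\rho_0^\varepsilon, u_0^\varepsilon)$ with $\rho_0^\varepsilon \geq \varepsilon$, truncating $\mu$ and $\kappa$ away from the vacuum, and (in the case $\DD=\RR$) restricting to a bounded interval with boundary data matching $(\rho_\star,0)$. The resulting regularized system is uniformly parabolic in $(\rho,u)$, so local existence of a smooth, strictly positive solution $(\rho^\varepsilon, u^\varepsilon)$ is standard; the energy identities of the next step then extend this to a global-in-time solution.

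Second, I would derive on $(\rho^\varepsilon, u^\varepsilon)$ both the physical energy balance for $\Escr_\star$ and the effective energy balance for $\Escrt_\star$, the latter obtained by multiplying the momentum equation by $\ut$ and integrating by parts using the specific algebraic form of $K[\rho]$. A convex combination of the two identities, under the strong coercivity condition (SC), produces the full estimate $(\EDscr)$: it controls $\sup_t (\Escr_\star+\Escrt_\star)$ and the $L^1_{t,x}$ norms of $\mu u_x^2$, $(\mu p'/\rho^2)\rho_x^2$, $(\mu\kappa/\rho)\rho_{xx}^2$, and $(\mu\kappa/\rho^3)\rho_x^4$. In case (i), the no-cavitation condition (NC) is invoked in addition to propagate a uniform pointwise lower bound on $\rho^\varepsilon$ through a direct estimate for $(\log \rho)_t$, ruling out the appearance of vacuum. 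In cases (ii)--(iv) the extra structural hypotheses on $\mu,\kappa,p$ and the supplementary moment of the initial data are combined with a nonlinear Sobolev inequality to upgrade $(\EDscr)$ into the higher integrability of $\rho$ and $\rho u$ needed to control the pressure and convective fluxes near the set $\{\rho=0\}$.

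Third, with these uniform bounds I would extract a weakly convergent subsequence and use an Aubin--Lions type argument based on the continuity equation, together with the $L^2_{t,x}$ control of $\mu^{1/2}u_x$, to deduce strong compactness of $\rho^\varepsilon$ in $L^p_{\loc}$. This suffices to pass to the limit in the convective term $\rho u^2$, in the pressure $p(\rho)$, in the viscous flux $L[\rho,u]$, and (in case (i)) in the capillarity flux $K[\rho]$. The hard part will be the passage to the limit in $K[\rho]$ in the cavitating cases (ii)--(iv): the borderline $L^1_{t,x}$ bounds on $\rho\kappa(\rho)\rho_{xx}$ and on $(\rho\kappa'-\kappa)\rho_x^2$ leave essentially no slack, so one must rewrite $K[\rho]$ in a favorable divergence form compatible with the weak notion of solution allowing cavitation, and then exploit the tame capillarity condition, together with the $L^2$ control of $\kappa^{1/2}\rho_x$ inherited from both $\Escr_\star$ and $\Escrt_\star$, to exclude any spurious concentration on the vacuum set. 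The initial trace is finally recovered by the usual time-continuity argument applied to $\rho$ and $\rho u$.
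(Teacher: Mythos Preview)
Your overall architecture (regularize, derive $(\EDscr)$, extract compactness, pass to the limit) matches the paper's, but several of the specific mechanisms you name are either misplaced or would not actually work.

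First, in case~(i) you propose to keep $\rho^\varepsilon$ away from vacuum by ``a direct estimate for $(\log\rho)_t$''. The mass equation gives $(\log\rho)_t=-u_x-u\rho_x/\rho$, and neither term is controlled pointwise by $(\EDscr)$; you only have $\sqrt{\mu}\,u_x\in L^2_{t,x}$. The paper instead uses a purely spatial argument: from $(\EDscr)$ one has $\frac{\mu^2}{\rho^3}\rho_x^2$ (and, if $\kappa>0$, also $\kappa\rho_x^2$) in $L^\infty_tL^1_x$, and the condition (NC) turns this into $(\rho^{-\alpha})_x\in L^\infty_tL^2_x$ for some $\alpha>0$. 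Mass conservation (or the normalized energy on $\RR$) provides, at each time, a point where $\rho$ is bounded below, and Sobolev embedding then gives the uniform lower bound. Your proposed route does not give this.

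Second, you say the difficult passage to the limit in $K[\rho]$ occurs in cases (ii)--(iv), but in (ii) and (iii) one has $\kappa\equiv 0$ and there is no capillarity term at all. The genuine difficulty in (ii)--(iii) is the \emph{viscous} term $\mu(\rho)u_x$ and the convective term $\rho u^2$ near vacuum. The paper handles the former by proving that $\mu(\rho^n)$ is uniformly $\mathcal C^{1/4}_{t,x}$ (from the $L^\infty_tL^2_x$ bound on $\mu(\rho)_x$ and an $L^2_t$ bound on $\mu(\rho)_t$), hence converges uniformly; one can then split $\int \phi\,\mu(\rho^n)u^n_x$ over $\{\mu(\rho)>\eps\}$ and its complement, using weak $L^2$ convergence of $u^n_x$ on the former and smallness of $\sqrt{\mu}$ on the latter. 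This $\mathcal C^{1/4}$ compactness is the key device and is absent from your plan.

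Third, the ``nonlinear Sobolev inequality'' in the paper is used only to verify the (SC) condition; it does not upgrade $(\EDscr)$ to higher integrability of $\rho u^2$. The higher integrability of the velocity is obtained by completely different arguments that depend on the case: in (ii) one multiplies the momentum equation by $|u|^s u$ (Mellet--Vasseur), using precisely the hypothesis $p(\rho)^2\lesssim\rho^{s/2}\mu(\rho)$ to close the estimate; in (iii) and (iv) one uses the weak entropy pairs $(\eta^\psi,q^\psi)$ with $\psi(v)=v|v|^a$, exploiting the sign of the entropy flux $\widetilde q$, and it is here that the growth conditions $\mu\lesssim\rho^{2/3}$ (and, in (iii), $\rho\lesssim\mu\lesssim\rho^s$) enter. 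Your proposal does not identify any of these mechanisms, and without them the passage to the limit in $\rho u^2$ is not justified on the vacuum set.
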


Observe that the right-hand side of the NSK system must be understood in the sense of distributions. 
Theorems~\ref{guillemot1}, \ref{guillemot2}, \ref{guillemot2bis}, and~\ref{guillemot3}, stated and proven below,
correspond to the above four cases, respectively.  

While many well--posedness results are available in the literature for augmented versions of the Euler system such as the NSK system \eqref{eq:101}, 
as well as variants and multi--dimensional generalizations, most papers, however, restrict attention to the purely viscous case ($\kappa=0$) and to non-cavitating solutions. We do not attempt here to review the vast literature on the Navier--Stokes system and only quote works on one-dimensional solutions. 
Assuming $\kappa \equiv 0$, Hoff~\cite{Hoff}  first treated the case of constant viscosity and finite energy solutions. 
More recently, Mellet and Vasseur~\cite{MelletVasseur} covered the viscosity coefficients $\mu(\rho) = \rho^\alpha$ with $\alpha<1/2$ (a condition which prevents cavitation) and initial data with finite energy and (in our terminology) finite effective energy. 
More recently, for the same class of initial data and viscosity coefficients, Jiu and Xin~\cite{JX} treated the interval $\alpha>1/2$. 

The study of the Euler equations with a capillarity term was tackled much more recently. 
Under various conditions on the viscosity and capillarity coefficients, strong solutions were constructed in Danchin and Desjardins~\cite{DanchinDesjardins}, Benzoni-Gavage, Danchin, Descombes~\cite{BDD1,BDD2}, and Hao~\cite{Hao}, 
while the existence of weak solutions was established by Bresch and Desjardins~\cite{BreschDesjardins}, J\"ungel~\cite{Jungel},
as well as Gamba, J\"ungel, and Vasseur~\cite{GambaJungelVasseur}. In contrast, the existence theory proposed here relies on 
the strong coercivity condition (SC), only. 


\subsection{The zero viscosity--capillarity limit}

Our second objective is the convergence with finite energy solutions to  (NSK) in the limit of vanishing viscosity and capillarity. 
We therefore replace $\mu, \kappa$ in the right-hand side of \eqref{eq:101} by 
coefficients $\mu^\eps, \kappa^\eps$ depending on a parameter $\eps$ and approaching zero when $\eps \to 0$.
For definiteness and without genuine loss of generality, we rescale the given
 viscosity and capillarity coefficients $\mu, \kappa$, as follows: 
\bel{eq:scaling}
\mu^\eps(\rho) = \eps \mu(\rho), 
\qquad
\qquad \kappa^\eps (\rho) = \delta(\eps) \kappa(\rho),
\ee
where $\delta=\delta(\eps)$ tends to zero with $\eps$.  Then, we consider the finite energy weak solutions $(\rho^\eps, u^\eps)$ associated with the NSK system$_\eps$: 
\bel{eq:102} 
\aligned
\rho^\eps_t + (\rho^\eps u^\eps)_x =& \, 0, 
\\
(\rho^\eps u^\eps)_t + (\rho^\eps (u^\eps)^2 + p(\rho^\eps) \big)_x =& \, L^\eps[\rho^\eps,u^\eps]_x + K^\eps[\rho^\eps]_x,
\\
L^\eps[\rho^\eps,u^\eps] :=& \, \mu^\eps(\rho^\eps) \, u^\eps_x, 
 \quad 
\\
K^\eps[\rho^\eps] :=& \rho^\eps \kappa^\eps(\rho^\eps) \, \rho^\eps_{xx} + \frac{1}{2} \, \big( \rho^\eps {\kappa^\eps}'(\rho^\eps) - \kappa^\eps(\rho^\eps) \big) (\rho_x^\eps)^2, 
\endaligned
\ee
and a family of initial data $\rho^\eps_0, u^\eps_0$: 
\bel{eq:102b}
(\rho^\eps,\rho^\eps u^\eps)|_{t=0} = (\rho^\eps_0, \rho^\eps_0 u^\eps_0).
\ee
We thus consider the singular limit problem $\eps \to 0$ and we are going to establish that finite energy weak solutions to the Navier-Stokes-Korteweg system converge to finite energy solutions to the Euler system \eqref{Euler0}.  

In the case $\kappa^\eps \equiv 0$ and restricting attention to polytropic fluids $p(\rho) \simeq \rho^\gamma$ (with $1<\gamma<3$), Chen and Perepelitsa \cite{CP} first established a convergence result of the form above; they restricted attention 
to the viscosity coefficient $\mu^\eps(\rho) = \eps$ and, by following LeFloch and Westdickenberg's method \cite{LW}, observed that the diffusion term can be
 controled by a~priori estimate derived earlier by Kanel \cite{Kanel} for a different purpose. Next, Huang et al.~\cite{HPWWZ} treated the viscosity functions $\mu^\eps(\rho) = \eps \rho^\alpha$ and polytropic fluids with $\frac{2}{3} < \alpha < \gamma$. More recently, Charve and Haspot~\cite{CH} were the first to tackle the general viscous-capillary problem and established a convergence theorem for polytropic fluids, the viscosity coefficient 
$\mu^\eps(\rho) = \eps \rho$, and the capillarity coefficient $\kappa^\eps(\rho) = \eps^2\rho^{-1}$.  
 
In the present paper, we cover a broad and physically realistic class of viscosity, capillarity, and pressure functions. Our main restriction 
imposes that the capillarity term is ``tame'' with respect to the viscosity. In the theory of nonclassical solutions to hyperbolic conservation laws 
(reviewed in \cite{LeFloch-book}), it has been recognized that the capillarity should be bounded by the square of the viscosity, that is, 
\be
\label{abc}
\kappa(\rho) \lesssim \mu(\rho)^2, 
\ee
for, otherwise, oscillating patterns would be generated in the limit of vanishing capillarity and would overcome the smoothing effect of the viscosity term.  
For instance, this inequality can be justified by considering the behavior of traveling wave solutions (or by numerically computing the vanishing viscosity-capilarity 
limit; cf.~\cite{HL}). When the dispersion effects are dominant, the method adopted in the present paper does not apply and 
the limit, in general, fails to be a weak solution to the associated hyperbolic system: this issue was first investigated by Lax and Levermore \cite{LL1,LL2,LL3} 
in their work on the small dispersion limit of the Korteweg-de Vries equation. 

The inequality \eqref{abc}, while being a general feature of vanishing diffusive-dispersive limits of hyperbolic systems, 
is, in the present setup, valid if $\rho$ is close to a constant, say $\rho \sim 1$. In order to take into account the scaling of the NSK system near the vacuum, 
we now consider (for the purpose of motivating our tame condition) mass density functions $\rho$ that remain close to some constant $r>0$, say. 
By rescaling an (NSK) solution $\rho$ by $r$, we obtain a new solution $\rhob = \rho/ r$ to (NSK), with rescaled viscosity and capillarity coefficients 
$$
\overline \mu(\overline \rho) = r^{-1} \mu(r \overline \rho), 
\qquad \qquad
\kappa(\overline \rho) = r \, \kappa(r \overline \rho). 
$$
When applied to this rescaled system, the condition~(\ref{abc}) becomes 
$
r \, \kappa(r \overline \rho) \lesssim {\mu(r \overline \rho)^2 \over r^2}, 
$
in which $\overline \rho$ remains close to $1$, and $r>0$ is a parameter. By assuming that the implied constant is independent of $r$, this inequality leads us to the condition
\bel{ABC}
\kappa(\rho) \lesssim \frac{\mu(\rho)^2}{\rho^3}
\qquad \quad 
(\rho >0),  
\ee
which is the main restriction required in order to validate the zero viscosity-capillarity limit. The condition~(\ref{ABC}) is thus formulated as  
\be 
\label{defTC}
(TC) \quad 
\begin{cases}
& \kappa(\rho) \lesssim \frac{\mu(\rho)^2}{\rho^3}
\qquad \quad 
(\rho >0), 
\\
& \delta(\eps) \lesssim \eps^2, 
\end{cases}
\ee
and is refered to as the {\bf Tame Capillarity} (TC) condition. 
As explained above, this condition is necessary since, when it is violated, highly oscillating patterns arise in solutions to (NSK) 
and prevent their strong convergence as the viscosity and capillarity approach zero (cf.~again \cite{LeFloch-book}).  

Our second assumption restricts the growth of the viscosity and capillarity coefficients and, specifically, we impose 
the following {\bf Growth Rate} (GR) condition (for all $\rho>0$):  
\bel{technical}
(GR) \qquad  
\aligned
\mu(\rho) \lesssim \rho^{2/3}, 
\\
\rho \kappa' + 5 \kappa \geq 0. 
\endaligned
\ee
The second inequality, essentially, requires the lower bound $\kappa \gtrsim \rho^{-5}$, or else $\kappa$ vanishes identically.
Recall finally that, under the coercivity condition (SC), the energy inequalities associated with (NSK)$_\eps$ yield the following uniform bound for the solutions 
$\rho^\eps, u^\eps$: 
\be
\label{cigogne2}
\begin{split}
& \sup_{t \geq 0} \big( \mathcal{E}_\star[\rho^\eps,u^\eps] (t)+ \widetilde{\mathcal{E}}_\star[\rho^\eps,u^\eps](t) \big) 
 + \iint_{(0,\infty)\times \DD} \mu(\rho) (u^\eps_x)^2 \, dtdx
\\
& + \iint_{(0,\infty)\times \DD} 
\Bigg(
\frac{\mu(\rho^\eps) p'(\rho^\eps)}{(\rho^\eps)^2} (\rho^\eps_x)^2 + \frac{\mu(\rho^\eps) \kappa(\rho^\eps)}{\rho^\eps} (\rho^\eps_{xx})^2 
+ \frac{\mu(\rho^\eps)\kappa(\rho^\eps)}{(\rho^\eps)^3} (\rho^\eps_x)^4 \Bigg)
\, dtdx
\\
&
\lesssim \Escr_\star[\rho^\eps_0,u^\eps_0] + \Escrt_\star[\rho^\eps_0,u^\eps_0],
\end{split}
\ee
in which we now assume that the total energy of the initial data remains uniformly bounded as $\eps \to 0$. 

\begin{theorem}[The zero viscosity--capillarity limit] 
\label{theo:12}
Consider the Navier-Stokes-Korteweg NSK system$_\eps$ posed on the real line $\RR$ (with $\rho_\star \geq 0$) and for polytropic pressure laws 
$p \simeq \rho^\gamma$ with $\gamma \in (1, 5/3]$. Assume that $\kappa^\eps$ and $\mu^\eps$ have the form~(\ref{eq:scaling}),  where 
the viscosity and capillarity coefficients satisfy the tame condition (TC), the strong coercivity condition (SC) and the growth rate condition (GR). 
Consider global-in-time solutions $(\rho^\eps,u^\eps): [0, +\infty) \times \RR \to [0, +\infty) \times \RR$ associated with a family of initial data with uniformly bounded total (physical and effective) energy
$$
\limsup_{\eps\to 0} \Big( 
\Escr_\star[\rho_0^\eps,u_0^\eps] + \Escrt_\star[\rho_0^\eps,u_0^\eps] \Big) < +\infty
$$
and 
satisfying the energy-dissipation inequality (\ref{cigogne2}). 
Then, as $\eps\to 0$, the family of solutions $(\rho^\eps,u^\eps)$ converges 
almost everywhere\footnote{At points where the limit $\rho$ vanishes, the momentum $\rho^\eps u^\eps$ converges toward $\rho u$ 
but the velocity $u^\eps$ need not converge.} 
(after possibly extracting a subsequence) toward a limit $(\rho, u):  \RR_+ \times \RR \to [0, +\infty) \times \RR$ which is a weak solution with finite total energy to the Euler system \eqref{Euler0}. 
 When $\rho_\star = 0$ and the total mass is uniformly bounded at time $t=0$, that is, 
$$
\limsup_{\eps\to 0} \Mscr[\rho_0^\eps] < +\infty, 
$$
then the limit has finite total mass for all times, that is, 
$$
\Mscr[\rho(t, \cdot)] < +\infty, \qquad t \geq 0.  
$$
\end{theorem}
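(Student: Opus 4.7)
The natural strategy is the compensated compactness / Young-measure reduction framework that goes back to DiPerna and has been developed through LeFloch--Westdickenberg and Chen--Perepelitsa for finite energy solutions. The starting point is the uniform energy-dissipation bound \eqref{cigogne2}, which, since the initial energies are assumed uniformly bounded, yields uniform $L^\infty_t L^1_x$ control of $\rho^\eps (u^\eps)^2$, $\rho^\eps e(\rho^\eps) - \ldots$, and $\kappa^\eps (\rho^\eps_x)^2$. My first step would be to upgrade these bounds to the higher-integrability estimates that are indispensable in the DiPerna theory: namely, local estimates of the form $\rho^\eps |u^\eps|^3 \in L^1_{\loc}$ and $p(\rho^\eps) |u^\eps|, \, (\rho^\eps)^\gamma \rho^\eps \in L^1_{\loc}$, together with the $L^{\gamma+1}_{\loc}$-type bounds for $\rho^\eps$. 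In the framework announced by the authors, these follow from their ``nonlinear Sobolev inequality'' combined with the effective energy and the dissipation estimate on $\mu(\rho^\eps) (u^\eps_x)^2$; this is what makes the range $\gamma \in (1, 5/3]$ natural, since for such $\gamma$ one can afford the growth $\mu(\rho) \lesssim \rho^{2/3}$ imposed by (GR).

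The core of the argument is then the entropy analysis. For any weak entropy--entropy-flux pair $(\eta,q)$ of the Euler system \eqref{Euler0}, generated by the Lions--Perthame--Tadmor kernel, I would compute, using the NSK$_\eps$ system \eqref{eq:102},
\bel{plan:entropy}
\partial_t \eta(\rho^\eps,u^\eps) + \partial_x q(\rho^\eps,u^\eps) = \partial_x \!\left( \tfrac{\eta_u}{\rho^\eps} \big( L^\eps + K^\eps \big) \right) + R^\eps_{\mu} + R^\eps_{\kappa},
\ee
where $R^\eps_{\mu}$ collects the commutator terms coming from the viscosity and $R^\eps_{\kappa}$ those coming from the capillarity (involving $\rho^\eps_{xx}$ and $(\rho^\eps_x)^2$). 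The plan is to show, for each fixed weak entropy $\eta$ of at most quadratic growth in $u$ and polynomial in $\rho$, that the first term on the right is $\partial_x$ of a function small in $L^2_{\loc}$, while $R^\eps_\mu + R^\eps_\kappa$ is bounded in $L^1_{\loc}$. The dissipation control on $\mu(\rho^\eps)(u^\eps_x)^2$, $\frac{\mu p'}{\rho^2}(\rho^\eps_x)^2$, $\frac{\mu\kappa}{\rho}(\rho^\eps_{xx})^2$, and $\frac{\mu\kappa}{\rho^3}(\rho^\eps_x)^4$ from \eqref{cigogne2}, together with the scalings $\mu^\eps = \eps\mu$, $\kappa^\eps = \delta(\eps)\kappa$ and the tame bound $\delta(\eps) \lesssim \eps^2$ with $\kappa \lesssim \mu^2/\rho^3$, are designed exactly so that every product $\eps \mu(\rho^\eps) u^\eps_x \cdot (\text{bounded})$ and $\delta(\eps)\kappa(\rho^\eps)\rho^\eps_x \cdot (\text{bounded})$ tends to $0$ in $L^2_{\loc}$. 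Combining this decomposition with Murat's lemma yields the $H^{-1}_{\loc}$-compactness of the entropy dissipation measures.

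Once the compensated compactness structure is in place, I would invoke the reduction theorem for Young measures associated with the $2\times 2$ $p$-system for polytropic $p \simeq \rho^\gamma$, $1 < \gamma < 3$ (in the finite-energy version of DiPerna--Chen--LeFloch--Westdickenberg), which forces the Young measure $\nu_{(t,x)}$ associated with $(\rho^\eps, u^\eps)$ to reduce to a Dirac mass at almost every $(t,x)$. This gives the almost-everywhere convergence $\rho^\eps \to \rho$ and $u^\eps \to u$ on $\{\rho>0\}$, and convergence of the momentum $\rho^\eps u^\eps \to \rho u$ everywhere (the latter follows from the uniform $L^2$-bound on $\sqrt{\rho^\eps} u^\eps$ and Cauchy--Schwarz, consistently with the footnote). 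Passing to the limit in the NSK$_\eps$ system is then routine: the flux $\rho^\eps (u^\eps)^2 + p(\rho^\eps)$ converges by dominated convergence using the higher-integrability estimates, while the right-hand side of NSK$_\eps$ tends to zero in $\mathcal D'$ by the same $L^2_{\loc}$ smallness used above.

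The main obstacle, in my view, is the treatment of the capillary remainder $R^\eps_\kappa$ in \eqref{plan:entropy}. Unlike the viscosity contribution, which reduces (essentially) to the well-understood Chen--Perepelitsa computation, the Korteweg stress yields third-order terms $\rho^\eps_{xx}$ and quartic $(\rho^\eps_x)^2$ terms multiplied by derivatives of $\eta$. Extracting an $H^{-1}_{\loc}$-compact part out of these requires pairing each such term carefully with the dissipation controls $\frac{\mu\kappa}{\rho}(\rho^\eps_{xx})^2$ and $\frac{\mu\kappa}{\rho^3}(\rho^\eps_x)^4$ and then invoking exactly the tame bound $\kappa \lesssim \mu^2/\rho^3$ and $\delta(\eps)\lesssim \eps^2$ to gain the required $\eps$-smallness; weights and powers of $\rho^\eps$ near the vacuum must match precisely, which is where the growth condition (GR) and the polytropic exponent $\gamma \le 5/3$ enter. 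Finally, the finite-mass statement when $\rho_\star = 0$ follows by testing the continuity equation against a suitable family of spatial cutoffs and using the uniform bound $\mathcal{M}[\rho_0^\eps] \le C$ together with Fatou's lemma after the a.e. convergence is established.
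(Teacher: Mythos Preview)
Your proposal is correct and follows essentially the same route as the paper: higher integrability of the pressure and of the velocity (Sections~\ref{hipt} and~\ref{sec:43}), then $H^{-1}_{\loc}$-compactness of the entropy dissipation for compactly supported $\psi$ via the decomposition you outline (Proposition~\ref{prop:compact}), and finally the Young-measure reduction of \cite{LW}. One small correction: the higher integrability estimates do not come from the nonlinear Sobolev inequality (that is used only to verify (SC)); rather, $\rho^\eps p(\rho^\eps)\in L^1_{\loc}$ comes from the primitive/commutator trick of multiplying the momentum equation by $\rho\phi$ after integrating in $x$, and $\rho^\eps|u^\eps|^3\in L^1_{\loc}$ comes from the entropy pair generated by $\psi(v)=v|v|$, whose flux has a sign.
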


For simplicity in the presentation, our convergence result is stated for polytropic fluids and for plane-symmetric fluids defined 
on the real line, although these restrictions can actually be removed and we refer to \cite{GL2,GL3} for further details. 
Finally, from Theorem~\ref{theo:12}, 
we recover an existence result first established in \cite{LW}.  

\begin{corollary}[Existence theory for the Euler system with finite energy data]
\label{coro-1}
\noindent
Given any initial data $(\rho_0, u_0)$ with finite total energy 
$$
\Escr_\star[\rho_0,u_0] < +\infty 
$$
for some $\rho_\star \geq 0$, 
the corresponding initial value problem associated with the Euler system of polytropic perfect fluids admits a global--in--time solution $(\rho, u): 
\RR_+ \times \RR \to [0, +\infty) \times \RR$, which has 
finite energy for all times, with 
$$
\Escr_\star[\rho,u](t) \leq \Escr_\star[\rho_0,u_0], \qquad t \geq 0.
$$
Furthermore, when $\rho_\star$ vanishes and the total mass is initially finite, then 
$$
\Mscr[\rho(t, \cdot)] = \Mscr[\rho_0] < +\infty, \qquad t \geq 0.
$$
\end{corollary}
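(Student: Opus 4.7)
The strategy is to derive the corollary from Theorem~\ref{theo:12} by selecting a concrete family of viscosity and capillarity coefficients $(\mu^\eps,\kappa^\eps)$ and a suitable regularization of the initial data. First, fix once and for all a pair $(\mu,\kappa)$ satisfying the strong coercivity condition (SC), the tame capillarity condition (TC), and the growth rate condition (GR); the simplest admissible choice is $\mu(\rho)=\rho^{2/3}$ and $\kappa\equiv 0$, which trivially satisfies (TC) and (GR) and for which the relevant form of (SC) reduces to a manageable algebraic inequality (alternatively, one can take $\kappa(\rho)\simeq \mu(\rho)^2/\rho^3$ to illustrate a genuinely dispersive case). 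Then set $\mu^\eps=\eps\mu$, $\kappa^\eps=\delta(\eps)\kappa$ with $\delta(\eps)\leq \eps^2$, so that the rescaled pair still obeys (SC), (TC), (GR).

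The second step is to regularize the data. Given $(\rho_0,u_0)$ with $\Escr_\star[\rho_0,u_0]<+\infty$, I would produce a family $(\rho_0^\eps,u_0^\eps)$ with $\rho_0^\eps\geq \rho_\star$ (or $\rho_0^\eps>0$ locally) and smooth, for instance by convolving $\rho_0$ with a smooth mollifier of width $\eta(\eps)\to 0$, adding a small constant to stay above $\rho_\star$ on compact sets, and truncating $u_0$ so that $\rho_0^\eps u_0^\eps$ approximates $\rho_0 u_0$. Standard lower semicontinuity of $\Escr_\star$ under mollification (the kinetic term is convex in the momentum and the internal energy density is convex in $\rho$) shows that $\Escr_\star[\rho_0^\eps,u_0^\eps]\to \Escr_\star[\rho_0,u_0]$. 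Since the effective velocity involves $\eps\mu(\rho_0^\eps)(\rho_0^\eps)^{-2}(\rho_0^\eps)_x$, the extra term in the effective energy is
\[
\int \frac{\eps^2 \mu(\rho_0^\eps)^2}{2(\rho_0^\eps)^3}\bigl((\rho_0^\eps)_x\bigr)^2\,dx,
\]
which can be made $O(1)$, or even $o(1)$, by tuning the mollification scale $\eta(\eps)$ against $\eps$. Similarly, when $\rho_\star=0$, mass conservation $\int \rho_0^\eps\,dx\to \int \rho_0\,dx$ is preserved.

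The third step applies Theorem~\ref{theo:11} (in whichever of the cases (i)--(iv) matches the chosen coefficients) to produce, for each $\eps>0$, a finite-energy weak solution $(\rho^\eps,u^\eps)$ of the NSK$_\eps$ system satisfying the energy-dissipation bound (\ref{cigogne2}). Theorem~\ref{theo:12} then yields, along a subsequence, almost everywhere convergence of $(\rho^\eps,\rho^\eps u^\eps)$ toward a pair $(\rho,\rho u)$ that is a finite-energy weak solution of the Euler system~\eqref{Euler0}. The energy inequality $\Escr_\star[\rho,u](t)\leq \Escr_\star[\rho_0,u_0]$ is then obtained by combining the uniform-in-$\eps$ bound $\sup_t \Escr_\star[\rho^\eps,u^\eps](t)\leq \Escr_\star[\rho_0^\eps,u_0^\eps]+\Escrt_\star[\rho_0^\eps,u_0^\eps]$ delivered by~(\ref{cigogne2}), the lower semicontinuity of the integrand $E_\star$ with respect to the a.e.\ convergence of $(\rho^\eps,\rho^\eps u^\eps)$ (using Fatou's lemma on the kinetic part via $\tfrac{|\rho u|^2}{2\rho}$ and convexity of $\rho\mapsto \rho e(\rho)-\rho_\star e(\rho_\star)-\cdots$), and the convergence of the initial energies chosen in the regularization. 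Mass conservation in the limit $\Mscr[\rho(t,\cdot)]=\Mscr[\rho_0]$ follows, when $\rho_\star=0$, from exact mass conservation at the $\eps$ level combined with the uniform-in-$\eps$ tail control provided by the finite energy and finite mass bounds, passing to the limit via Fatou/dominated convergence.

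The main obstacle is the regularization step: one has to craft an approximation $\rho_0^\eps$ whose classical energy converges to $\Escr_\star[\rho_0,u_0]$ while simultaneously keeping the $\eps$-weighted dispersive piece $\eps^2\int \mu(\rho_0^\eps)^2(\rho_0^\eps)^{-3}((\rho_0^\eps)_x)^2\,dx$ uniformly bounded, which is delicate where $\rho_0$ is very small or very irregular and where $(\rho_0^\eps)^{-3}$ may blow up. A careful coupling between the mollification parameter $\eta(\eps)$ and a small additive floor $\sigma(\eps)\to \rho_\star$ (together with the chosen growth rates in (GR)) resolves this; every other step is standard given Theorems~\ref{theo:11} and~\ref{theo:12}.
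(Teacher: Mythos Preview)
Your plan is the one the paper has in mind: the corollary is stated without proof as an immediate consequence of Theorem~\ref{theo:12}, and what you outline---fix an admissible pair $(\mu,\kappa)$, regularize the data so that the effective energy is uniformly bounded, apply Theorem~\ref{theo:11} to produce NSK solutions, then Theorem~\ref{theo:12} to pass to the limit---is precisely the intended route. Your identification of the regularization step as the only nontrivial point is accurate, and your proposed coupling of the mollification scale $\eta(\eps)$ with an additive floor $\sigma(\eps)$ is the standard way to handle it.

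One correction: the sharp inequality $\Escr_\star[\rho,u](t)\le \Escr_\star[\rho_0,u_0]$ does \emph{not} follow from~(\ref{cigogne2}), which carries an unspecified multiplicative constant (the symbol there is $\lesssim$, not $\le$). You should instead invoke the exact physical energy identity~(\ref{cormoran1}), which for each fixed $\eps$ gives
\[
\Escr_\star[\rho^\eps,u^\eps](t)\;\le\;\Escr_\star[\rho_0^\eps,u_0^\eps]
\]
with constant $1$, since $\mathscr{D}\ge 0$. (With $\kappa\equiv 0$ the physical energy contains no capillarity contribution, so this is exactly the Euler energy.) Then lower semicontinuity under a.e.\ convergence together with $\Escr_\star[\rho_0^\eps,u_0^\eps]\to\Escr_\star[\rho_0,u_0]$ yields the claimed inequality. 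The bound~(\ref{cigogne2}) is what you need for the \emph{compactness} (it controls the effective energy and the dissipation terms required by Theorem~\ref{theo:12}), but not for the sharp constant in the limiting energy estimate.
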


Recall that, if the condition (NC) is assumed, the solutions to (NSK) given by Theorem~\ref{theo:11} satisfy the non-cavitating property $\inf_\RR \rho^\eps(t) >0$ (for all $t \geq 0$). In constrast, ``general'' solutions to the Euler system given in Corollary~\ref{coro-1}
may always contain vacuum regions. 
Furthermore, the solutions in Corollary~\ref{coro-1} are shown to satisfy only the conservation of mass and momentum, 
while, in LeFloch and Westdickenberg \cite{LW}, the weak solutions were proven to satisfy all of entropy inequalities (associated with subquadratic test-functions). 


\subsection{Shallow water flows, quantum hydrodynamics, and Boussinesq models}
\label{sec-section1.4} 

First of all, the Navier--Stokes--Korteweg system \eqref{eq:101} describes the isentropic flow of a compressible fluid 
represented by its mass $\rho \geq 0$ and velocity $u \in \RR$, subject to a viscous force $\mu(\rho) u_x$, when the internal energy admits the decomposition $e(\rho) + \kappa(\rho) (\rho_x)^2$. The constitutive behavior of the fluid is determined by prescribing the viscosity function $\mu=\mu(\rho)$ and the capillarity function $\kappa= \kappa(\rho)$, 
as well as the pressure law $p=p(\rho)$ (or, equivalently, the internal energy $e=e(\rho)$).

However, aside from modeling a compressible fluid subject to viscous and capillary forces, the system \eqref{eq:101} also arises in many other physical applications, including in quantum hydrodynamics and in the theory of water waves. Let us present the relevant expressions of the functions $\mu$, $\kappa$, and $p$ for each of these model. 

\begin{itemize}

\item {\bf Polytropic fluids and van der Waals fluids.} The pressure function is classically assumed to be the one of a polytropic perfect fluid $p(\rho) \simeq \rho^\gamma$ with $\gamma \in (1, +\infty)$. In the kinetic derivation of the Navier--Stokes system presented in~\cite{ChapmanCowling}, the viscosity coefficient $\mu$ is a prescribed function of the temperature $T$, specifically $\simeq \sqrt{T}$. 
For a polytropic perfect fluid (with fixed entropy), one thus has $T \simeq \rho^{\gamma-1}$, which leads to the law $\mu(\rho) \simeq \rho^{(\gamma-1)/2}$ for the viscosity. Another classical equation of state which describes complex fluids beyond ideal fluids
is given by the equation of van der Waals (after a standard normalization) 
$p(\rho) = \alpha \, \rho^\gamma \, (3-\rho)^{-\gamma} - 3 \, \rho^2$ (with $\rho < 3$) for some adiabatic exponent $\gamma>1$ and with $\alpha:=8e^{3(\gamma-1) S/8}$ (the constant $S$ representing the entropy). 

\item {\bf Shallow water flows.} 
The Saint Venant model, also called shallow water model, is formally identical to the Euler equations \eqref{Euler0} but corresponds 
to the pressure law $p(\rho) \simeq \rho^2$. The viscous shallow water equation (for instance derived in~\cite{BreschNoble} and also in ~\cite{GP}) corresponds to the coefficients $\mu(\rho) \simeq \rho$ and  $\kappa=0$. 
Finally, in order to include surface tension effects as in~\cite{Marche}, we can take $\kappa(\rho) \simeq 1$.

\item {\bf Quantum hydrodynamics and Bose-Einstein condensates.} It is well--known that the Madelung transform turns a nonlinear Schr\"odinger equation of the form $i \partial_t u - \Delta u =f(|u|^2) u$ 
into the NSK system~(\ref{eq:101}), in which $p=f$, $\mu \equiv 0$, and
$\kappa(\rho) \simeq 1/\rho$. A typical expression for the function $p(\rho)$ is the linear function $\rho$ (for the cubic NLS), but a wealth of more refined models exists such as, for instance, $p(\rho) = A \rho^\nu 
+ B \rho^{2\nu}$; cf.~\cite{KLD} for further details. A closely related model is provided by the quantum Navier--Stokes equation, described in Harvey~\cite{Harvey} and~\cite{BrullMehats}, which takes the 
form~(\ref{eq:101}) with the coefficients $\mu(\rho) \simeq \rho$ and $\kappa(\rho) \simeq 1/\rho$. This model is also discussed  by Hoefer et al. \cite{Hoefer} in connection with Bose-Einstein condensates and
regarded (after transformation) as an extension of Gross-Pitaevskii equation.

\item {\bf Boussinesq model and generalizations.}  It is also instructive to write the system~(\ref{eq:101}) in mass Lagrangian coordinates $(t,y)$ defined by $dy = \rho dx - \rho u dt$ and to 
introduce the new dependent variable $v = 1/\rho$; cf.~Section~\ref{sec:LC} and equations~\eqref{eq:NSKfinal12}. As observed in ~\cite{BDD1}, the Euler equation in Lagrangian coordinates coincides with the Boussinesq equation when $\kappa(\rho) = \rho^{-5}$. 
The standard Boussinesq equation corresponds to $p(\rho) = \rho^2$, but it is possible to consider more general pressure laws; cf.~Bona and Sachs~\cite{BS} for a class of generalized Boussinesq equations. See also  Green and Naghdi~\cite{GN} and Lannes and Bonneton~\cite{LannesBonneton}.  
\end{itemize}

The above examples suggest to consider polynomial (or rational) functions $p$, $\mu$, and $\kappa$. 
Although our framework is much more general, it is thus interesting to indicate the range of application for
 our main results when $p$, $\mu$, and $\kappa$ are power laws of the form 
$$
p(\rho) = p_0 \rho^\gamma, 
\qquad \mu(\rho) = \mu_0 \rho^{\alpha}, \qquad \kappa(\rho) = \kappa_0 \rho^\beta.
$$
First of all, the conditions (NC), (SC), (TC), and (GR) are then equivalent to 
$$
\aligned
(NC) \quad & \mbox{$\alpha < \frac{1}{2}$ or $\beta <-2$},
 \\
(SC) \quad & 2 \alpha -4 < \beta < 2 \alpha - 1,
 \\
(TC) \quad & \beta = 2 \alpha - 3,
 \\
(GR) \quad & \beta \geq -5,
\endaligned
$$
respectively. Observe that the constraint $\mu(0)=0$ imposes $\alpha \geq 0$. It is assumed throughout Theorem~\ref{theo:11} that (SC) holds; furthermore, the four items in this theorem correspond to
$$
\aligned
(i) \quad & \alpha < \frac{1}{2} \ \text{ or } \ \beta <-2,
\\
(ii) \quad &  \kappa_0 = 0;  \qquad && 2\gamma > \alpha,
\\
(iii) \quad &  \kappa_0 = 0;      && 0<\alpha<1,
\\
(iv) \quad &  -4< \beta < -1; \qquad && \beta\geq 2\alpha -3; \qquad &&&\alpha>\frac{2}{3}, 
\endaligned
$$
respectively. The assumptions in Theorem~\ref{theo:12} correspond to 
$$
\beta = 2\alpha -3;  \qquad\beta > -5; \qquad \alpha>\frac{2}{3},  
$$
respectively, so that the latter two inequalities are equivalent to $\alpha >2/3$. 


\subsection{The finite energy method for augmented Euler systems}

We complete this introduction with a sketch of the method of proof developped in this paper and the companion papers \cite{GL2,GL3}. 
Recall that the proposed method is built upon a strategy first introduced by LeFloch and Westdickenberg \cite{LW} 
in order to cope with geometrical effects in flows with radial symmetry or within a nozzle. The present work allows us 
to encompass augmented versions of the Euler system for real compressible fluids, as well as to study singular limit problems.  
In the rest of this section, we especially emphasize this method for the vanishing viscosity-capillarity problem. 

\vskip.15cm 

{\bf 1. Initial data with finite total (physical and effective) energy for augmented Euler systems.} 
Our main assumption is that the initial data of the Euler system \eqref{Euler0} have finite energy, only. When the problem is posed on a compact domain, such as the torus $\TT$ 
we thus assume that the total energy 
$\Escr_\star[\rho_0,u_0]$ of the initial data $(\rho_0, u_0)$ is finite.  
On an unbounded domain such as the real line $\RR$, the asymptotic limit $\rho_\star \geq 0$
of the mass density at infinity must be specified and we must use the normalized energy \eqref{eq:Estar}. 
Furthermore, when $\rho_\star = 0$ is chosen to vanish and the total mass $\Mscr[\rho_0]$ is also finite, then this condition also holds for all times.  
 
Given any augmented version of the Euler system such as the system \eqref{eq:101}, we naturally impose that the augmented total energy is finite at the 
initial time. This energy now takes into account contributions associated with the augmented terms and, specifically for the NSK system, 
the capilarity contributes $\frac{1}{2} \kappa(\rho) \rho_x^2$. 
Importantly, this term has a favorable sign, when the capillarity is positive, as is implied by the physical modeling.   

Furthermore, we observe in this paper that the augmented system \eqref{eq:101} also admits an {\sl effective total energy,} 
obtained by a suitable transformation of the unknowns of the original system. 
The effective velocity $\ut$ defined in \eqref{eq:101} 
is introduced and the corresponding effective energy $\widetilde{\mathcal{E}}_\star[\rho^\eps,u^\eps](t)$ 
contains the term $\frac{1}{2} \mu(\rho)^2 \rho_x^2/\rho^3$, which is now a contribution of the viscosity and, again,
arises with a favorable sign. This effective energy was first used by Bresch and Desjardin for the Navier-Stokes system \cite{BreschDesjardins}.

\vskip.15cm 

{\bf 2. Global-in-time finite energy solutions to augmented Euler systems.}  
Our first task is to establish the existence of global-in-time solutions to the augmented system under consideration, when the initial data have the integrability and regularity properties 
implied by the finite energy condition, which, for the NSK system, takes the form \eqref{sittelle}, only. 
Whenever the cavitation phenomena can be avoided and the mass density $\rho$ remains bounded away from zero,
it is straightforward to define a notion with finite energy weak solutions in the sense of distributions. Yet, in order to establish the existence of weak solutions,
an important structure conditions on the augmented terms is required: 
while the physical energy is naturally dissipative, a condition arises for this property to hold for the {\sl effective energy}
and, in this paper, for the NSK model, we introduce the notion of {\sl strong coercivity,} as will 
presented and investigated in Section~\ref{sec:2}, below. 

However, cavitation usually occurs in solutions to augmented models when very general constitutive laws are considered. For instance, in the NSK model, 
this is the case when the viscosity and the capillarity are too``weak'' near the vacuum; see the condition ~\eqref{behave}, above.
To handle the cavitation phenomena in augmented models, we need a notion of {\sl weak solution with cavitation}.   
For the NSK model, this issue is discussed in the second part of Section~\ref{sec:3}, below. 

\vskip.15cm 

{\bf 3. Higher-integrability property of the pressure.}
Next, in order to analyze the singular limit ($\eps \to 0$, say) when the augmented model formally converges to the original Euler system, 
we assume that the physical and effective energies of the augmented model are {\sl uniformly} bounded with respect to the parameter $\eps$. 
Our first task is to derive several additional {\sl higher-integrability properties} of the solutions to the augmented system, which 
allow us to get a better control on the solutions. Importantly, the bounds should be {\sl uniform} as $\eps \to 0$. 
Standard parabolic-type or dispersive-type bound simply blow-up when $\eps \to 0$, and from now on
we must exhibit additional structure from the Euler system.   

Our first technique extends an argument in De~Lellis, Otto, and Westdickenberg \cite{DOW} 
(for scalar conservation laws) and LeFloch and Westdickenberg \cite{LW} (for the Euler system), and 
combines the conservation laws for the mass and momentum, as follows. For any system of the form  
\bel{eq:102-two} 
\aligned
\rho^\eps_t + (\rho^\eps u^\eps)_x =& \, X^\eps[\rho^\eps, u^\eps]_x,
\\
(\rho^\eps u^\eps)_t + (\rho^\eps (u^\eps)^2 + p(\rho^\eps) \big)_x =& \, Y^\eps[\rho^\eps,u^\eps]_x, 
\endaligned 
\ee
with augmented terms denoted by $X^\eps[\rho^\eps, u^\eps]$ and $Y^\eps[\rho^\eps, u^\eps]$, we can 
introduce a function $h^\eps$ by setting 
\be
h^\eps_x := \rho^\eps, \qquad \qquad h^\eps_t := - \rho^\eps u^\eps + X^\eps[\rho^\eps, u^\eps], 
\ee
and then write 
\be
(\rho^\eps u^\eps h^\eps)_t + \big( \rho^\eps (u^\eps)^2 h^\eps + p(\rho^\eps) h^\eps \big)_x 
= \rho^\eps p(\rho^\eps) + \rho^\eps u^\eps X^\eps[\rho^\eps, u^\eps] + h^\eps Y^\eps[\rho^\eps, u^\eps]. 
\ee
This identity is used as follows;  cf.~Section~\ref{hipt}, below, for the NSK system. 
By multiplying this identity by a positive test-function $\theta=\theta(t,x)$ and after a suitable integration argument, 
we obtain the uniform estimate on the spacetime integral 
\be
\iint \Big(  \rho^\eps p(\rho^\eps) + \rho^\eps u^\eps X^\eps[\rho^\eps, u^\eps] + h^\eps Y^\eps[\rho^\eps, u^\eps]
\Big) \, \theta \, dtdx. 
\ee

At this juncture, our main observation 
is that the main term $\rho^\eps p(\rho^\eps) \theta$ is non-negative, and this approach eventually leads us to the spacetime estimate
\bel{eq:19}
\rho^\eps p(\rho^\eps) \in L^1_\loc, 
\ee
while the contributions from the augmented terms turn out to be controlable by the energy-type estimates already established. For instance, for the Navier-Stokes model which contains the viscosity term $Y^\eps[\rho^\eps, u^\eps] = \mu(\rho^\eps) u^\eps_x$, 
we check below that a mild condition on the growth of the viscosity guarantees that the $L^1$ average of $Y^\eps[\rho^\eps, u^\eps]$ is controled by the initial (physical and effective) energy. 
Furthermore, as observed by LeFloch and Westdickenberg \cite{LW}, this argument applies in radial symmetry and leads to a uniform estimate valid even at the center of symmetry. 

\vskip.15cm 

{\bf 4. Higher-integrability property of the velocity.}
A better integrability property for the fluid velocity must be derived next an this is achieved with suitably chosen mathematical entropies of the Euler system, 
following an idea in~Lions et al.~\cite{LPT}. 
Recall that the family of weak\footnote{That is, entropies vanishing on the vacuum.} entropy pairs is generated
by an entropy kernel, denoted below by $\chi =\chi(\rho,u;v)$, and an 
entropy flux kernel, denoted by $\sigma=\sigma(\rho,u;v)$. Specifically, for 
any continuous function $\psi=\psi(v)$ (with subquadratic growth, say) 
we can introduce  
\bel{eq:4200-2}
\eta^\psi(\rho,u) := \int_\RR \chi(\rho,u;v) \, \psi(v) \, dv, 
\quad 
\qquad q^\psi(\rho,u) := \int_\RR \sigma(\rho,u;v) \psi(v) \,dv
\ee
and, for any smooth solution to an augmented Euler system, derive additional balance laws of the form   
\bel{eq:20}
\del_t \eta^\psi(\rho^\eps,u^\eps) + \del_x  q^\psi(\rho^\eps,u^\eps) = Z^\eps[\rho^\eps, u^\eps]. 
\ee
Here, the right-hand side $Z^\eps[\rho^\eps, u^\eps]$ vanishes for smooth solutions to the Euler system
and in addition, for solutions to the augmented model, can also be controled by the energy-type estimates already established. 

The second higher-integrability estimate is now obtained, by  integration of \eqref{eq:20}, and by observing that we control the entropy flux  
\bel{eq:vel1}
\sup_{x} \int_0^T q^\psi(\rho^\eps,u^\eps)(t,x) \, dt 
\ee
in terms of the total entropy and a spacetime contribution in $Z^\eps[\rho^\eps, u^\eps]$, both latter terms being already controled by the existing estimates. 
Here, our main observation 
is that, for the family of functions $\psi = v |v|$, the inequality \eqref{eq:20}
has a {\sl non-negative entropy flux} $q^\psi(\rho,u)$. The flux in  \eqref{eq:vel1} grows typically like $|u|^{3}$ in terms of the velocity. 

\vskip.15cm 

{\bf 5. Young measures for finite energy weak solutions.}
Equipped with the estimates \eqref{eq:19} and \eqref{eq:vel1}
we are then in a position to introduce a Young measure, say $\nu: \RR_+ \times \RR \to \text{Prob}(\RR_+ \times \RR)$, in order 
to represent all weak limits of expressions like $f(\rho^\eps, u^\eps)$.
Following \cite{LW}, we rely first on the higher-integrability estimate \eqref{eq:19} for the mass density 
and establish the weak convergence 
\bel{eq:nu1}
f(\rho^\eps, u^\eps) \to\la \nu, f \ra := \iint_{\RR_+ \times \RR} f(\rho,u) \, d\nu 
\ee
for all continuous function $f=f(\rho,u)$ satisfying the growth condition $|f(\rho, u)| \lesssim f_0(\rho) \, \rho p(\rho)$ with $\lim_{\rho \to + \infty} f_0(\rho) = 0$. Next, we take into account the higher-integrability estimate  \eqref{eq:vel1} for the velocity
and we check that we can allow a velocity behavior of the form $f_1(u) \, |u|^{3}$ with $\lim_{|u| \to + \infty} f_1(u) = 0$. 

At this juncture, we point out that the arguments above apply to the Cauchy problem posed on the real line, but need some adaptation to apply to the torus. Here,  we can also rely on 
a property of {\sl propagation of equi-integrability for the velocity,} first proposed in \cite{LW}.  
Here, the basic strategy is to integrate the entropy balance law \eqref{eq:20} with, in \eqref{eq:4200-2}, functions $\psi$ with suitably chosen support in the velocity variable. 

\vskip.15cm 

{\bf 6. Reduction with finite energy Young measures for real compressible fluids.} Our next task is to derive and analyze 
Tartar's commutation relation \cite{Tartar1}
 satisfied by the Young measure $\nu$ for every pair of mathematical entropies, that is, for all 
$\psi_1, \psi_2$ we rely on the div-curl lemma and establish that 
\bel{eq:30} 
\la \eta^{\psi_1}  q^{\psi_2} -  q^{\psi_1} \eta^{\psi_2} \ra 
= 
\la \eta^{\psi_1} \ra \la q^{\psi_2} \ra -  \la q^{\psi_1} \ra \la \eta^{\psi_2} \ra  
\ee 
at almost every point $(t,x)$. The higher-integrability properties above are essential in this derivation, in order to allow all functions 
$\psi_1, \psi_2$ with subquadratic growth at infinity. 

At this juncture, a major difficulty is to deduce from \eqref{eq:30} 
that $\nu$ reduces to a Dirac mass at each point $(t,x)$, at least away from the vacuum, which is equivalent to 
the strong convergence of $\rho^\eps$ and $\rho^\eps u^\eps$.
This is done by exhibiting some {\sl unbalance of regularity} between the two sides of \eqref{eq:30}. 
For polytropic perfect fluids, this was done in the references cited above and generalized 
in \cite{LW} to possibly unbounded Young measures with finite energy. 
The generalization to real fluids is presented in the follow-up paper \cite{GL2}.

\vskip.15cm 

{\bf 7. Global-in-time finite energy solutions to the Euler system.}
The above steps have thus allowed us to fully validate the passage to the limit $\eps \to 0$. 
The relevant notion of a finite energy solutions to the Euler equations, first introduced in \cite{LW}, yields that, 
in particular, such a solution $(\rho, u)$ satisfies the bound  
\bel{eq:finiteE}
\Escr[\rho(t),u(t)] \, dx < + \infty, \qquad t \geq 0.
\ee
In the present work where we take physical viscosity as well as capillarity terms into acount, the entropy inequalities need not hold, however.  

\vskip.15cm 

{\bf 8. Subcritical and critical scalings.}
Our theory covers the regime where the capillarity is dominated by the viscosity, in the sense of the tame condition \eqref{defTC}.
In the subcritical scaling
\be
\frac{\delta(\eps)}{\epsilon^2} \overset{ \epsilon \rightarrow 0 }{\longrightarrow} 0,
\ee
the capillarity terms are ``negligible'' in the limit and it is expected that the solutions we obtained in Corollary~\ref{coro-1} 
satisfy all entropy inequalities (with sub-quadratic growth in the velocity variable). This property can be checked for traveling wave solutions, at least, along the lines of \cite{BL1,MP}.  
On the other hand, the most interesting regime from the mathematical and physical standpoints, arises in the {\sl critical scaling,}
when the diffusive and dispersive effects within the augmented Euler model are ``kept in balance'', in the sense that  
\be
\delta(\eps) = \alpha \, \eps^2 \qquad \text{ ($\alpha$ fixed)}. 
\ee
Then, dispersive terms generate genuine oscillations which (in the limit $\eps \to 0$) drive the effective dynamics of ``dispersive shock waves'', so that 
a different selection mechanism may be observed and shock waves may fail to satisfy standard entropy conditions \cite{BL1,BL5,JosephLeFloch}. 
However, in the present paper, since the pressure function is assumed to satisfy the genuine nonlinearity condition \eqref{GNLc}, we again conjecture that the entropy inequalities are satisfied by the solutions constructed in Corollary~\ref{coro-1}. Again, this property can be checked for traveling wave
solutions, at least.

The rest of this paper is organized as follows. In Section~\ref{sec:2}, we consider the Navier--Stokes--Korteweg system,
discuss basic algebraic properties, and introduce our strong coercivity condition. 
In Section~\ref{sec:3}, we establish an existence theory for the NSK system by constructing weak solutions when 
the initial data have finite energy and the viscosity and capillarity functions satisfy certain mild conditions.
Next, in Section~\ref{sec:4}, we establish our two higher--order integrability properties for the Navier-Stokes-Korteweg system 
and we conclude with the strong convergence of weak solutions to the NSK system
toward weak solutions to the Euler as the viscosity and capillarity tend to zero.


\section{Conservation laws and the strong coercivity condition} 
\label{sec:2}

\subsection{Derivation in Lagrangian coordinates}
\label{sec:LC}

The Navier-Stokes-Korteweg system is derived (in mass Lagrangian coordinates) as follows (cf.~\cite{GG} for details). 
We denote by $(t,y) \mapsto \chi(t,y)$ the so-called mass Lagrangian map, defined so that the integral $\int_a^b \chi(0,y) \,dy$
represents the total mass which was initially located in the interval $[a,b]$ and, moreover,  
the mass initially located at some point $y \in \DD$ has moved to $\chi(t,y) \in \DD$ at the time $t \geq 0$. From this map, we define 
the specific volume $v=1/\rho$ (or equivalently the density $\rho$) together with the velocity $v$ by 
$$ 
u := \chi_t, \qquad \quad v := \chi_y.
$$
For the sake of simplicity in the notation, we keep the same notation for constitutive functions expressed in Lagrangian or in Eulerian coordinates. 
We proceed by prescribing an internal energy function of the form $e = e(v, v_y)$, and we postulate that the following action (on a time interval $[0,T]$)
\bel{eq:action}
{\mathcal J}(\chi) 
:= \iint_{[0,T] \times \DD} \Big(e(v, v_y) - u^2 /2 \Big) \, dtdy
= \iint_{[0,T] \times \DD}  \Big(e(\chi_y, \chi_{yy}) - \chi_t^2/ 2 \Big) \, dtdy
\ee
is formally extremal among all such maps $\chi$. It is easy to derive the Euler-Lagrange equation associated this variational problem, 
namely 
\bel{eq:equachi}
\chi_{tt} + \Bigg( - { \del e \over \del \chi_y }(\chi_y, \chi_{yy}) 
          + \Bigg( { \del e \over \del \chi_{yy} }(\chi_y, \chi_{yy})
 \Bigg)_y \, \Bigg)_y = 0. 
\ee
Next, by observing that $v_t = \chi_{yt} = u_y$ and introducing the pressure function 
$$
P(v,v_y, v_{yy}) := - { \del e \over \del v }(v,v_y) + \Bigg( { \del e \over \del v_y }(v,v_y) \Bigg)_y,   
$$ 
we deduce that the unknown state variables $u, v$ satisfy the following {\bf Euler-Korteweg system}  
\be
\aligned 
v_t - u_y &= 0,               
\\
u_t + P(v,v_y, v_{yy})_y &= 0. 
\endaligned 
\ee
This system includes the effects of the propagation of waves in the fluid described by the pressure function $P$
as well as the effects of the capillarity which is modeled by the internal energy function $e$.

In addition, by prescribing a viscosity function $\nu=\nu(v)$, we arrive at the {\bf Navier--Stokes--Korteweg model} in mass Lagrangian coordinates 
\bel{eq:NSK-Lagrange0} 
\aligned 
v_t - u_y & = 0,
\\
u_t + P(v,v_y, v_{yy})_y & = (\nu(v) \, u_y )_y. 
\endaligned 
\ee
Observe that the local energy $E(v,u,v_y) = e(v,v_y) + u^2/2$ satisfies the additional conservation law 
\bel{eq:NSK-Lagrange1} 
E(v,u,v_y)_t + \bigl(P(v,v_y, v_{yy}) \, u\bigr)_y 
   =  \Big( u_y \, { \del e \over \del v_y }(v,v_y) \Big)_y 
      + \bigl( \nu(v) \, u \, u_y \bigr)_y - \nu(v) \, u_y^2. 
\ee 

It remains to comment about the internal energy function. 
A standard choice made in physics (in phase dynamics, in particular) is a quadratic dependency of $e$ with respect to $v_y$, that is, 
\bel{eq:e}
e(v,v_y) = e(v) + \lam(v) \, {v_y^2 \over 2},
\ee 
where $\lam=\lam(v)$ is refered to as the capillarity coefficient.  Observe that linear terms cannot arise, due to
the invariance of the physical laws by the transformation $y \mapsto -y$. 
Consequently, the pressure $P$ splits into a function of $v$ and a capillarity term, as follows: 
$$ 
p(v) = - e' (v), 
\qquad \qquad
P(v,v_y, v_{yy}) =  p(v) - \lam'(v) \, {v_y^2 \over 2} + (\lam(v) \, v_y )_y. 
$$ 
Hence, for the constitutive law \eqref{eq:e}, the Navier--Stokes--Korteweg system takes the form: 
\bel{eq:NSKfinal12}
\aligned 
& v_t -  u_y = 0,  
\\
& u_t + p(v)_y 
      =           \bigl(\nu(v) \, u_y \bigr)_y, 
+\Big( \lam'(v) \, {v_y^2 \over 2} - \bigl(\lam(v) \, v_y \bigr)_y \Big)_y 
\endaligned 
\ee
while the associated energy balance equation reads 
\bel{eq:NSKfinal3}
\aligned 
& \Bigg(e(v) + {u^2 \over 2} + \lam(v) \, {v_y^2 \over 2}\Bigg)_t 
  + \bigl(p(v) \, u \bigr)_y 
\\
& = 
        \bigl( \nu(v) \, u \, u_y \bigr)_y  - \nu(v) \, u_y^2 
+
\Bigg(u \, \Big( {\lam'(v)\over 2} \, v_y^2  
         - \bigl(\lam(v) \, v_y \bigr)_y \Big)
         + u_y \, \lam(v)\, v_y \Bigg)_y.
\endaligned 
\ee 
The Lagrangian-Eulerian transformation $t \mapsto y(t,x)$, defined by $y_t = -\rho u$ and $y_x = \rho$, allows us to derive the Eulerian formulation
\eqref{eq:101} from the Lagrangian formulation above. It is easy to check that the relation
\be
v \, \nu(v) = \mu(1/v), 
\qquad 
v^5 \lambda(v) := \kappa(1/v)
\ee
holds between the viscosity and capillarity coefficients in Lagrangian and Eulerian coordinates.


\subsection{Local balance laws} 
\label{sec:Section23}

\subsubsection*{Conservation law for the mass} 

We now record several elementary but fundamental properties of the Navier--Stokes--Korteweg system. Observe that the first equation in \eqref{eq:101}, that is
\bel{eq:400}
\rho_t + (\rho u)_x = 0,
\ee
simply expresses the local conservation of the mass density.

\subsubsection*{Conservation law for the momentum} The second equation in~(\ref{eq:101}), i.e. 
\bel{eq:401} 
(\rho u)_t + \Big(\rho u^2 + p(\rho)  
- \mu(\rho) u_x  
- \rho \kappa(\rho) \rho_{xx} - \frac{1}{2} (\rho \kappa'(\rho) - \kappa(\rho)) \rho_x^2 \Big)_x 
=0 
\ee
expresses the local conservation of the momentum $\rho u$ and, in view of the mass equation above, 
has the equivalent form 
\bel{eq:402}
u_t + u u_x + \frac{p'(\rho)}{\rho} \, \rho_x 
= \frac{1}{\rho} \, \big( \mu(\rho) u_x \big)_x + \Big( \kappa(\rho) \rho_{xx} + \frac{1}{2} \kappa'(\rho) (\rho_x)^2 \Big)_x.
\ee
Observe that all the terms above can be given a conservative form (for instance, for the pressure term by introducing the function $k(\rho):=\int^\rho \rho^{-1} p'(\rho) d\rho$), except the viscosity term which contains an extra  factor $1/\rho$. Hence, in the limit of vanishing viscosity and capillarity and for {\sl weak} solution, the expression $u_t + (u^2/2)_x + (k(\rho))_x$ should {\sl not} be expected to vanish in the sense of distributions.

\subsubsection*{Balance law for the energy}
\label{sec:energy}

We already introduced the local energy $E= E[\rho,u]$ of the NSK system by 
$$
E = \frac{1}{2} \rho u^2 + \rho e(\rho) + \frac{1}{2} \kappa(\rho) \rho_x^2
$$
with $e'(\rho) = \frac{p(\rho)}{\rho^2}$. Let us now define the {\bf local internal forces} $F= F[\rho,u]$
by  
$$
F := - p(\rho) + \mu(\rho) u_x + \rho \kappa(\rho)\rho_{xx} + \frac{1}{2} \big( \rho \kappa'(\rho) - \kappa(\rho) \big) \, (\rho_x)^2
$$
and the {\bf local energy dissipation} $D= D[\rho,u]$ as  
$$
D := \mu(\rho) u_x^2. 
$$
Then, the local energy balance law reads 
\be
\label{eq:23} 
E_t + \Big( u E - u F + \kappa(\rho) \rho \rho_x u_x \Big)_x = -D \leq 0.
\ee
Recall that Dunn and Serrin in~\cite{DunnSerrin} refer the term $\kappa(\rho) \rho \rho_x u_x$, above, as the {\bf interstitial work.}  

\subsubsection*{Effective velocity}

Given any $\omega \in \RR$, 
we propose here to define the {\bf $\omega$-effective velocity} 
$$
\ut^\omega = u + \omega \, \frac{\mu(\rho)}{\rho^2} \rho_x.
$$
By taking 
$\omega=1$, we recover an expression introduced first in Bresch, Desjardins and Lin~\cite{BreschDesjardins} and used by Mellet and Vasseur~\cite{MelletVasseur}, while J\"ungel~\cite{Jungel} used $\omega=1/2$.

\subsubsection*{$\omega$-Effective NSK system}

We observe here (cf.~the derivation at the end of this section that)  
the pair $(\rho,\ut^\omega)$ solves a new system which has essentially the same 
algebraic structure to the one of the NSK system, that is,  
\bel{eq:27}
\aligned
\rho_t + (\rho \ut^\omega)_x & =  \left( \omega \frac{\mu(\rho)}{\rho} \rho_x \right|_x, 
\\
(\rho \,\ut^\omega)_t + \big( \rho (\ut^\omega)^2 + p(\rho) \big)_x 
& = \, M^\omega[\rho,u]_x 
+ K^\omega[\rho]_x,  
\\
M^\omega[\rho,u] :=& {\mu(\rho) \over \rho} \, \Big( 
(1-\omega) \, \rho \, \ut^\omega_x 
+ \omega  \, \rho_x  \, \ut^\omega\Big), 
\\
K^\omega[\rho] :=&\,\rho \, \kappat^\omega (\rho) \rho_{xx} 
+ \frac{1}{2} \, \Big (\rho {\kappat^\omega}'(\rho) - \kappat^\omega(\rho)   \Big) \, \rho_x^2,   
\endaligned
\ee
which we propose to refer to as the {\bf $\omega$-effective Navier--Stokes--Korteweg system} and 
in which we have introduced the following {\bf $\omega$-effective capillarity} coefficient 
\bel{eq:28}
\kappat^\omega := \kappa - \omega (1-\omega) \frac{\mu^2}{\rho^3}.
\ee
Observe that the structure of the capillarity terms is exactly {\sl preserved,} while the viscosity is ``split'' between the mass and momentum equations, and 
the most important outcome of this transformation is that the mass equation has now gained a diffusion term. 

The relevant range for $\omega$ appears to be the interval $[0,1]$ for, otherwise, the system is not parabolic; moreover, when $\omega 
\in (0,1)$, this effective system is uniformly parabolic. It is natural also to choose $\omega$ so that $\kappat^\omega \geq 0$ ---which 
always holds if $\omega$ equals $0$ or $1$, or if $\omega$ is sufficiently close to $0$ or $1$ and the reverse inequality in \eqref{ABC} is assumed.  
In the present paper, the choice $\omega = 1$ will play a central role , and we set for the rest of this paper
\bel{eq:choicelambda}
\ut := \ut^1, \qquad \quad
\kappat := \kappat^1. 
\ee


\subsubsection*{Balance law for the $\omega$-effective energy}

For every  $\omega \in [0,1]$, we introduce the {\bf local effective energy} 
$$
\Et^\omega=\Et^\omega[\rho,u] := \frac{1}{2} \rho (\ut^\omega)^2 + \rho e(\rho) + \frac{1}{2} \kappat^\omega(\rho) \rho_x^2, 
$$
and the {\bf local effective energy dissipation} 
\bel{eq:eed}
\aligned
\Dt^\omega = \Dt^\omega[\rho,\ut]
 := \, 
& (1- \omega) \, \mu(\rho) (\ut^\omega_x)^2 
+ \omega \, 
\frac{\mu(\rho)}{\rho^2} p'(\rho) (\rho_x)^2  + \omega \, \frac{\mu(\rho)}{\rho} \kappa(\rho) 
\Big( 
(\rho_{xx})^2 + \zeta(\rho) \rho_x^4 \Big),  
\endaligned
\ee 
with $\zeta=\zeta(\rho)$ being given by 
(the rather involved expression below arising first in the calculations) 
\bel{eq:zeta} 
\aligned
\zeta :=&  {\rho \over \mu \kappa} \Bigg(
\left(\frac{\mu}{\rho^2}\right)' {1 \over 2} \big(\rho \kappa'(\rho) - \kappa(\rho) \big) 
 -  {1 \over 3}
\left( 
\left(\frac{\mu(\rho)}{\rho^2}\right)' \rho \kappa(\rho) \right)'  
 -  {1 \over 3} \left( \frac{\mu(\rho)}{\rho^2} {1 \over 2}  \big( \rho \kappa'(\rho) - \kappa(\rho) \big) \right)' \Bigg) 
\\
= & - {1 \over 3} \left( {1 \over 2} {\kappa'' \over \kappa}  + \left(\frac{\mu}{\rho}\right)'' {\rho \over \mu}\right). 
\endaligned
\ee 
By a tedious calculation, we can check that 
\bel{effectiveenergy}
\aligned
\Et^\omega_t 
& + \Big( 
\ut^\omega\, \Et^\omega - \ut^\omega \Ft^\omega + \omega \, \Gt^\omega\Big)_x  
=  - \Dt^\omega, 
\endaligned
\ee
in which the {\bf effective internal forces} $\Ft^\omega= \Ft^\omega[\rho,\ut]$ are  defined by 
$$
\Ft^\omega
 := - p(\rho) + (1-\omega) \, \mu(\rho) \ut_x + \rho \kappat^\omega(\rho)\rho_{xx} + \frac{1}{2} \big( \rho {\kappat^\omega}'(\rho) - \kappat^\omega(\rho) \big) \, (\rho_x)^2 
$$
and the {\bf $\omega$-effective  interstitial work}  by 
\be
\aligned
\Gt^\omega :=  {\mu \over \rho} \big(( \rho e)' + \ut^2/2 \big) \rho_x 
 + \kappat^\omega(\rho) \rho \rho_x \ut_x 
           - \frac{\mu(\rho)}{\rho^2} \rho \kappa(\rho)   \rho_x \rho_{xx}  
+ 
  {1 \over 3}  \, 
\left( 
\left( {\mu(\rho) \over \rho} \right)' \kappa(\rho)  
      - {\mu(\rho) \over \rho} \kappa'(\rho) \right) 
 \rho_x^3. 
\endaligned
\ee  

At this juncture, we observe that, in \eqref{eq:zeta}, the function $\zeta$ need not be non-negative, so that $\Dt^\omega[\rho,\ut]$ need not be non-negative ---in contrast with the physical dissipation $D[\rho,u]$ which is always non-negative.  Our ``strong coercivity'' condition defined below will ensure that $\Dt^\omega[\rho,\ut]$ is {\sl non-negative in average.}  
Finally, when $\omega$ is chosen to be unit, we shorten our notation and write 
\be
\Et := \Et^1, \qquad \Dt := \Dt^1, \qquad \Ft := \Ft^1, \qquad \Gt := \Gt^1.
\ee


\subsubsection*{Derivation of an effective NSK system}

The derivation of \eqref{eq:27} from \eqref{eq:101}, in principle, is an elementary matter but may be particularly tedious
if a brute force calculation is attempted: 
we propose here a rather natural derivation of this important identity. First of all, the treatment of the mass equation is trivial from the definition 
of the effective velocity, while to handle the momentum equation 
we substract \eqref{eq:101} from \eqref{eq:27} and, therefore, we solely need to establish the identity 
$$
\aligned
& (\omega \rho \, m_x)_t 
+ \Big( 2 \omega \rho u  m_x + \omega^2 \rho m_x^2         \Big)_x 
\\ 
& = \, \Big( \omega m_{xx} \mu
-\omega \, \mu \,  (u + \omega  m_x)_x
+ \omega \, \rho m'\rho_x \, (u + \omega  m_x) \Big)_x 
- \Big( \omega (1 - \omega) \,\rho \,  n \, \rho_{xx} 
+ \omega (1 - \omega)  \frac{1}{2} \, \Big (\rho \, n' - n \Big) \rho_x^2 \Big)_x. 
\endaligned
$$
Here, we have introduced the notation $m=m(\rho)$ and 
$\frac{\mu}{\rho^2} \rho_x =: m_x$, as well as $n:= \mu^2 / \rho^3$, and we have been able to cancel out some capillarity-related terms. 
By defining $\omega\rho m' =: q'$ and writing 
$$
(\omega \rho \, m_x)_t = q_{xt} = q_{tx} 
= - \big( q' (\rho u)_x \big) _x
= - \big( \omega \rho m' (\rho u)_x \big)_x, 
$$
and then observing that $\omega$ should be treated as a parameter, we see that the above identity splits into two distinct identities 
$$
\aligned
\big( - \omega \rho m' (\rho u)_x \big) _x
+ \Big( 2  \omega \rho u  m_x \Big)_x  
=&  \, \Big(  \omega \Big( 
m_{xx} \mu
- \mu \,  u_x
+ \rho m'\rho_x \, u \Big) 
\Big)_x 
-  \Big(  \omega \Big(
\,\rho \,  n \, \rho_{xx} 
+ \frac{1}{2} \, \Big (\rho \, n' - n \Big) \rho_x^2 \Big)
\Big)_x,
\\
\Big(  \omega^2 \rho m_x^2         \Big)_x 
=& \, \Big(  \omega^2 \Big(
- \mu \,  m_{xx} + \rho m'\rho_x \, m_x \Big)
\Big)_x 
+ \Big( \omega^2 \Big(
\,\rho \,  n \, \rho_{xx} 
+ \frac{1}{2} \, \Big (\rho \, n' - n \Big) \rho_x^2 \Big)
\Big)_x. 
\endaligned
$$
Finally, after removing one derivative in $x$ in each term and getting rid of $\omega$ while 
splitting the first equation into terms that depend or are independent of $u$, 
it is sufficient to check the following three identities: 
$$
\aligned
- q' (\rho u)_x + 2 \rho u  m_x 
= & 
- \mu \,  u_x
+ \rho m'\rho_x \, u  
\\
0
=&  \, m_{xx} \mu 
-  \rho \,  n \, \rho_{xx} 
- \frac{1}{2} \, \big (\rho \, n' - n \big) \rho_x^2,
\\
 \rho m_x^2 
=& \, - \mu \,  m_{xx} + \rho m'\rho_x \, m_x 
+ \,\rho \,  n \, \rho_{xx} 
+ \frac{1}{2} \, \Big (\rho \, n' - n \Big) \rho_x^2, 
\endaligned
$$
which indeed, in view of our definition of $m, n,q$, follows easily for {\rm arbitrary} functions $\rho, u$. 


\subsection{The strong coercivity condition}

We are now in a position to state several fundamental uniform estimates and state our main assumption relating the behavior of 
the nonlinear viscosity and capillarity coefficients. We are interested in the mass and energy equations which do provide us with 
non-negative functionals (possibly after a certain normalization), while the momentum has an indefinite sign and cannot be used to derive an uniform estimate.  

\subsubsection*{The Cauchy problem on the torus} 
 
Integrating the mass equation~\eqref{eq:400} in space and time gives us 
\be
\Mscr[\rho](t)  := \int_\TT \rho(t,x) \, dx = \int_\TT \rho(0,x) \, dx = \Mscr[\rho_0] =: \Mscr_0, 
\qquad t \geq 0,
\ee
which is a constant independent of time.  On the other hand, by defining the {\bf total energy} and {\bf total energy dissipation} by
$$
\Escr[\rho,u](t) := \int_\Tbb E(t,x) \, dx, \qquad
\qquad \mathscr{D}[\rho,u](t) = \int_\Tbb D(t,x)\, dx,
$$
we see that the energy equation~\eqref{eq:23} yields 
\be
\label{cormoran1}
\Escr[\rho,u](t) + \int_0^t \mathscr{D}[\rho,u](s) \, ds 
= \Escr[\rho_0,u_0] =: \Escr_0,
\ee
in which $D[\rho,u]$ and thus $\mathscr{D}[\rho,u]$ are {\sl non-negative.} 

Next, by defining the {\bf total effective energy} and {\bf total effective energy dissipation} by
$$
\Escrt[\rho,u](t) := \int_\Tbb \widetilde{E}(t,x) \, dx, 
\quad
 \qquad \widetilde{\mathscr{D}}[\rho,u](t) = \int_\Tbb \widetilde{D}(t,x)\, dx,
$$
in which, by \eqref{eq:eed},  
\bel{eq:eed-omega1}
\aligned
\Dt
 = \Dt[\rho,\ut] :=&  
\frac{\mu(\rho)}{\rho^2} p'(\rho) \rho_x^2  + \frac{\mu(\rho)}{\rho} \kappa(\rho) \Big( 
\rho_{xx}^2 +\zeta(\rho)\rho_x^4 \Big),  
\endaligned
\ee
we obtain 
\bel{cormoran2}
\Escrt[\rho,u](t) + \int_0^t \widetilde{\mathscr{D}}[\rho,u](s)\,ds 
=
\Escrt[\rho_0,u_0] =: \Escrt_0.
\ee 

Observe that, contrary to what happens with the physical energy, the term $\widetilde{\mathscr{D}}[\rho,u]$ in the effective energy balance law need not be non-negative.
The identity \eqref{cormoran2} 
is going to play a central role in our theory, as it provides us with an essential control of the second-order derivative of the mass density, 
that is, $\rho_{xx}$ ---{\sl provided} we can ensure that the effective dissipation remains uniformly positive. 
More precisely, in this paper we propose the following {\bf strong coercivity} (SC) condition: 
for some constant $C_0>0$ and any smooth function $\rho= \rho(x) >0$, 
\bel{eq:203}
(SC) \qquad \int_\Tbb \widetilde{D}[\rho]\, dx 
\geq C_0 
\int_\Tbb \left( \rho_{xx}^2 + {\rho_x^4 \over \rho^2} \right) \, {\mu(\rho)\kappa(\rho) \over \rho} \, dx.
\ee
Clearly, it would be sufficient to assume, for instance that the function $\zeta$ is positive and bounded below by 
$\rho^{-2}$ (up to a multiplicative constant), but in fact it is not necessary that $\zeta$ be positive. 
We refer to Section~\ref{sec:SCC}, below, for the derivation of sufficient conditions on $\mu$ and $\kappa$ guaranteeing that this coercivity inequality holds. 
For instance, it does hold when $\mu(\rho) = \rho$ and $\kappa(\rho) = \rho^\alpha$ with $\alpha \in (-2, 1)$. Observe that the 
term $\frac{\mu(\rho)}{\rho^2} p'(\rho) (\rho_x)^2$ has a different scaling in comparison to the terms $\rho_x^4$ and $\rho_{xx}^2$ and, although
it has a favorable sign, we cannot take advantage of it when proving \eqref{eq:203}. 


\subsubsection*{The Cauchy problem on the real line}

To deal with the problem posed on the real line, we need to introduce a renormalization based on the limit $\rho_\star$ at infinity and, as already stated in the introduction, 
we need to introduce
$$
\aligned
E_\star[\rho,u] 
:=& 
\frac{1}{2} \rho u^2 + \rho e(\rho) - \rho_\star e(\rho_\star) - (\rho e(\rho))'(\rho_\star)(\rho-\rho_\star) + \frac{1}{2} \kappa(\rho) \rho_x^2,
 \\
\Et_\star[\rho,u] :=&
 \frac{1}{2} \rho (\ut)^2 + \rho e(\rho) - \rho_\star e(\rho_\star) - (\rho e(\rho))'(\rho_\star)(\rho-\rho_\star) + \frac{1}{2} \kappa(\rho) \rho_x^2.
\endaligned 
$$
Thanks to the hyperbolicity condition~\eqref{hyperc}, the function $\rho \mapsto \rho e(\rho) - \rho_\star e(\rho_\star) - (\rho e(\rho))'(\rho_\star)(\rho-\rho_\star)$ is non-negative and convex, 
so that $E_\star[\rho,u]$ and $\Et_\star[\rho,u]$ are non-negative. 

Upon defining
$$
\Escr_\star[\rho,u](t) := \int_\RR E_\star[\rho,u] \, dx,
 \quad
\qquad \Escrt_\star[\rho,u](t) := \int_\RR \widetilde{E}_\star[\rho,u] \, dx,
$$
the identities~(\ref{cormoran1}) and~(\ref{cormoran2}) remain valid 
 but the integration domain
is changed to $\RR$ and the normalized energies are used (but 
the dissipation terms do not need to be renormalized).

Recall that we use the notation $\Escr_\star$ for the torus as well as for the real line, with the convention that $\rho_\star = 0$ in the former. The strong coercivity condition (SC) associated with the Cauchy problem posed on the real line is stated as follows: 
for some constant $C_0>0$ and any smooth function $\rho= \rho(x) >0$ approaching a fixed constant $\rho_\star \geq 0$ at $\pm \infty$,
\bel{eq:203bis}
(SC) \qquad \int_\RR \widetilde{D}[\rho]\, dx 
\geq C_0 
\int_\RR \left( \rho_{xx}^2 + {\rho_x^4 \over \rho^2} \right) \, {\mu(\rho)\kappa(\rho) \over \rho} \, dx.
\ee
  

\subsection{A nonlinear Sobolev inequality}

In the case that $\mu$ and $\kappa$ are power laws, the following theorem provides the key to understanding the 
strong coercivity condition proposed in the present work. (Related inequalities can be found in Lions and Villani~\cite{LV}.)  

\begin{theorem}[The strong coercivity condition for power laws] 
Consider positive functions $f: \DD \to (0, +\infty)$ defined on the torus or the real line and, more specifically: 
\begin{itemize}

\item If $\DD = \Tbb$, then consider $f$ in $H^2(\Tbb)$.

\item If $\DD = \RR$, then the functions $f$ approach a constant $\rho_\star>0$ at $\pm \infty$ and $(f-\rho_\star) \in H^2(\RR)$.
\end{itemize}
Then, the inequality 
\be \label{hummingbird0}
\int_{\DD} f^a (f_{xx})^2 \, dx \geq \left(\frac{a-1}{3}\right)^2 \int_{\DD} f^{a-2} (f_x)^4 \, dx
\ee
holds for any $a>1$, in which the constant in the right-hand side is optimal. Furthermore, there is no analogous estimate when $a=1$, in the sense that 
if the inequality 
\be
\label{hummingbird}
\int_{\DD} f (f_{xx})^2 \, dx \geq c \int_{\DD} f^{-1} (f_x)^4\, dx
\ee
holds for all functions $f$ satisfying the above requirements, then the constant $c \geq 0$ in \eqref{hummingbird} must vanish.    
\end{theorem}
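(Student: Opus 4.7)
The plan is to derive \eqref{hummingbird0} by combining an integration by parts with Cauchy--Schwarz, to prove sharpness by approximating an explicit algebraic extremal profile, and to deduce the $a=1$ statement from sharpness at $a=4$ via the substitution $f=u^2$.

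First, from the identity
\[
\bigl(f^{a-1}f_x^3\bigr)_x = (a-1)\,f^{a-2}f_x^4 + 3\,f^{a-1}f_x^2 f_{xx},
\]
integrating over $\DD$ kills the left-hand side --- by periodicity on $\TT$, or on $\RR$ because $(f-\rho_\star)\in H^2(\RR)$ forces $f_x\in H^1(\RR)\hookrightarrow C_0(\RR)$ while $f$ stays bounded --- yielding
\[
\int_\DD f^{a-1}f_x^2 f_{xx}\,dx = -\frac{a-1}{3}\int_\DD f^{a-2}f_x^4\,dx.
\]
Writing the integrand as the product $(f^{a/2}f_{xx})\cdot(f^{(a-2)/2}f_x^2)$ and applying Cauchy--Schwarz then gives
\[
\tfrac{a-1}{3}\int_\DD f^{a-2}f_x^4 \leq \Bigl(\int_\DD f^a f_{xx}^2\Bigr)^{1/2}\Bigl(\int_\DD f^{a-2}f_x^4\Bigr)^{1/2},
\]
which squares to \eqref{hummingbird0}.

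For optimality when $a>1$, equality in the Cauchy--Schwarz step requires $f_{xx} = -\tfrac{a-1}{3}\,f_x^{2}/f$, whose positive solutions are $f_\star(x) = (Cx+D)^{3/(a+2)}$ (a square root when $a=4$). These profiles are inadmissible, but on them both integrands collapse to a multiple of $(Cx+D)^{-1}$, with pointwise ratio exactly $((a-1)/3)^2$ and logarithmically divergent bulk. I would construct an admissible approximation by using $f_\star$ on a large subinterval and gluing smoothly either to the asymptotic constant $\rho_\star$ on $\RR$ through a transition layer of length $\ell_R$ large enough (for instance $\ell_R=R^{2}$) that its contribution $O(R^{3}/\ell_R^3)$ is dominated by the $O(\log R)$ bulk, or on $\TT$ through a periodic repetition of $N$ rescaled local copies glued by short transitions, tuning the amplitude so that bulk still beats transition by a $\log N$ factor. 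In either case, the ratio of the two integrals tends to $((a-1)/3)^2$, proving sharpness.

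Finally, for the $a=1$ claim, the substitution $u=\sqrt f$ is a bijection between admissible $f$ (with $\rho_\star>0$ on $\RR$) and admissible $u$ (with $u_\star=\sqrt{\rho_\star}$). Using $f_x=2u u_x$, $f_{xx}=2(u_x^2+uu_{xx})$ and the identity $\int u^3 u_x^2 u_{xx}\,dx=-\int u^2 u_x^4\,dx$ (from integrating $u^3\cdot\tfrac{1}{3}(u_x^3)_x$ by parts), a direct expansion gives
\[
\int_\DD f f_{xx}^2\,dx = 4\int_\DD u^4 u_{xx}^2\,dx - 4\int_\DD u^2 u_x^4\,dx,
\qquad
\int_\DD f^{-1} f_x^4\,dx = 16\int_\DD u^2 u_x^4\,dx.
\]
Hence \eqref{hummingbird} with constant $c$ is equivalent to $\int u^4 u_{xx}^2\geq (1+4c)\int u^2 u_x^4$ for all admissible $u$, and sharpness of the $a=4$ case forces $1+4c\leq 1$, i.e.\ $c\leq 0$, completing the claim. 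I expect the hardest step to be the sharpness argument: the bulk of the two integrals on $f_\star$ diverges only logarithmically, so on $\RR$ the transition layer has to be stretched well beyond $R$, and on $\TT$ the amplitude of the $N$ copies has to be tuned carefully so that bulk keeps dominating the unavoidable transition defect.
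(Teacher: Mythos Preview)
Your proof is correct and takes a genuinely different logical route from the paper's.

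\textbf{What the paper does.} The paper proceeds in the opposite order: it first establishes the $a=1$ statement directly by exhibiting the explicit family $\zeta_{\alpha,\eps}(x)=(\eps+|x|^2)^{\alpha/2}$ on the torus and letting $\eps\to 0$, then $\alpha\to 1^+$; for this family $\int f^{-1}f_x^4$ blows up like $(\alpha-1)^{-1}$ while $\int f\,f_{xx}^2$ stays bounded, so no $c>0$ survives. Then a single substitution $h=f^{3/(a+2)}$ converts the $a=1$ inequality with constant $c$ into the general-$a$ inequality with constant $\big(\frac{a-1}{3}\big)^2+c\big(\frac{a+2}{3}\big)^2$; since the optimal $c$ at $a=1$ is exactly $0$, both the inequality and its sharpness for general $a$ fall out at once. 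The real line is obtained from the torus by scaling and density.

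\textbf{What you do differently, and the trade-offs.} Your IBP+Cauchy--Schwarz derivation of \eqref{hummingbird0} is more direct and transparent than the paper's substitution route; it also makes the equality case $f_{xx}=-\frac{a-1}{3}f_x^2/f$ visible, which you then exploit. Your reduction of the $a=1$ statement to sharpness at $a=4$ via $u=\sqrt{f}$ is exactly the paper's substitution specialized to $a=4$ (since $f^{3/(a+2)}=f^{1/2}$ there), just run in the other direction. The main cost of your route is the sharpness construction: gluing the algebraic extremal $(Cx+D)^{3/(a+2)}$ to the constant $\rho_\star$ and controlling the transition layer is workable (your $O(R^3/\ell_R^3)$ estimate is right for $a\ge 2$; for $1<a<2$ the exponent is $12/(a+2)>3$, but $\ell_R=R^2$ still kills it), yet the torus version you sketch remains rather vague. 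The paper avoids all gluing by regularizing the singular extremal into a smooth one-parameter family, which is cleaner and transfers immediately to both domains.
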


Interestingly enough, our coercivity inequality enjoys many symmetries: it is invariant under the translation $f \mapsto f(x_0 + \cdot)$, 
as well as the multiplication by a constant 
$f \mapsto \lambda f$ and the dilation $f \mapsto f(\lambda \cdot)$ (this last property making sense if $\DD = \RR$, only).
This inequality also depends on our choice of boundary conditions, and the conclusion of the above theorem becomes false if, for instance, 
$f$ is taken to be any smooth function defined on the interval $[0,1]$ (without imposing periodic boundary conditions) or if
it is a function on $\RR$ admiting distinct limits at $\pm \infty$.
 
\begin{proof} It suffices to consider smooth functions $f$, since the general case follows by a straighforward density argument.

\noindent
{\bf 1. The torus with exponent $a = 1$.} In order to show that there does not exist $c>0$ such that~(\ref{hummingbird}) holds for all smooth, positive 
functions defined on the torus, we argue by contradiction and assume that it does hold for some positive $c$.

Then, identifying the torus with the interval $\left( -\frac{1}{2},\frac{1}{2} \right)$, we introduce the family of functions 
$$
\zeta_{\alpha,\eps}(x) = \left( \eps + |x|^2 \right)^{\alpha/2} \quad \mbox{if $|x| \leq 1/4$}
$$
(defined for $\eps>0$ and $\alpha>1$), which we extend to the torus so that it is smooth, bounded away from zero, and enjoys uniform bounds on its first and second derivatives 
outside of $\left( -\frac{1}{4},\frac{1}{4} \right)$. Then, we write 
$$
\int_{-1/4}^{1/4} \zeta_{\alpha,\eps}^{-1} \left( {\zeta_{\alpha,\eps}}_x \right)^4 \, dx 
= \int_{-1/4}^{1/4} \alpha^4 x^4 (\eps + x^2)^{\frac{3\alpha}{2} - 4} \, dx \overset{\eps \to 0}{\longrightarrow} \int_{-1/4}^{1/4}
\alpha^4 |x|^{3\alpha - 4}\, dx = \frac{2 \alpha^4}{3 \alpha - 3} \left( \frac{1}{4} \right)^{3\alpha - 3}, 
$$
and, on the other hand, again when $\eps \to 0$, 
$$
\int_{-1/4}^{1/4} \zeta_{\alpha,\eps}  \left( {\zeta_{\alpha,\eps}}_{xx} \right)^2 \, dx \to 
\alpha^2 (\alpha-1)^2 \int_{-1/4}^{1/4} |x|^{3\alpha-4}\, dx = \alpha^2 (\alpha-1) \frac{2}{3}  \left( \frac{1}{4} \right)^{3\alpha - 3}.
$$
If~(\ref{hummingbird}) holds, then
$$
c \int_{-1/2}^{1/2} \zeta_{\alpha,\eps}^{-1} \left( \zeta_{\alpha,\eps}^{-1} \right)^4 \, dx 
\leq \int_{-1/2}^{1/2} \zeta_{\alpha,\eps}  \left( {\zeta_{\alpha,\eps}}_{xx} \right)^2 \, dx,
$$
and letting $\eps$ go to zero and using the above gives, for some constant $A$
$$
\alpha^2 (\alpha-1) \frac{2}{3}\left( \frac{1}{4} \right)^{3\alpha - 3} \geq 
c \, \frac{2 \alpha^4}{3 \alpha - 3} \left( \frac{1}{4} \right)^{3\alpha - 3} - A.
$$
Finally, letting $\alpha$ go to $1$ leads to the desired contradiction.

\vskip.15cm 

\noindent
{\bf 2. The torus with general exponent $a \neq 1$.} Fix $a \neq -2$, and $f$ smooth and positive. Setting $h := f^{\frac{3}{a+2}}$, we then observe that the inequality
$$
\int f (f_{xx})^2 \, dx \geq c \int f^{-1} (f_x)^4\, dx
$$ 
is equivalent to
$$
\int h^a (h_{xx})^2 \, dx \geq \left(
 \left( \frac{a-1}{3} \right)^2 + c \left( \frac{a+2}{3} \right)^2 \right) \int_\Tbb h^{a-2} (h_x)^4\, dx.
$$
Therefore,~(\ref{hummingbird0}) follows from Step 1. Finally, the case $a = -2$ can be obtained by a limiting argument from the case $a \neq -2$, 
since the constant does not blow up as $a \to -2$.

\vskip.15cm 

\noindent
{\bf 3. The real line.} The invariance properties of the equation allow to deduce the results for the real line from the results for the torus, at least when $f_x$ is compactly supported. Moreover, an elementary density argument then leads to the desired conclusion.
\end{proof}


\subsection{Application to the strong coercivity condition}
\label{sec:SCC} 

Recall that $\mu$ and $\kappa$ are smooth functions mapping $(0,+\infty)$ to itself, and define
$$
\Dcalh[\rho] := \int_{\DD} \left( \frac{\mu(\rho)}{\rho} \rho_x \right)_x \left( \kappa(\rho) \rho_{xx} + \frac{1}{2} \kappa'(\rho) \rho_x^2 \right) \, dx.
$$
(this corresponds to the second term in $\int \Dt[\rho]\, dx$, the first one being non-negative if $p'(\rho) \geq 0$).
Expanding in the above formula and then integrating by parts, one sees easily that
\be
\label{eagle}
\Dcalh[\rho] = 
\int_{\DD} \left( \rho_{xx}^2 + \zeta(\rho) {\rho_x^4 \over \rho^2} \right) 
 \frac{\mu(\rho)\kappa(\rho)}{\rho} \, dx,  
\ee
with, as was defined in the introduction, 
$$
\zeta
= - {1 \over 3} \left( {1 \over 2} {\kappa'' \over \kappa}  + \left(\frac{\mu}{\rho}\right)'' {\rho \over \mu}\right).  
$$
Recall that the strong coercivity condition (SC) is satisfied if there exists a constant $C_0=C_0(\mu,\kappa)>0$ such that, for all function $\rho$ 
satisfying the 
boundary conditions specified in Section~\ref{sectionNSK},
$$
(SC) \qquad \quad \Dcalh[\rho]
\geq C_0 
\int_{\DD} \left( \rho_{xx}^2 + {\rho_x^4 \over \rho^2} \right) 
 \frac{\mu(\rho)\kappa(\rho)}{\rho} \, dx.
$$
The following theorem describes settings where the above inequality holds. Clearly, it is the case when the coefficient 
of $(\rho_x)^4$ is non-negative, but this may not be a physically realistic condition, so we consider the following broad classes of functions. 

\begin{theorem}[Sufficient conditions ensuring the strong coercivity condition] 
\label{toucan}
\begin{itemize}
\item[(i)] Fix $\mu$, $\kappa$ (positive, smooth functions on $(0,+\infty)$), and $\rho_0>0$. 
Then, there exists $\eps>0$ such that (SC) holds as soon as $|\rho(x)-\rho_0|<\eps$.
\item[(ii)] Assume $\mu(\rho) = \rho^\alpha$ and $\kappa(\rho) = \rho^\beta$. Then, the condition (SC) holds for some $C_0$ if and only if
$$
2 \alpha - 4 < \beta < 2 \alpha -1.
$$
If this condition is not satisfied, the functional $\Dcalh[\cdot]$ is not even positive.

\item[(iii)] Assume $\mu(\rho) = \rho^\alpha$ and $\kappa(\rho) = \rho^\beta$, and that the above inequality is not satisfied.
Then, there exists $c \in (0,1)$ such that the condition (SC) holds for all functions $\rho$ satisfying $|\rho(x)-\rho_0| < c \rho_0$ (for all $x$ and some $\rho_0>0$). 

\item[(iv)] If the function $\zeta$ is positive and there exists $c_0>0$ such that $\zeta(\rho) \geq c_0 \rho^{-2}$, then the condition (SC) holds. 
\end{itemize}
\end{theorem}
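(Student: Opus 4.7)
The proof relies crucially on the nonlinear Sobolev inequality just proved, which we use to absorb the potentially negatively signed $\zeta(\rho)\rho_x^4$-contribution of $\Dcalh[\rho]$ into the manifestly nonnegative $\rho_{xx}^2$-term. I would address the four items in increasing order of difficulty, beginning with the easiest.

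\emph{Item (iv)} is a pointwise bound: under the assumption $\zeta(\rho)\geq c_0/\rho^2$, the integrand of $\Dcalh[\rho]$ pointwise dominates $(\rho_{xx}^2+c_0\rho_x^4/\rho^2)\mu(\rho)\kappa(\rho)/\rho$, so (SC) holds with $C_0=\min(1,c_0)$.

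\emph{Item (ii).} Substituting $\mu(\rho)=\rho^\alpha$ and $\kappa(\rho)=\rho^\beta$ in \eqref{eq:zeta} gives $\zeta(\rho) = -c(\alpha,\beta)/\rho^2$ with
$$
c(\alpha,\beta) := \frac{1}{3}\Bigl(\frac{\beta(\beta-1)}{2}+(\alpha-1)(\alpha-2)\Bigr),
$$
and $\mu\kappa/\rho = \rho^{\alpha+\beta-1}$, so that
$$
\Dcalh[\rho] = \int \rho^{\alpha+\beta-1}\rho_{xx}^2\,dx \; - \; c(\alpha,\beta)\int \rho^{\alpha+\beta-3}\rho_x^4\,dx,
$$
while the right-hand side of (SC) is $C_0$ times the sum of the two integrals. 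The nonlinear Sobolev inequality with exponent $a=\alpha+\beta-1$ yields
$$
\int \rho^{\alpha+\beta-1}\rho_{xx}^2 \geq \Bigl(\frac{\alpha+\beta-2}{3}\Bigr)^{\!2} \int \rho^{\alpha+\beta-3}\rho_x^4.
$$
Splitting the $\rho_{xx}^2$-integral via a convex combination $(1-t)+t$, one checks that (SC) holds for some $C_0>0$ precisely when $c(\alpha,\beta) < ((\alpha+\beta-2)/3)^2$. Setting $u := 2(\alpha-1)-\beta$ and expanding the difference of both sides, this algebraic inequality reduces to $-u^2+u+2>0$, i.e. $(u-2)(u+1)<0$, that is $-1<u<2$, which is precisely the open strip $2\alpha-4<\beta<2\alpha-1$. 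For the converse (failure of positivity outside the strip), I would insert into $\Dcalh$ the approximate extremizers of the nonlinear Sobolev inequality used in its proof (the truncated functions $(\eps+x^2)^{a/2}$, with exponent matched to $\alpha+\beta-1$) and let $\eps\to 0$; the optimality of the Sobolev constant, asserted in the previous theorem, then forces $\Dcalh$ to be non-positive along this family.

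\emph{Item (i)} is a perturbation around the constant-coefficient case. When $\mu,\kappa$ are constants, $\zeta\equiv 0$ and (SC) reduces to the nonlinear Sobolev inequality applied with $a=-1$, namely $\int \rho^{-1}\rho_{xx}^2 \geq (4/9)\int \rho^{-3}\rho_x^4$, followed by a convex combination. For general smooth $\mu,\kappa$ and $|\rho-\rho_0|<\eps$, the weight $\mu(\rho)\kappa(\rho)/\rho$ lies in a small neighbourhood of $\mu(\rho_0)\kappa(\rho_0)/\rho_0$ and $\zeta(\rho)$ is uniformly bounded. Choosing the Sobolev exponent $a\neq 1$ far enough from $1$ that $((a-1)/3)^2$ exceeds $\rho_0^2|\zeta(\rho_0)|$ by a definite margin, then taking $\eps$ small, the perturbation from the constant-coefficient estimate is absorbed into that margin and (SC) survives. \emph{Item (iii)} is this same argument specialised to the power laws excluded by (ii): for any $\rho_0>0$, the coefficients $\mu,\kappa$ are smooth at $\rho_0$, and (i) applies directly on the range $|\rho-\rho_0|<c\rho_0$ for some $c<1$.

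The principal obstacle, as I see it, is the necessity direction of (ii): the sufficiency uses the Sobolev inequality in only one direction, whereas witnessing failure demands constructing a concentrating sequence that saturates \emph{both} the Sobolev constant and the weight of the $\zeta$-term. This requires tuning the exponent and normalization of the trial functions precisely to the parameters $(\alpha,\beta)$ on the boundary of the strip, and ensuring that the truncation chosen to fit the boundary conditions on $\DD$ contributes no spurious boundary terms in the limit $\eps\to 0$.
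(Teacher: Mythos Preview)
Your treatments of (ii) and (iv) match the paper's. For (ii) you reproduce the paper's computation and comparison with the nonlinear Sobolev constant $((\alpha+\beta-2)/3)^2$, and your algebraic reduction via $u = 2(\alpha-1)-\beta$ to the strip is correct; the paper is equally brief on the necessity direction, relying (as you do) on the optimality of the Sobolev constant.

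For (i), your route differs from the paper's and is more elaborate than necessary. The paper simply writes $\rho = \rho_0 + \eps\varphi$ with $|\varphi|\le 1$ and observes that the $\rho_{xx}^2$ contribution to $\Dcalh$ is of order $\eps^2$ while the $\rho_x^4$ contribution is of order $\eps^4$; one then controls $\int\varphi_x^4$ by $\int\varphi_{xx}^2$ via the standard Gagliardo--Nirenberg inequality $\|\varphi_x\|_4^2 \lesssim \|\varphi\|_\infty \|\varphi_{xx}\|_2$, and (SC) follows for $\eps$ small. Your alternative---apply the nonlinear Sobolev inequality with a freely chosen large exponent $a$ so that $((a-1)/3)^2$ exceeds $\rho_0^2|\zeta(\rho_0)|$, then freeze the weights---also works, but your motivating warm-up case is wrong: for \emph{constant} $\mu,\kappa$ one computes $\zeta(\rho) = -\tfrac{2}{3\rho^2}$, not $\zeta\equiv 0$ (indeed $\alpha=\beta=0$ lies outside the strip of (ii), so (SC) fails globally there, and the $a=-1$ Sobolev constant $4/9$ does not beat $2/3$). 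This error does not affect your general argument, which correctly permits $|a-1|$ to be chosen large.

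For (iii), the paper just says ``follows from (i) and a scaling argument.'' You invoke (i) but do not explain why the resulting $c$ is uniform in $\rho_0$. In your own framework this uniformity is automatic, since for power laws $\rho_0^2|\zeta(\rho_0)| = |c(\alpha,\beta)|$ is independent of $\rho_0$, so the same exponent $a$ works for every $\rho_0$ and all approximation errors scale like $\eps/\rho_0$; you should say so explicitly.
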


\begin{proof} (i) Set $\rho = \rho_0 + \eps \varphi$, where $|\varphi(x)| \leq 1$ for all $x$. The functional $\Dcalh$ becomes
$$
\Dcalh[\rho] \simeq \int_{\DD} \left( 
\eps^2 \frac{\mu(\rho) \kappa(\rho)}{\rho} (\varphi_{xx})^2 - \eps^4 \left(\frac{1}{6} \frac{\mu}{\rho}\kappa'' 
+ \frac{1}{3} \left(\frac{\mu}{\rho}\right)'' \kappa \right) (\varphi_x)^4 \right) \, dx.
$$
Taking $\eps$ sufficiently small, $\mu(\rho) \kappa(\rho) / \rho$ becomes bounded from below, and the result follows then 
if $\DD = \TT$ by the Sobolev embedding theorem, and if $\DD = \RR$ by the Gagliardo-Nirenberg inequality
$$
\| \varphi_x \|_4^2 \lesssim \| \phi \|_\infty \| \varphi_{xx} \|_2.
$$ 

\vskip.15cm 

\noindent
(ii) For $\mu(\rho) = \rho^\alpha$ and $\kappa(\rho) = \rho^\beta$, the expression~\eqref{eagle} becomes
$$
\Dcalh[\rho] = \int_{\Tbb} \left( \rho^{\alpha+\beta-1} \rho_{xx})^2 + \left( \frac{1}{6}\beta(\beta-1) + \frac{1}{3} (\alpha-1)(\alpha-2) \right) \rho^{\alpha+\beta-3}
(\rho_x)^4 \right) \, dx.
$$
Comparing this with~(\ref{hummingbird0}) gives the condition
$$
\frac{1}{6} \beta (\beta - 1) + \frac{1}{3} (\alpha-1)(\alpha-2) < \left( \frac{\alpha + \beta -2}{3} \right)^2, 
$$
which results in the range for $\beta$ in the theorem.

\vskip.15cm 

\noindent
(iii) This follows from the item (i) and a scaling argument.

\vskip.15cm 

\noindent(iv) This is immediate. 
\end{proof}

We conclude with a proposition which suggests a more general characterization of pairs $(\mu,\kappa)$ ensuring (SC).

\begin{proposition}[A characterization of the strong coercivity condition] 
When $\mu$ and $\kappa$ are power functions, say $\mu(\rho) = \rho^\alpha$ and $\kappa(\rho) = \rho^\beta$, the following three statements are equivalent: 

\begin{itemize}

\item[(i)] $\displaystyle \Dcalh[\rho] \geq 0$ for any positive and smooth function $\rho$.

\item[(ii)] $2 \alpha - 4 < \beta < 2 \alpha -1$.

\item[(iii)] $\Dcalh[\rho]$ can be written in the form 
$$
\Dcalh[\rho] = \int_{\Tbb} \frac{\mu(\rho) \kappa(\rho)}{\rho} \frac{1}{F(\rho)} \left| \big( \sqrt{F(\rho)} \rho_x \big)_x \right|^2 \, dx, 
$$
in which $F$ is a smooth and positive function.
\end{itemize}
\end{proposition}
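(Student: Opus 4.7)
My plan is to prove the three equivalences via the triangle $(iii)\Rightarrow(i)\Leftrightarrow(ii)\Rightarrow(iii)$. The implication $(iii)\Rightarrow(i)$ is immediate: the integrand in the representation is pointwise non-negative, being a product of the positive weight $\mu(\rho)\kappa(\rho)/(\rho F(\rho))$ and a square, so integration yields $\Dcalh[\rho] \geq 0$. The equivalence $(i)\Leftrightarrow(ii)$ is essentially contained in Theorem~\ref{toucan}(ii): outside the range $(2\alpha-4,2\alpha-1)$ the functional $\Dcalh$ fails to be positive, while inside it one has (SC), a fortiori (i). Thus the substantive task is $(ii)\Rightarrow(iii)$.

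For this, I would restrict the search for $F$ to the scale-invariant class of power functions $F(\rho)=\rho^\gamma$ with $\gamma\in\RR$ to be chosen; any such $F$ is automatically smooth and positive on $(0,+\infty)$. Expanding $(\sqrt{F(\rho)}\,\rho_x)_x = \rho^{\gamma/2}\rho_{xx} + \tfrac{\gamma}{2}\rho^{\gamma/2-1}\rho_x^2$, squaring, and multiplying by $\mu\kappa/(\rho F) = \rho^{\alpha+\beta-1-\gamma}$ gives the pointwise identity
\[
\frac{\mu(\rho)\kappa(\rho)}{\rho F(\rho)}\bigl((\sqrt{F(\rho)}\,\rho_x)_x\bigr)^2
= \rho^{\alpha+\beta-1}\rho_{xx}^2 + \gamma\,\rho^{\alpha+\beta-2}\rho_x^2\rho_{xx} + \tfrac{\gamma^2}{4}\rho^{\alpha+\beta-3}\rho_x^4.
\]
Integrating over $\DD$ and applying one integration by parts to the cross term via $\rho_x^2\rho_{xx} = \tfrac{1}{3}(\rho_x^3)_x$ (boundary terms vanishing by periodicity on $\Tbb$, or by the decay of $\rho_x$ on $\RR$), I obtain
\[
\int_{\DD}\frac{\mu\kappa}{\rho F}\bigl((\sqrt{F}\,\rho_x)_x\bigr)^2\,dx
= \int_{\DD}\rho^{\alpha+\beta-1}\rho_{xx}^2\,dx + \Bigl(\tfrac{\gamma^2}{4}-\tfrac{\gamma(\alpha+\beta-2)}{3}\Bigr)\int_{\DD}\rho^{\alpha+\beta-3}\rho_x^4\,dx.
\]
Comparison with the explicit expression~\eqref{eagle} for $\Dcalh$ under power laws reduces the problem to the scalar quadratic
\[
3\gamma^2 - 4(\alpha+\beta-2)\gamma - 12\Bigl(\tfrac{1}{6}\beta(\beta-1)+\tfrac{1}{3}(\alpha-1)(\alpha-2)\Bigr) = 0.
\]
A real root exists if and only if the discriminant $\Delta = 16(\alpha+\beta-2)^2 + 144\bigl(\tfrac{1}{6}\beta(\beta-1)+\tfrac{1}{3}(\alpha-1)(\alpha-2)\bigr)$ is non-negative; choosing such a $\gamma$ and setting $F(\rho) := \rho^\gamma$ then produces the representation $(iii)$.

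The main piece of algebra, and the only place where $(ii)$ genuinely enters, is the reconciliation $\Delta \geq 0 \Leftrightarrow 2\alpha-4 < \beta < 2\alpha-1$. I would carry this out by viewing $\Delta$ as a polynomial in $\beta$ (with coefficients depending on $\alpha$) and verifying that $\beta=2\alpha-1$ and $\beta=2\alpha-4$ are the two roots. This is the most delicate step of the argument, but ultimately it is a routine computation; once it is done, the existence of a real $\gamma$, and hence of $F$, follows at once and the proposition is proved.
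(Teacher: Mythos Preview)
Your approach is essentially the same as the paper's: both arguments use the triangle $(iii)\Rightarrow(i)\Leftrightarrow(ii)\Rightarrow(iii)$, and for the substantive step $(ii)\Rightarrow(iii)$ both look for $F$ among power laws and reduce to a quadratic whose discriminant recovers the range $2\alpha-4<\beta<2\alpha-1$. The paper parametrizes via the auxiliary function $u=\tfrac{\mu\kappa}{\rho}\tfrac{F'}{F}$ and an ansatz $u=A\rho^{\alpha+\beta-2}$, whereas you go directly with $F=\rho^{\gamma}$; since $A=\gamma$, the two quadratics are identical.

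One sign slip to fix: the coefficient of the $\rho_x^4$ term in $\Dcalh$ for power laws is $-\bigl(\tfrac{1}{6}\beta(\beta-1)+\tfrac{1}{3}(\alpha-1)(\alpha-2)\bigr)$, not $+\bigl(\cdots\bigr)$. Consequently your quadratic should read $3\gamma^{2}-4(\alpha+\beta-2)\gamma+12\bigl(\tfrac{1}{6}\beta(\beta-1)+\tfrac{1}{3}(\alpha-1)(\alpha-2)\bigr)=0$, with discriminant $16\bigl[(\alpha+\beta-2)^{2}-9\bigl(\tfrac{1}{6}\beta(\beta-1)+\tfrac{1}{3}(\alpha-1)(\alpha-2)\bigr)\bigr]$; as written, your discriminant is convex in $\beta$ and cannot be non-negative precisely on a bounded interval. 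With the corrected sign the argument goes through and matches the paper.
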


\begin{proof} The equivalence of (i) and (ii) was already established in Theorem~\ref{toucan}. It is clear that (iii) implies (i). So we are left with proving that 
(ii) implies (iii). Comparing the expression in (iii) with~(\ref{eagle}), one sees that the functions 
$$
\displaystyle u(\rho)=\frac{\mu(\rho) \kappa(\rho)}{\rho} \frac{F'(\rho)}{F(\rho)}, 
\qquad 
\rho>0, 
$$
has to solve the ordinary differential equation in $\rho$ 
$$
\frac{1}{3} u'(\rho)- \frac{1}{4} \frac{\rho}{\mu(\rho) \kappa(\rho)} u^2(\rho) = \frac{2}{3} \left(\frac{\mu(\rho)}{\rho^2} \right)' \kappa(\rho)
+ \frac{1}{3} \left(\frac{\mu(\rho)}{\rho^2} \right)'' \rho \kappa(\rho) + \frac{\mu(\rho)}{6 \rho} \kappa''(\rho). 
$$

If $\mu$ and $\kappa$ are powers of $\rho$, this equation becomes
$$
\frac{1}{3} u'(\rho) - \frac{1}{4} \rho^{1-\alpha-\beta} u^2 
= \left(
\frac{1}{3}(\alpha-2)(\alpha-1) + \frac{1}{6}\beta(\beta-1) \right) \rho^{\alpha + \beta -3}.
$$
Try the ansatz $u = A \rho^\lambda$. First, one needs $\lambda = \alpha + \beta -2$. Next, $A$ has to solve the quadratic equation
$$
\frac{3}{4} A^2 - (\alpha + \beta -2) A + (\alpha-2)(\alpha-1) + \frac{1}{2} \beta (\beta-1) = 0.
$$
This equation has real solutions if and only if its discriminant is non-negative, that is, 
$$
\Delta = (\alpha + \beta -2)^2 - 3 \left( (\alpha -2)(\alpha-1) + \frac{1}{2} \beta (\beta-1) \right) \geq 0.
$$
Solving this inequality gives the condition (ii). If it is satisfied, we can come back to $F$, and obtain the solutions
$$
F = \rho^s,
\qquad \quad s = \frac{2}{3} \left( \alpha + \beta -2 \right).
$$
\end{proof}


\section{Finite energy solutions to the Navier--Stokes--Korteweg system}
\label{sec:3}

\subsection{Existence theory for non-cavitating solutions ($\kappa \equiv 0$ or $\geq 0$ and $\DD=\TT$ or $\RR$)}

We begin by establishing an existence theory under the non-cavitating condition (NC) introduced in~(\ref{behave}). 

\begin{theorem}[Non-cavitating finite energy solutions to the Navier-Stokes Korteweg system] 
\label{guillemot1}
Consider the initial value problem associated to the NSK system \eqref{eq:101} posed on a domain $\DD$, which is 
the torus or the real line (with $\rho_\star \geq 0$ fixed), 
and assume that 
the non-cavitating condition (NC) and the strong coercivity condition (SC) hold.
Then, given any initial data $(\rho_0,u_0)$ with finite total energy $\Escr_0$ and 
effective energy $\Escrt_0$, satisfying the non-cavitating condition $\rho_0(x) > 0$ for all $x \in \DD$, 
there exists a global-in-time weak solution $(\rho, u): [0, +\infty) \times \DD \to (0, +\infty) \times \RR$
satisfying the energy estimate $(\EDscr)$ with, furthermore, $\rho(t,x)>0$ for all $(t,x)$, and 
$$
\rho \in 
\begin{cases}
L^\infty_{t,x} \cap L^\infty_{t\operatorname{loc}} H^1_{x \operatorname{loc}},      \qquad             &\kappa\equiv 0,
 \\
L^\infty_{t,x} \cap L^\infty_{t\operatorname{loc}} H^1_{x\operatorname{loc}} \cap L^2_{t\operatorname{loc}} H^2_{x \operatorname{loc}},         & \kappa>0. 
\end{cases} 
\qquad \qquad 
\qquad 
u \in L^\infty_{t\operatorname{loc}} L^2_{x\operatorname{loc}} \cap L^2_{t\operatorname{loc}} H^1_{x\operatorname{loc}},
$$ 
where $H^1, H^2$ denote the standard Sobolev spaces. 
\end{theorem}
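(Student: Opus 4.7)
The strategy is to build smooth approximate solutions, derive the uniform estimate $(\EDscr)$ for them, use the non-cavitating condition $(NC)$ to keep the mass density bounded away from zero, and then pass to the limit by compactness.

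First, I would mollify the initial data to obtain $(\rho_0^n, u_0^n)$ with $\rho_0^n \geq 1/n$ (truncating $\rho_0^n - \rho_\star$ to have compact support when $\DD = \RR$). For each fixed $n$, the effective reformulation \eqref{eq:27} with $\omega = 1$ displays the mass equation as a uniformly parabolic equation for $\rho$ with diffusivity $\mu/\rho$, and the equation for $\rho \widetilde u$ as parabolic with viscosity $\mu$; classical quasilinear parabolic theory then supplies a local-in-time smooth solution $(\rho^n, u^n)$ of the NSK system, which continues globally provided $\rho^n$ remains bounded from below.

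Second, I derive the uniform energy-dissipation bound $(\EDscr)$ for each $(\rho^n,u^n)$. Multiplying \eqref{eq:402} by $u^n$ yields the physical identity \eqref{cormoran1}, controlling $\sup_t \Escr_\star[\rho^n, u^n]$ and $\iint \mu(\rho^n)(u^n_x)^2\,dtdx$. Repeating the computation on the $\omega = 1$ effective system \eqref{eq:27} produces the analogous identity \eqref{cormoran2}. The strong coercivity hypothesis $(SC)$ converts the effective dissipation \eqref{eq:eed-omega1} into the non-negative combination of $(\rho^n_{xx})^2$ and $(\rho^n_x)^4/(\rho^n)^2$ weighted by $\mu\kappa/\rho$ appearing in $(\EDscr)$. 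Summing the physical and effective energy identities and using the identity $\rho \widetilde u^2 - \rho u^2 = 2u\mu\rho_x/\rho + \mu^2\rho_x^2/\rho^3$ extracts in addition the Bresch--Desjardins-type bound on $\mu^2 (\rho^n_x)^2/(\rho^n)^3$ in $L^\infty_t L^1_x$.

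Third, I invoke $(NC)$ to rule out cavitation. If $\mu(\rho) \gtrsim \rho^{1/2-s}$ near $\rho = 0$, then $\mu^2/\rho^3 \gtrsim \rho^{-2-2s}$, so the bound just obtained forces $\partial_x[(\rho^n)^{-s}]$ into $L^\infty_t L^2_x$; in the alternative case $\kappa(\rho) \gtrsim \rho^{-2-s}$, an analogous gradient bound for a negative power of $\rho^n$ follows directly from $\kappa(\rho^n)(\rho^n_x)^2 \in L^\infty_t L^1_x$, which sits inside $\Escr_\star$. Combined with the upper bound on $\rho^n$ implicit in $\Escr_\star[\rho^n,u^n]<+\infty$ (together with the asymptotic condition at infinity on the line, or conservation of mass on the torus), the one-dimensional Sobolev embedding $H^1\hookrightarrow L^\infty$ yields a uniform lower bound $\rho^n(t,x) \geq c(T) > 0$ on every finite time interval $[0,T]$. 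This rules out degeneracy of the parabolic system and extends each $(\rho^n, u^n)$ to a global smooth solution.

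Fourth, I pass to the limit $n \to \infty$. The previous estimates place $\rho^n$ in $L^\infty_{t,\loc}H^1_{x,\loc}$ (and in $L^2_{t,\loc}H^2_{x,\loc}$ when $\kappa > 0$) and $u^n$ in $L^\infty_{t,\loc}L^2_{x,\loc}\cap L^2_{t,\loc}H^1_{x,\loc}$. The equations \eqref{eq:101} control $\rho^n_t$ and $(\rho^n u^n)_t$ in negative-order Sobolev spaces, so the Aubin--Lions lemma delivers strong $L^2_{t,\loc}L^2_{x,\loc}$-compactness of $\rho^n$, and (when $\kappa > 0$) of $\rho^n_x$ as well. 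Combined with the uniform lower bound on $\rho^n$, this upgrades to strong convergence of $u^n$, and all nonlinear terms in \eqref{eq:101} then pass to the limit in the distributional sense. The main obstacle here is the quartic factor $(\rho^n_x)^2$ appearing in $K[\rho^n]$, whose limit requires strong $L^2_t L^2_x$-compactness of $\rho^n_x$: this is precisely the role played by the $(SC)$-derived $L^2_{t,\loc}L^2_{x,\loc}$-bound on $\rho^n_{xx}$, which together with Aubin--Lions furnishes the missing spatial compactness. The regularity of the limit $(\rho, u)$ stated in the theorem is read off directly from the uniform estimates.
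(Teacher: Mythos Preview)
Your outline follows the paper's proof closely: uniform $(\EDscr)$ bounds from $(SC)$, lower bound on $\rho$ via $(NC)$ through the control on $\partial_x(\rho^{-s})$, upper bound on $\rho$, and passage to the limit by Aubin--Lions. Two points deserve correction.

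First, the $\omega=1$ effective system is \emph{not} uniformly parabolic: with $\omega=1$ one has $M^1[\rho,u]=\frac{\mu}{\rho}\rho_x\,\widetilde u$ in \eqref{eq:27}, so the second-order term in $\widetilde u$ disappears entirely from the momentum equation. The paper notes explicitly (just below \eqref{eq:28}) that uniform parabolicity of the effective system requires $\omega\in(0,1)$, and in fact sidesteps the whole construction step by citing Galerkin schemes and the reference~\cite{GambaJungelVasseur}. Your local-existence argument can be repaired by taking $\omega\in(0,1)$, or by working with the original system (parabolic in $u$, transport in $\rho$, third-order in $\rho$ when $\kappa>0$) and invoking the Euler--Korteweg well-posedness theory of~\cite{BDD1,BDD2}.

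Second, the $L^\infty_{t,x}$ upper bound on $\rho^n$ is not simply ``implicit in $\Escr_\star<+\infty$'': the energy alone only places $\rho^n$ in $L^\infty_t L^\gamma_x$. The paper's Step~4 instead shows that $\chi(\rho^n)\sqrt{e(\rho^n)/\rho^n}$ lies in $L^\infty_t W^{1,1}_x$ (using the Bresch--Desjardins control on $\mu^2\rho_x^2/\rho^3$ together with the growth assumption~\eqref{hypmukappa}), and combines this with a one-point upper bound furnished by mass conservation on the torus or by the normalized-energy control on the line.
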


\begin{proof} \noindent{\bf 1. Reduction to the pre-compactness of solutions to (NSK).} We need to establish that 
the pre-compactness of 
the family of NSK solutions satisfying (at the initial time) a uniform energy bound and a lower bound on the mass density. In other words,
 from any sequence satisfying these uniform bounds, we can extract a subsequence that converges (in a suitable 
topology) to a solution to  \eqref{eq:101}. This is the property required in order to deduce the strong convergence of approximate 
solutions to \eqref{eq:101} and, eventually, establish the existence of actual solutions. On the other hand, several standard methods 
are available for the construction of approximate solutions; one can for instance use Galerkin-type schemes and we refer to Gamba, J\"ungel, and Vasseur~\cite{GambaJungelVasseur} for further details.

\vskip.15cm 

\noindent{\bf 2. Energy bounds.} Motivated by the observation above, we now consider solutions $(\rho^n,u^n)$ associated with some
initial data $(\rho_0^n,u_0^n)$ whose physical energy $\Escr_0^n$ and effective energy $\Escrt_0^n$ are uniformly bounded, that is, 
$$
\sup_n \Big( \Escr_0^n + \Escrt_0^n \Big) < + \infty. 
$$
In view of the previous section and our assumption (SC), in particular, this implies the uniform bound $(\EDscr)$ for $(\rho^n,u^n)$. 

\vskip.15cm 

\noindent{\bf 3. Lower bound on the mass density.}  Here and in the following step, we rely on arguments first used in Hoff~\cite{Hoff} and Mellet and Vasseur~\cite{MelletVasseur} in order to cope with 
the viscosity term in Navier-Stokes equations. Starting with the case of the torus $\DD = \TT$, we observe that the energy provides us with the bound 
$$
\kappa(\rho^n) (\rho^n_x)^2 \in L^\infty_t L^1_x,
$$ 
while the energy and the effective energy (taken together) control 
$$
\frac{(\mu(\rho^n))^2}{(\rho^n)^3} (\rho^n_x)^2 \in L^\infty_t L^1_x.
$$ 
Thanks to the no-cavitating condition~(NC) (stated in \eqref{behave}), the latter two estimates imply the existence of $\alpha>0$ such that 
$((\rho^n)^{-\alpha})_x \in L^\infty_t L^2_x$. On the other hand, the mass $\Mscr_0^n := \int_\Tbb\rho^n\, dx$ is conserved in time
and is uniformy bounded thanks to our uniform energy bound, so that
the following lower bound holds: 
$$
\sup_{x \in \Tbb} \rho(t,x) \geq {\Mscr_0^n \over |\Tbb|}. 
$$
In turn,  by Sobolev's embedding theorem, we arrive at the lower bound $\inf_{t,x} \rho^n \geq \rho_ {\min}>0$ for all $n$.

In the case of the real line $\DD = \RR$ with $\rho_\star>0$, the same arguments yield again $((\rho^n)^{-\alpha})_x \in L^\infty_t L^2_x$ and we then 
observe 
that the bound on $\Escr$ implies the existence of constants $R,r>0$ such that, for any $x_0$, there exists $x \in [x_0-R,x_0+R]$ such that $\rho^n(t,x)>r$. We can then conclude again by Sobolev's embedding theorem.

In the case of the real line $\DD = \RR$ with $\rho_\star=0$, the same approach can once again be followed. First one shows, for any $t$, the existence of reals $R(t)$ and $r(t)$, depending only on the data, such that 
$\rho(x_0,t)>r$ for some $|x_0|<R$. Then one applies the fact that $((\rho^n)^{-\alpha})_x \in L^\infty_t L^2_x$ to get a lower bound (which goes to zero as $x \rightarrow \infty$. We skip the details. One obtains,
for any $T$, the existence of a positive function $\zeta_T$ such that $\rho^n(x,t) > \zeta_T(x,t)$ for $t<T$.

\vskip.15cm 

\noindent{\bf 4. Upper bound on the mass density.}
In the case of the torus $\DD = \TT$, observe as above that the energy and the effective energy (taken together) control 
$\frac{(\mu(\rho^n))^2}{(\rho^n)^3} (\rho^n_x)^2$ in $L^\infty_t L^1_x$.
Thanks to the mild assumption~(\ref{hypmukappa}), this implies that $\chi(\rho^n) \sqrt{\frac{e(\rho^n)}{\rho^n}}$ (where $\chi$ cuts off smoothly to $\rho^n \geq 1$)
belongs to $L^\infty_t W_x^{1,1}$. Indeed, by the Cauchy-Schwarz inequality 
and the growth condition on the pressure, we have
$$
\aligned
\int_\Tbb \chi(\rho^n) \Big| \partial_x \sqrt{e(\rho^n) / \rho^n} \Big| \, dx
& \lesssim 
\int_\Tbb \chi(\rho^n)\left| \frac{\sqrt{e(\rho^n)}}{(\rho^n)^{3/2}} \rho^n_x \right| \, dx
\\
& \leq 
\left( \int_\Tbb \chi(\rho^n)e(\rho^n)\, dx \int_\Tbb \chi(\rho^n)\frac{(\rho^n_x)^2}{(\rho^n)^3}\, dx \right)^{1/2} \leq C(\mathscr{E}_0,\widetilde{\mathscr{E}_0}).
\endaligned
$$
On the other hand, the mass equation gives
$$
\inf_{x \in \Tbb} \rho^n(t,x) \leq {\Mscr_0^n \over |\Tbb|}.  
$$
These two observations imply the existence of the uniform upper bound $\sup_{t,x} \rho^n \leq \rho_{\max}$ uniformly in $n$.

In the case of the real line $\DD = \RR$, the second observation above needs to be modified, as follows: by virtue of the boundedness of the renormalized energy, there exists a constant $ \rho_{\max}>0$ such that, for any $x_0$, there exists $x \in [x_0-1,x_0+1]$ such that $\rho^n(t,x) \leq  \rho_{\max}$.

\vskip.15cm 

\noindent{\bf 5. Uniform estimates in Sobolev spaces.} The uniform estimate $(\EDscr)$ and the upper and lower bounds on $\rho^n$ imply (with uniform bounds)
$$
\rho^n \in L^\infty_{t \operatorname{loc}} H^1_{x \operatorname{loc}} \cap L^2_{t\operatorname{loc}} H^2_{x\operatorname{loc}},
\qquad
\quad u^n \in L^\infty_{t\operatorname{loc}} L^2_{x\operatorname{loc}} \cap L^2_{t\operatorname{loc}} H^1_{x\operatorname{loc}}.
$$
Furthermore, by returning to the system (NSK) satisfied by $(\rho^n,u^n)$, we also deduce uniform bounds for the time derivatives 
$$
\rho^n_t \in L^\infty_{t\operatorname{loc}} H^{-1}_{x\operatorname{loc}},
\qquad \quad
u^n_t \in L^2_{t\operatorname{loc}} W^{-1,1}_{x\operatorname{loc}}.
$$

\vskip.15cm 

\noindent{\bf 6. Passage to the limit.} Using standard Sobolev compactness theorems and, in particular, Aubin-Lions lemma, we deduce that there exists a limit $(\rho,u)$ such that, 
for all $\eps \in (0,1)$, 
\begin{equation*}
\begin{split}
& \rho^n \to \rho \quad 
\begin{cases}
& \mbox{weakly--$\star$ in $L^\infty_{\loc, t,x}$},
\\
& \mbox{weakly in $L_t^\infty H_x^1$ and strongly in $L^p_t H^{1-\eps}_x$ locally for all $p \in [1, +\infty)$},
\\
& \mbox{weakly in $L_t^2 H_x^2$ and strongly in $L^2_t H^{2-\eps}_x$ locally if $\kappa>0$}, 
\end{cases}
\\
& u^n \to u \quad
\begin{cases}
& \mbox{weakly-$\star$ in $L_t^\infty L_x^2$ locally for all $p<+\infty$,} 
\\
&   \mbox{weakly in $L_t^2 H_x^1$ locally, and strongly in $L^2_t H^{1-\eps}_x$ locally}.
\end{cases}
\end{split}
\end{equation*}
It is immediate to now pass to the limit in the weak formulation of the system (NSK) and check that $(\rho,u)$ solves the equations in a weak sense.

Observe that, in the case that the capillarity $\kappa$ vanishes identically, then the convergence property 
$\rho^n \to \rho$ (weakly in $L_t^2 H_x^2$) is ``lost'' but, simultaneously, the third-order terms in our system are gone, so that the convergence properties above are strong enough to allow us to conclude. 
\end{proof}
 

\subsection{Existence theory for cavitating solutions ($\kappa=0$ and $\DD = \TT$)}

We are now interested in cavitating solutions, which we will obtain as limits of the non-cavitating solutions in Theorem~\ref{guillemot1}.  
We begin by considering the Navier-Stokes (NS) system
\bel{eq:101-NS} 
\aligned
\rho_t + (\rho u)_x =& \, 0, 
\\
(\rho u)_t + \big( \rho u^2 + p(\rho) \big)_x =& \, (\mu(\rho) \, u_x)_x, 
\endaligned
\ee 
and we focus on the Cauchy problem posed on the torus (while the following two subsections concern the real line).

\begin{theorem}[Cavitating finite energy solutions to the Navier-Stokes system] 
\label{guillemot2}
Consider the system \eqref{eq:101-NS}  
posed on the torus with prescribed initial data $(\rho_0,u_0)$ with finite energy $\Escr_0$ and finite effective energy $\Escrt_0$. Assume that the data satisfy 
\bel{eq:add-data}
\rho_0 \, |u_0|^{2+s} \in L^1(\TT)
\ee
for some $s>0$ and that 
\bel{pinson}
\qquad {p(\rho)^2 \over \mu(\rho) \rho^{s/2}} \lesssim 1 \qquad \mbox{for small $\rho$.} 
\ee
Then, the  Cauchy problem associated with \eqref{eq:101-NS} 
admits a global-in-time weak solution  $(\rho, u): [0, +\infty) \times \TT \to [0, +\infty) \times \RR$,
which may contains vacuum regions (and is understood in the sense of Definition~\ref{def:weakcav}, below),  
and has finite energy and dissipation in the sense $(\EDscr)$ 
with, furthermore, $\rho \in L^\infty_{t,x}$ and $\mu(\rho) \in \mathcal{C}^{1/4}_{t,x}$.
\end{theorem}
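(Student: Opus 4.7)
The plan is to obtain the cavitating solution as a limit of the non-cavitating solutions produced by Theorem~\ref{guillemot1}. Concretely, I would approximate the viscosity by $\mu^n(\rho) := \mu(\rho) + \frac{1}{n}\rho^\alpha$ for some $\alpha<1/2$ (so that $(\mu^n,0)$ satisfies the no-cavitation condition (NC)), keep $\kappa \equiv 0$, and approximate the data by $\rho_0^n := \rho_0 + 1/n$ with $u_0^n := u_0$ cut off where $\rho_0$ is small, chosen so that $\Escr_0^n + \Escrt_0^n + \int_\TT \rho_0^n |u_0^n|^{2+s}\,dx$ stays uniformly bounded and converges to the corresponding quantity for $(\rho_0,u_0)$. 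Theorem~\ref{guillemot1} then yields smooth non-cavitating solutions $(\rho^n,u^n)$ satisfying the full energy--dissipation estimate $(\EDscr)$ uniformly in $n$, together with the effective energy control of $\mu(\rho^n)^2(\rho^n_x)^2/(\rho^n)^3$ in $L^\infty_t L^1_x$.

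The next step is to derive the $n$-uniform estimates that survive the cavitation. The key quantity is $\Phi^n := \Phi(\rho^n)$ where $\Phi'(\rho) := \mu(\rho)/\rho^{3/2}$, for which the effective energy directly gives $\Phi^n_x \in L^\infty_t L^2_x$; combining this with the conservation of mass $\int_\TT \rho^n \,dx = \Mscr_0$ and the growth assumption $\rho \mu' \lesssim \mu$ yields, by Sobolev embedding on $\TT$, a uniform upper bound $\rho^n \leq \rho_{\max}$. Once $\rho^n$ is bounded, the same effective-energy term controls $(\mu(\rho^n))_x = \mu'(\rho^n)\rho^n_x$ in $L^\infty_t L^2_x$, giving the spatial $C^{1/2}_x$ bound for $\mu(\rho^n)$; for the time direction, the mass equation gives $\rho^n_t = -(\rho^n u^n)_x$, hence $\mu(\rho^n)_t = \mu'(\rho^n)\rho^n_t$, and using $\rho^n u^n \in L^\infty_t L^2_x \cap L^2_t L^p_x$ (coming from the finite energy and the improved moment via (\ref{eq:add-data})), one bounds $\mu(\rho^n)_t$ in some negative Sobolev space. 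A standard Morrey--Gagliardo--Nirenberg interpolation between the spatial and temporal bounds then yields $\mu(\rho^n) \in C^{1/4}_{t,x}$ uniformly, which in particular gives strong convergence $\mu(\rho^n)\to \mu(\rho)$.

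With these bounds I would pass to the limit using Aubin--Lions-type compactness arguments adapted to cavitating solutions. The momentum $m^n := \rho^n u^n$ converges weakly in $L^\infty_t L^2_x$, and together with the strong convergence of $\rho^n$ (from $\mu(\rho^n)\in C^{1/4}_{t,x}$ together with monotonicity of $\mu$ near $0$ once (NC) is dropped, or alternatively via the $H^1_x$ control of $\Phi(\rho^n)$), one recovers $u$ as the measurable function satisfying $\rho u = m$ a.e.~on $\{\rho>0\}$. The convective flux $\rho^n (u^n)^2$ is handled by writing it as $(m^n)^2/\rho^n$ on $\{\rho^n>0\}$ and exploiting the assumption $\rho_0|u_0|^{2+s}\in L^1$: the Mellet--Vasseur-type identity (obtained by multiplying the momentum equation by $(1+(u^n)^2)^{s/2}u^n$) propagates this higher moment in time and yields equi-integrability of $\rho^n(u^n)^2$, which with Vitali's theorem gives the strong convergence needed to identify the limit. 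The pressure $p(\rho^n)$ is similarly handled: on the non-vanishing set the strong convergence of $\rho^n$ does the job, while near the vacuum the hypothesis $p(\rho)^2 \lesssim \mu(\rho)\rho^{s/2}$ combined with the $L^1$ bound on $\rho^n |u^n|^{2+s}$ and on $\mu(\rho^n)(u^n_x)^2$ controls $p(\rho^n)$ in an $L^q_{t,x}$ space with $q>1$, preventing concentration.

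The main obstacle, as usual in compressible Navier--Stokes theory with density-dependent viscosities, is the passage to the limit in the flux $\rho^n (u^n)^2$ in regions where $\rho^n$ may degenerate: the velocity itself is not controlled in any reasonable space near the vacuum, only the momentum and $\sqrt{\rho^n}\,u^n$ are. The whole point of the extra moment condition (\ref{eq:add-data}) combined with the pressure--viscosity comparison (\ref{pinson}) is precisely to produce the missing equi-integrability; verifying that the Mellet--Vasseur moment is indeed propagated by the NSK approximants (with the modified $\mu^n$) and that the bounds are uniform in $n$ is the delicate technical step. Once this is done, the rest of the passage to the limit, and the definition of weak solution with cavitation in the sense of Definition~\ref{def:weakcav}, is routine.
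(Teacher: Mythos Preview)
Your strategy coincides with the paper's: approximate by non-cavitating solutions from Theorem~\ref{guillemot1}, derive the uniform $L^\infty_{t,x}$ bound on $\rho^n$ and the $\mathcal{C}^{1/4}_{t,x}$ bound on $\mu(\rho^n)$ from the effective-energy control of $\mu(\rho^n)\rho^n_x/(\rho^n)^{3/2}$, propagate the higher moment $\int\rho^n|u^n|^{2+s}$ via the Mellet--Vasseur multiplier, and pass to the limit using equi-integrability of $\rho^n(u^n)^2$.

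Two points deserve sharpening. First, the hypothesis~\eqref{pinson} is not needed for the convergence of $p(\rho^n)$: once $\rho^n\le\rho_{\max}$ and $\rho^n\to\rho$ strongly (from the $\mathcal{C}^{1/4}$ bound on $\mu(\rho^n)$), $p(\rho^n)\to p(\rho)$ is immediate by dominated convergence. The only place \eqref{pinson} enters is exactly in the moment propagation (the paper's Lemma~\ref{loriot}), where it is used to bound the term $\int\frac{p(\rho)^2}{\mu(\rho)}|u|^s\,dx$ arising after integrating the pressure against $|u|^s u_x$. Second, your sketch is vague about the a.e.\ convergence of $u^n$ on $\{\rho>0\}$: weak convergence of $m^n=\rho^n u^n$ combined with strong convergence of $\rho^n$ does not by itself yield this, and without it Vitali's theorem is not applicable to $\rho^n(u^n)^2$. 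The paper's device is to introduce the \emph{pseudo-momentum} $m_\sharp^n:=\min(\mu^n(\rho^n),\rho^n)\,u^n$, for which one can bound both $(m_\sharp^n)_x$ in $L^2_tL^1_x$ and $(m_\sharp^n)_t$ in $L^2_tH^{-1}_x$ directly from the energy quantities, so that Aubin--Lions gives strong $L^2_{t,x}$ (hence a.e.) convergence of $m_\sharp^n$; the velocity is then recovered as $u=m_\sharp/\mu_\sharp(\rho)$ on $\{\rho>0\}$. Finally, you should not dismiss the viscous flux $\mu^n(\rho^n)u^n_x$ as routine: since only $\sqrt{\mu^n(\rho^n)}\,u^n_x$ is controlled in $L^2_{t,x}$, the paper (Step~7) splits via a smooth cutoff $\chi(\mu^n(\rho^n)/\eps)$, uses weak $L^2$ convergence of $u^n_x$ on $\{\mu(\rho)>\eps\}$ together with the uniform $\mathcal{C}^{1/4}$ convergence of $\mu(\rho^n)$, and shows the near-vacuum remainder is $O(\sup_{[0,2\eps]}\sqrt{\mu})$.
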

 
We emphasize that the mass density $\rho$ is continuous, so that the following notion of weak solution applies. 

\begin{definition} 
\label{def:weakcav}
 Under the assumptions and regularity properties in Theorem~\ref{guillemot2}, 
a {\bf weak solution with cavitation} to the system \eqref{eq:101-NS} is defined   as follows:
the uniform estimate $(\EDscr)$ controls $e(\rho)$ in $L^\infty_t L^1_x$, $\rho u^2$ in $L^\infty_t L^1_x$, and
$\mu(\rho) u_x^2$ in $L^2_t L^2_x$, so 
that the conservative variable and flux terms $\rho$, $\rho u$, $\rho u^2$, $p(\rho)$, and $\mu(\rho) u_x^2$ are locally integrable functions. 
For instance, for $\mu(\rho) u_x$ and by
Cauchy-Schwarz, $\mu(\rho) u_x \lesssim \mu(\rho) u_x^2 + \mu(\rho)$. 
\end{definition}

Before we proceed with the proof of Theorem~\ref{guillemot2}, 
we establish a preliminary lemma, which takes advantage of the viscosity term in order to derive a better integrability property for the velocity. 
Observe that the alternative approach in Theorem~\ref{guillemot3}, below,
relies on a hyperbolic-type property of higher-integrability, which eventually 
allows us to remove the additional integrability condition \eqref{eq:add-data}. 
 
\begin{lemma}[Higher-order integrability of the velocity]
\label{loriot}
Under the conditions in Theorem~\ref{guillemot2}, for each $T>0$ there exists a constant $C(T,\Escr_0,\Escrt_0)>0$ 
such that any non-cavitating solution of the Navier-Stokes system \eqref{eq:101-NS} defined 
on the torus $\TT$ satisfies for all $t<T$
$$
\int_\TT (\rho |u|^{2+s})(t, \cdot) \, dx + \int_0^t \int_\TT |u|^s \mu(\rho) |u_x|^2 \,dtdx
\lesssim \int_\TT \rho_0 |u_0|^{2+s} \, dx + C(T,\Escr_0,\Escrt_0).
$$
\end{lemma}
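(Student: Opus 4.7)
The natural starting point is to test the momentum equation in non-conservative form against $(2+s)\,u|u|^s$; by the mass equation this gives the conservative identity
\begin{equation*}
\partial_t(\rho|u|^{2+s}) + \partial_x(\rho u|u|^{2+s}) = (2+s)\,u|u|^s\,\bigl((\mu(\rho)u_x)_x - p(\rho)_x\bigr).
\end{equation*}
Integrating over $\TT$ and integrating by parts, using $(u|u|^s)_x = (1+s)|u|^s u_x$, yields the key identity
\begin{equation*}
\frac{d}{dt}\int_\TT \rho|u|^{2+s}\,dx + (1+s)(2+s)\int_\TT |u|^s \mu(\rho) u_x^2\,dx = (1+s)(2+s)\int_\TT |u|^s u_x\, p(\rho)\,dx.
\end{equation*}
The cross term on the right is handled by Cauchy--Schwarz weighted with $\mu(\rho)$,
\begin{equation*}
|u|^s |u_x|\,p(\rho) \leq \tfrac{1}{2}|u|^s \mu(\rho) u_x^2 + \tfrac{1}{2}|u|^s \frac{p(\rho)^2}{\mu(\rho)},
\end{equation*}
which absorbs half of the viscous dissipation on the left, reducing the problem to estimating $\int_\TT |u|^s p(\rho)^2/\mu(\rho)\,dx$.

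To this end I would split $\TT$ into a region $\{\rho \leq \rho_*\}$, on which \eqref{pinson} applies, and its complement. On the small-$\rho$ region, \eqref{pinson} gives $p(\rho)^2/\mu(\rho) \lesssim \rho^{s/2}$, and Young's inequality with conjugate exponents $(2+s)/s$ and $(2+s)/2$ produces
\begin{equation*}
|u|^s \rho^{s/2} \leq \tfrac{s}{2+s}\,\rho|u|^{2+s} + \tfrac{2}{2+s}\,\rho^{s^2/4}.
\end{equation*}
The harmless term $\rho^{s^2/4}$ is uniformly bounded, because the upper bound $\rho \in L^\infty_{t,x}$ derived in Step~4 of the proof of Theorem~\ref{guillemot1} uses only the physical and effective energy estimates, not the (NC) condition. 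On the complementary region $\rho \geq \rho_*$, both $\mu$ and $p$ are smooth and strictly positive on the relevant compact range of $\rho$-values, so $p^2/\mu$ is bounded, and the contribution is controlled by $|\TT| + C\int \rho|u|^{2+s}\,dx$ via $|u|^s \leq 1 + |u|^{2+s}$ and the lower bound $\rho \geq \rho_*$.

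Putting everything together produces a differential inequality of the form
\begin{equation*}
\frac{d}{dt}\int_\TT \rho|u|^{2+s}\,dx + c\int_\TT |u|^s \mu(\rho) u_x^2\,dx \leq C(\Escr_0,\Escrt_0)\Bigl(1 + \int_\TT \rho|u|^{2+s}\,dx\Bigr),
\end{equation*}
and Gronwall's lemma on $[0,T]$ delivers the instantaneous bound on $\int_\TT \rho|u|^{2+s}$, after which a time integration yields the spacetime viscous bound. The main obstacle is that the hypothesis \eqref{pinson} is tailored exactly so that the Young step closes against $\rho|u|^{2+s}$: without the balance $p^2 \lesssim \rho^{s/2}\mu$, the pressure--velocity cross term would generate an integral that is not controlled by the finite-energy data and the quantity being estimated. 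A secondary technical point is justifying the formal computation on weak solutions, which is handled by smooth approximation; the non-cavitating solutions supplied by Theorem~\ref{guillemot1} are regular enough ($u \in L^2_t H^1_x$, with $\rho$ bounded above and below away from zero) to make the manipulation rigorous.
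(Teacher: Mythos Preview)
Your proof is correct and follows the same overall strategy as the paper: multiply the momentum equation by $u|u|^s$, integrate over $\TT$, integrate the viscous term by parts, and handle the pressure cross term by weighted Cauchy--Schwarz, leaving only $\int_\TT |u|^s p(\rho)^2/\mu(\rho)\,dx$ to be estimated. The only difference is in this last estimate. You split according to the size of $\rho$, apply Young's inequality to produce a $\rho|u|^{2+s}$ term, and close by Gronwall; this works, but yields a constant that grows exponentially in $T$. The paper instead writes the integrand as $\rho^{s/2}|u|^s \cdot \dfrac{p(\rho)^2}{\mu(\rho)\rho^{s/2}}$ and observes that the second factor lies in $L^\infty_{t,x}$ (by \eqref{pinson} for small $\rho$ and by the uniform upper bound on $\rho$ for large $\rho$), while the first factor equals $(\rho u^2)^{s/2}$ and hence is bounded in $L^\infty_t L^{2/s}_x \hookrightarrow L^\infty_t L^1_x$ directly by the kinetic energy. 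This avoids Gronwall altogether and yields a constant linear in $T$. Either route is acceptable here, but the paper's H\"older factorization is the cleaner closure and worth knowing.
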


\begin{proof} We follow an argument in Mellet and Vasseur~\cite{MelletVasseur} and multiply the momentum equation in \eqref{eq:101-NS} 
by $|u|^s u$. After integrating in space, we obtain 
$$
\int_\TT \left( \rho u_t + \rho u u_x\right) |u|^s u \, dx + \int_\TT p(\rho)_x |u|^s u \, dx 
- \int_\TT (\mu(\rho) u_x)_x |u|^s u \, dx = 0.
$$
The first and third terms above can be rewritten in the form 
\bel{eq:4001}
\frac{1}{2+s} \frac{d}{dt} \int_\TT \rho |u|^{2+s}\, dx + (s+1) \int_\TT \mu(\rho) u_x^2 |u|^s \, dx,
\ee
while, by integrating by parts and using Cauchy-Schwarz inequality,  the second term can be bounded as follows: 
$$
\left| \int_\TT p(\rho)_x |u|^s u \, dx \right| \lesssim \int_\TT p(\rho) |u|^s |u_x| \, dx 
\leq \eps \int_\TT \mu(\rho) u_x^2 |u|^s \, dx + \frac{1}{\eps} \int_\TT \frac{p(\rho)^2}{\mu(\rho)} |u|^s \, dx.
$$
The first term in the right-hand side above is controlled by the dissipation term in \eqref{eq:4001}, provided $\eps$ is sufficiently small. 
For the second term, we write the integrand in the form $\rho^{s/2} u^s \frac{p(\rho)^2}{\mu(\rho)\rho^{s/2}}$
and we observe that:
\begin{itemize}

\item[(i)] The factor $\rho^{s/2} u^s$ is bounded in $L^\infty_t L^{2/s}_x$ (in terms of the physical energy) and thus in 
$L^\infty_t L^1_x$. 

\item[(ii)] The factor $\frac{p(\rho)^2}{\mu(\rho)\rho^{s/2}}$ belongs to $L^\infty_{t,x}$
since (\ref{pinson}) is assumed and the mass density is uniformly bounded in $L^\infty_{t,x}$ (cf.~the proof of Theorem~\ref{guillemot1} or see 
\eqref{eq:4002}, below).
\end{itemize}
\end{proof}

\begin{proof}[Proof of Theorem~\ref{guillemot2}]
 {\bf 1. Approximation from the non-cavitating regime.} 
Consider a viscosity coefficient $\mu=\mu(\rho)$ and initial data $(\rho_0,u_0)$ be as in the statement of the theorem,
and let us solve the NS system with non-cavitating data $(\rho_0+1/n, u_0)$ and viscosity $\mu^n(\rho) = \mu(\rho)+1/n$. 
By Theorem~\ref{guillemot1}, there exists a solution $(\rho^n,u^n)$ which enjoys 
uniform physical energy and effective energy bounds and such that
\bel{eq:4002}
\inf_{t,x} \rho^n(t,x) \geq \rho_{\min}^n>0,
\quad 
\qquad
\sup_{t,x} \rho^n(t,x) < \rho_{\max}, 
\ee
in which $\rho_{\min}^n>0$ may depend on $n$ but $\rho_{\max}$ is independent of $n$. 
We are going to establish that these solutions converge (as $n \to \infty$) to a solution to the NS system, denoted by $(\rho,u)$. 

\vskip.15cm 

\noindent{\bf 2. Uniform bounds on the mass density and its derivatives.} 
Thanks to the energy bound, $\rho_n$ is bounded in $L^\infty_t L^\gamma_x$ locally, 
so that, upon selecting a subsequence if necessary, we assume ($n \to +\infty$)
$$
\rho^n \to\rho \quad \mbox{weakly-}\star \text{ in } L^\infty_t L_x^\gamma. 
$$
On one hand, the inequality 
$$
| (\mu^n(\rho^n))_x |
 \lesssim
\frac{\mu^n(\rho^n)}{\rho^n}  \, | \rho^n_x | 
\lesssim 
\rho_{\max}^{1/2} \frac{\mu^n(\rho^n)}{(\rho^n)^{3/2}} \, | \rho^n_x| 
$$
implies that $\| \mu^n(\rho^n)_x \|_{L^\infty_t L^2_x} \lesssim C(\Escr_0,\Escrt_0,\Mscr_0)$, 
whereas, on the other hand, 
$$
| (\mu^n(\rho^n))_t | 
\lesssim \mu^n(\rho^n) |u^n_x| + \frac{\mu^n(\rho^n)}{(\rho^n)^{3/2}} |\rho^n_x| \sqrt{\rho^n} \, |u^n|,
$$
implies that $\| \mu^n(\rho^n)_t \|_{L^2_t L^2_x + L^\infty_t L^1_x} \lesssim C(\Escr_0,\Escrt_0,\Mscr_0)$.

From the above estimates, we now deduce that $\mu^n(\rho^n)$ is bounded in $\mathcal{C}^{1/4}$, indeed: 
$$
\aligned
\left| \mu^n(\rho^n)(t,x) - \mu^n(\rho^n)(s,y) \right| 
\leq 
& \left| \mu^n(\rho^n)(t,x) - \mu^n(\rho^n)(t,y) \right| + \left| \mu^n(\rho^n)(t,y) - \frac{1}{2h} \int_{[y-h;y+h]} \mu^n(\rho^n(t,\cdot)) \right| 
\\
& + \left| \frac{1}{2h} \int_{[y-h;y+h]} \mu^n(\rho^n(t,\cdot))- \frac{1}{2h} \int_{[y-h;y+h]} \mu^n(\rho^n(s,\cdot)) \right|
\\ 
& + \left| \mu^n(\rho^n)(s,y) - \frac{1}{2h} \int_{[y-h;y+h]} \mu^n(\rho^n(s,\cdot)) \right| 
\\
\lesssim 
& \sqrt{|x-y|} + \sqrt{h} + \frac{\sqrt{|t-s|}}{\sqrt{h}} + \frac{|t-s|}{h} \lesssim \sqrt{|x-y|} + |t-s|^{1/4} + \sqrt{|t-s|}.
\endaligned
$$ 
upon choosing $h = \sqrt{|t-s|}$.
Finally, thanks to Arzela-Ascoli theorem and after taking a subsequence if necessary, we have the strong convergence property 
$$
\mu(\rho^n) \to \mu(\rho) \quad \text{ in } \mathcal{C}^{1/4}_\loc.
$$ 

\vskip.15cm 


\noindent{\bf 3. Uniform bounds on the pseudo-momentum} $m_\sharp^n := \mu_\sharp^n(\rho^n) u^n$, 
with $\mu_\sharp^n(\rho) := \min (\mu^n(\rho),\rho)$. Since the mass density is uniformly bounded, it is clear that
\bel{chardonneret1}
\| m_\sharp^n \|_{L^\infty_t L^2_x} \lesssim \Escr_0, 
\ee
while its $x$-derivative can be estimated as follows:
\bel{chardonneret2}
| (m_\sharp^n)_x | 
\lesssim \mu_\sharp^n(\rho^n) |u^n_x| 
        + \frac{\mu_\sharp^n(\rho^n)}{\rho^n} \, |\rho^n_x| \, | u^n | 
\leq 
\mu^n (\rho^n) |u^n_x | +  \frac{\mu^n (\rho^n)}{(\rho^n)^{3/2}} \, |\rho^n_x| \sqrt{\rho_n} \, |u^n|,  
\ee 
implying $\|(m_\sharp^n)_x \|_{L^2_t L^1_x} \leq C(\Escr_0,\Escrt_0,\Mscr_0)$. 
To estimate the time derivative, we use the NS equations to replace the time derivatives of $\rho^n, u^n$, as follows:
$$
\aligned
(m_\sharp^n)_t  
= \, & \mu_\sharp^n(\rho_n) u^n_t + (\mu_\sharp^n)'(\rho^n) \rho^n_t u^n 
\\
= \, & - \mu_\sharp^n(\rho^n) u^n u^n_x - \frac{\mu_\sharp^n(\rho^n)}{\rho^n} (p(\rho^n))_x 
       + \frac{\mu_\sharp^n(\rho^n)}{\rho^n} (\mu^n(\rho^n) u^n_x)_x - (\mu_\sharp^n)'(\rho^n) \rho^n u^n_x u^n
       - (\mu_\sharp^n)'(\rho^n) \rho^n_x u^n u^n  
\\
= \, & \Big( - (\Gamma(\rho^n))_x + (\frac{\mu_\sharp^n(\rho^n)}{\rho^n} \mu^n(\rho^n) u^n_x)_x - (\mu_\sharp^n(\rho^n)(u^n)^2)_x \Big)
\\
& + \Big(  - (\frac{\mu_\sharp^n}{\rho^n})'(\rho^n) \rho^n_x \mu^n(\rho^n) u^n_x
 + (\mu_\sharp^n)'(\rho^n) \rho^n u^n_x u^n- \mu_\sharp^n(\rho^n) u^n u^n_x \Big) =: \NI + \NII,
\endaligned
$$
where the function $\Gamma=\Gamma(\rho)$ satisfies $\Gamma(0)=0$ and $\Gamma' = \frac{\mu_\sharp(\rho) p'(\rho)}{\rho}$. It is now easy to see that
$\left\| \NI \right\|_{L^2_{t \operatorname{loc}} H^{-1}} \leq C (\Escr_0,\Escrt_0,M)$ while $ \left\| \NII \right\|_{L^2_{t \operatorname{loc}} L^1_x}  \leq C (\Escr_0,\Escrt_0,\Mscr_0)$.
Therefore, by Sobolev embedding, we find 
\bel{chardonneret3}
\| (m_\sharp^n)_t \|_{L^2_{t \operatorname{loc}} H^{-1}} \leq C (\Escr_0,\Escrt_0,\Mscr_0).
\ee

\vskip.15cm 


\noindent{\bf 4. Convergence of the fluid velocity.} The Aubin-Lions lemma, combined with the estimates~(\ref{chardonneret1}),~(\ref{chardonneret2}) and (\ref{chardonneret3}), gives 
the existence of $m_\sharp$ that a subsequence of $m_\sharp^n$ converges in $L^2_t L^2_x$ locally to $m_\sharp$. 
Upon taking a further subsequence, we can assume that it also converges almost everywhere. We then set
$$
u = \begin{cases}
\frac{m_\sharp}{\mu_\sharp(\rho)}, &  \rho \neq 0,
\\ 
0,                  & \rho = 0. 
\end{cases}
$$ 
With this definition, $u^n$ converges to $u$ almost everywhere on the set $\big\{ \rho > 0 \big\}$.
Next, choose $\eps>0$. Recall that $\mu(\rho^n)$ converges to $\mu(\rho)$ in $\mathcal{C}^{1/4}$. In particular, for 
all sufficiently large $n$, we have $\mu(\rho^n) > \frac{\eps}{2}$ on the set $O_\eps = \big\{\mu(\rho) > \eps \big\}$, which
 gives us a uniform bound for $u^n_x$ in $L^2_{t,x}(O_\eps)$.
Taking a further subsequence if necessary, we can assume that $u^n_x$ converges weakly to some $v$ in $L^2_{t,x}(O_\eps)$. It is easy to check that
$v = u_x$. 
By a diagonal argument, we can further achieve that, for each $\eps$, $u^n_x$ converges weakly to $u_x$ in $L^2_{t,x}(O_\eps)$.

\vskip.15cm 


\noindent
{\bf 5. Passage to the limit in the momentum.} Denoting by $\chi$ the characteristic function of the interval $[-1,1]$
and for arbitrary $L, T>0$, we have 
$$
\aligned
\int_0^T \int_\TT | \rho^n u^n - \rho u | \, dtdx 
\leq 
& \int_0^T \int_\TT \left| \rho^n u^n \chi \left( \frac{u^n}{L} \right) - \rho u \chi \left( \frac{u}{L} \right) \right| \, dtdx 
\\
& + \int_0^T \int_\TT  \rho^n | u^n | \,\Bigg( 1- \chi \left( \frac{u^n}{L} \right) \Bigg) \, dtdx 
   + \int_0^T \int_\TT \rho |u|  \, \Bigg( 1-\chi \left( \frac{u}{L} \right) \Bigg) \, dtdx.
\endaligned
$$
The first term in the above right-hand side converges to $0$ as $n\to \infty$ (by the dominated convergence theorem), since we have uniform bounds on the integrands. For the second and third terms, we rely on the uniform bound on the physical energy and write 
$$
\limsup_{n \to +\infty} \int_0^T \int_\TT | \rho^n u^n - \rho u | \, dtdx 
\leq \frac{C(\Escr_0)T}{L}.
$$
Letting $L$ tend to infinity, we see that $\rho^n u^n$ converges to $\rho u$ in $L^1_{t,x}$.

\vskip.15cm 


\noindent{\bf 6. Passage to the limit in the momentum flux.} The momentum flux is treated in a similar way to the previous step, by now writing 
$$
\aligned
\int_0^T \int_\TT \left| \rho^n (u^n)^2 - \rho u^2 \right| \, dtdx 
\leq 
& \int_0^T \int_\TT \left| \rho^n (u^n)^2 \chi \left( \frac{u^n}{L} \right) - \rho u^2 \chi \left( \frac{u}{L} \right) \right| \, dtdx 
\\
& + \int_0^T \int_\TT \rho^n (u^n)^2 \, \Bigg( 1- \chi \left( \frac{u^n}{L} \right)\Bigg) \, dtdx 
+ \int_0^T \int_\TT  \rho u^2 \, \Bigg(1- \chi \left( \frac{u}{L} \right) \Bigg) \, dtdx. 
\endaligned
$$
At this juncture, we rely on the higher-integrability property established in Lemma~\ref{loriot} and obtain
$$
\limsup_{n \to +\infty} \int_0^T \int_\DD \left| \rho^n (u^n)^2 - \rho u^2 \right| \, dtdx \leq \frac{C(\Escr_0)T}{L^s}.
$$
By letting $L$ tend to infinity, we conclude that $\rho^n (u^n)^2$ converges to $\rho u^2$ in $L^1_{t,x}$ locally.

\vskip.15cm 


\noindent{\bf 7. Passage to the limit in the viscous force.} This final term is more delicate if one wants to cover the cavitating regime. 
We will prove that $\mu^n(\rho^n) u^n_x \to \mu(\rho) u_x$ in the sense of distributions and that all the terms involved can be defined. Pick up any smooth function $\chi: \RR \to [0, 1]$ that vanishes identically on the interval $[-1,1]$, and is identically
equal to $1$ on $[-2,2]^c$. Then, if $\phi$ is any test-function and $\eps>0$, we have 
$$
\aligned
\iint \phi \mu^n(\rho^n) u^n_x \, dtdx 
& = \iint \phi \mu^n(\rho^n) u^n_x \chi\left( \frac{\mu^n(\rho^n)}{\eps} \right) \, dtdx
    +  \iint \phi \mu^n(\rho^n) u^n_x \left( 1 - \chi\left( \frac{\mu^n(\rho^n)}{\eps} \right) \right) \, dtdx
\\
& = \NI + \NII.
\endaligned
$$
Since $\mu^n(\rho^n)$ converges to $\mu(\rho)$ in $\mathcal{C}^{1/4}$; 
 for all  sufficiently large $n$, the functions $\chi\left( \frac{\mu^n(\rho^n)}{\eps} \right)$ are supported
on $O_{\eps/2}$. Using that $u^n_x$ converges to $u_x$ in $L^2_{t,x}(O_\eps)$ whereas $\chi\left( \frac{\mu^n(\rho^n)}{\eps} \right)\mu^n(\rho^n)$
converges to $\chi\left( \frac{\mu(\rho)}{\eps} \right)\mu^n(\rho^n)$ in $\mathcal{C}^{1/4}$, we have 
$$
\NI \to\iint \phi \mu(\rho) u_x \chi\left( \frac{\mu(\rho)}{\eps} \right) \, dtdx \qquad \text{ when } n \to +\infty.
$$
On the other hand, we have 
$$
| \NII | \lesssim \left\| \mu^n(\rho^n) u^n_x \left( 1 - \chi\left( \frac{\mu^n(\rho^n)}{\eps} \right) \right) \right\|_{L^2_{t,x}}
\leq \sup_{[0,2\eps]} \sqrt{\mu^n} \left\| \sqrt{\mu^n(\rho^n)} u^n_x \right\|_{L^2_{t,x}} \leq \sup_{[0,2\eps]} \sqrt{\mu^n} \Escr_0 
\overset{\eps \to 0, n \to \infty}{\longrightarrow} 0.
$$
Combining the estimates for $\NI$ and $\NII$ yields
$$
\iint \phi \mu^n(\rho^n) u^n_x \, dtdx \to\iint \phi \mu(\rho) u_x \, dtdx.
$$

\vskip.15cm 

\noindent
{\bf 8. Final conclusion} We have established that, as $n \to +\infty$ and in the sense of distributions,
$\rho^n \to \rho$, $\rho^n u^n \to \rho u$, $\rho^n (u^n)^2 \to \rho u^2$ and $\mu^n(\rho^n) u^n_x \to \mu(\rho) u_x$.
Furthermore, by using the upper bound on $\rho^n$, it also follows that $p(\rho^n) \to p(\rho)$. Therefore, we can pass to the limit 
in all the terms involved in the Navier-Stokes equations and we conclude that the limit $(\rho,u)$ is indeed a weak solution. 
\end{proof}


\subsection{Existence theory for cavitating solutions ($\kappa=0$ and $\DD=\RR$)}

We now generalize our analysis to the real line. 

\begin{theorem}[Cavitating finite energy solutions to the Navier-Stokes system] 
\label{guillemot2bis}
Consider the Navier-Stokes system \eqref{eq:101-NS} posed on $\RR$ with prescribed initial data $(\rho_0,u_0)$ with finite energy $\Escr_0$,
finite effective energy $\Escrt_0$, and finite mass $\mathscr{M}_0$. Assume that, for some $a>0$,
$$
\rho^a \gtrsim \mu(\rho) \gtrsim \rho \qquad \text{for $\rho$ small.}
$$
Then, there exists a global-in-time solution  $(\rho, u): [0, +\infty) \times \TT \to [0, +\infty) \times \RR$ to \eqref{eq:101-NS}, 
which possibly contains vacuum regions and  
such that $(\EDscr)$ holds with, furthermore, $\rho \in L^\infty_{t,x}$ and $\mu(\rho) \in \mathcal{C}^{1/4}_{t,x}$.
\end{theorem}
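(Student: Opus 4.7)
The plan is to follow the strategy of the proof of Theorem~\ref{guillemot2}, constructing cavitating real-line solutions as limits of non-cavitating solutions supplied by Theorem~\ref{guillemot1}. The necessary adaptations are dictated by the real-line geometry (where $\rho_\star=0$ must be preserved and total mass replaces periodicity) and by the fact that the hypothesis $\rho_0|u_0|^{2+s}\in L^1$ is dropped in favor of $\mu(\rho)\gtrsim\rho$ near vacuum (together with $\mu(\rho)\lesssim\rho^a$). One approximates the data by $(\rho_0^n,u_0^n)$ with $\rho_0^n>0$ everywhere, $\rho_0^n\to 0$ at infinity, and $(\rho_0^n,u_0^n)\to(\rho_0,u_0)$ in the finite energy topology, and one regularizes the viscosity as $\mu^n=\mu+1/n$. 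Theorem~\ref{guillemot1} then supplies non-cavitating solutions $(\rho^n,u^n)$ satisfying the energy--dissipation estimate $(\EDscr)$ uniformly, with uniformly bounded total mass $\Mscr_0^n\to\Mscr_0$.

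Most of the estimates from the proof of Theorem~\ref{guillemot2} adapt directly. The uniform upper bound $\sup_{t,x}\rho^n\le C$ is now derived as in Step~4 of the proof of Theorem~\ref{guillemot1} in the real-line case, replacing the bound $\inf_\TT\rho\le\Mscr_0/|\TT|$ by the local-in-space observation that, by the renormalized energy, for every $x_0$ there exists $x\in[x_0-1,x_0+1]$ with $\rho^n(t,x)$ bounded. The uniform H\"older estimate $\mu(\rho^n)\in\mathcal{C}^{1/4}_{t,x}$ and the pseudo-momentum estimates of type \eqref{chardonneret1}--\eqref{chardonneret3} for $m_\sharp^n=\mu_\sharp(\rho^n)u^n$ carry over verbatim, after localizing any global $L^p_x$ integration. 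An application of the Aubin--Lions lemma then produces a limit $(\rho,u)$ with $u^n\to u$ almost everywhere on $\{\rho>0\}$ and $u^n_x\rightharpoonup u_x$ weakly in $L^2_{t,x}(O_\eps)$ for each $O_\eps=\{\mu(\rho)>\eps\}$.

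The main obstacle, and the only place where the specific lower bound $\mu(\rho)\gtrsim\rho$ is used, lies in passing to the limit in the momentum flux $\rho^n(u^n)^2$ in the cavitating regions. On the torus this was handled via Lemma~\ref{loriot}, whose Mellet--Vasseur-type propagation argument crucially requires the initial integrability $\rho_0|u_0|^{2+s}\in L^1$, which is no longer assumed. The alternative proposed here is to invoke the hyperbolic higher-integrability of the velocity (in the spirit of Step~4 of the finite energy method presented in Section~\ref{sec:1}): for the subquadratic test-function $\psi(v)=v|v|$, the entropy flux $q^\psi(\rho,u)$ of the associated entropy pair is non-negative and behaves like $\rho|u|^3$, and integrating the entropy balance law \eqref{eq:20} in $x$ along half-lines yields a uniform bound of the form
\[
\sup_{x\in\RR}\int_0^T q^\psi(\rho^n,u^n)(t,x)\,dt\;\leq\;C(T,\Escr_0,\Escrt_0,\Mscr_0).
\]
The lower bound $\mu(\rho)\gtrsim\rho$ (and the upper bound $\mu(\rho)\lesssim\rho^a$) is exactly what is needed in order to control the right-hand side $Z^\eps[\rho^\eps,u^\eps]$ of \eqref{eq:20} uniformly by quantities already bounded by the energy dissipation. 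Combined with the finite-mass hypothesis and Fubini, this yields the missing equi-integrability of $\rho^n(u^n)^2$ at large $|u^n|$, and the splitting argument of Step~6 of the proof of Theorem~\ref{guillemot2} then goes through. The remaining steps of that proof---convergence of the viscous flux $\mu(\rho^n)u^n_x$ to $\mu(\rho)u_x$ in the sense of distributions via the Hölder regularity of $\mu(\rho^n)$, and assembly of the weak formulation---transfer without modification.
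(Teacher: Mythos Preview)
Your overall strategy is correct and aligns with the paper's: the new ingredient replacing Lemma~\ref{loriot} is indeed a hyperbolic higher-integrability estimate for the velocity obtained from an entropy with nonnegative flux, and the remainder of the proof of Theorem~\ref{guillemot2} transfers as you describe. Two points, however, need sharpening.

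First, the claimed bound $\sup_x \int_0^T q^\psi(\rho^n,u^n)(t,x)\,dt \le C$ is not what one actually obtains. Integrating the entropy balance over a half-line $(-\infty,y)$ and then by parts leaves a pointwise-in-$y$ boundary contribution $\int_0^T \etat_m(\rho,u)\,\mu(\rho)\,u_x\big|_{x=y}\,dt$, which is not uniformly bounded in $y$. The paper instead multiplies by $\operatorname{sign}(x-y)$ and integrates over $(x,y)\in\RR\times K$ (equivalently, averages this boundary term over $y\in K$), which yields the local bound $\int_0^T\!\int_K(\rho|u|^{2+a}+\rho^{\gamma+a\theta})\,dtdx\le C(T,K,\Escr_0,\Escrt_0,\Mscr_0)$ rather than a sup bound. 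That is precisely what is needed for the truncation argument in Step~6.

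Second, and more substantively, the paper does not use $\psi(v)=v|v|$ but the $a$-adapted pairs $\psi(v)=|v|^{a+1}$ and $\psi(v)=v|v|^a$, where $a\in(0,1]$ is exactly the exponent from the hypothesis $\mu(\rho)\lesssim\rho^a$. With the odd entropy $\etat^a:=\eta^{v|v|^a}$ the boundary contribution produces a term $\int_K\mu(\rho)|u|^{2a}$, which by H\"older and $\mu\lesssim\rho^a$ is controlled by $(\int_K\rho u^2)^a$. With your choice $\psi=v|v|$ one faces instead $\int_K\mu(\rho)|u|^2$, and for small $a$ the bound $\mu\lesssim\rho^a$ does not reduce this to the energy-controlled quantity $\rho u^2$; there is no small $\eps$ here to absorb it as in Proposition~\ref{grebe}. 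Moreover, the paper does a preliminary step with the even entropy $\eta^{|v|^{a+1}}$ to obtain a weighted dissipation bound $\int_0^T\!\int \rho^{(a-1)\theta}\langle u\rho^{-\theta}\rangle^{a-1}\mu(\rho)u_x^2\,dtdx\le C$, which in turn handles one of the interior terms arising in the flux estimate for $\etat^a$. This is the place where the lower bound $\mu\gtrsim\rho$ enters: it is used to reduce $\int\mu(\rho)\rho^{2a\theta-2}\rho_x^2$ to $\int(\mu(\rho)^2/\rho^3)\rho_x^2$, controlled by $\Escrt_0$. So your invocation of both hypotheses on $\mu$ is correct in spirit, but the actual mechanism is a two-step argument with $a$-adapted entropies rather than a single computation with $\psi=v|v|$.
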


The weak solutions with cavitation above are understood in the sense of distributions, along the lines of Definition~\ref{def:weakcav}, above. 

\begin{proof} Like Theorem~\ref{guillemot2}, this theorem is established by approximation from the non-cavitating case treated in Theorem~\ref{guillemot2bis}.
We will not repeat herre all the steps already described in the proof of Theorem~\ref{guillemot2}, and we only emphasize here the novel argument required in the present proof, that is, 
the higher integrability property for the velocity. The following argument relies on a notation which will be introduced 
at the beginning of Section~\ref{sec:43}, below. 

\vskip.15cm 

\noindent
{\bf 1. A new set of entropies.} Given a parameter $a \in [0,1]$, we consider the two entropy--entropy flux pairs 
obtained by choosing the function $\psi$ to be $|v|^{a+1}$ and $v|v|^a$ in \eqref{eq:4200}, that is,  
$$
\eta^a := \eta^{|v|^{a+1}}, \qquad q^a := q^{|v|^{a+1}}, \qquad \etat^a := \eta^{v|v|^a}, \qquad \widetilde{q}^a := q^{v|v|^a}.
$$
Using \eqref{eq:chidef}--\eqref{eq:4200}, these entropies can be checked to satisfy the following pointwise bounds:
\bel{lemmaboundbis}
\aligned
& |\eta^a| \lesssim \rho^{(a+1)\theta + 1} + \rho |u|^{a+1},
\qquad \qquad
&& \left| \etat^a \right| \lesssim \rho^{(a+1)\theta+1} + \rho |u|^{a+1},
\\
& \left| q^a \right| \lesssim \rho^{\gamma + a \theta} + \rho |u|^{2+a},
&& \widetilde{q}^a \gtrsim \rho^{\gamma + a \theta} + \rho |u|^{2+a},
\\
& \left| \eta^a_m \right| \lesssim \rho^{a\theta} + |u|^a,
&& \left| \etat^a_m \right| \lesssim \rho^{a\theta} + |u|^a,
\\
& \eta_{mu}^a \gtrsim \rho^{(a-1)\theta} \langle u \rho^{-\theta} \rangle^{a-1},
&& \left| \etat^a_{m u} \right| \lesssim  \rho^{(a-1)\theta} \langle u \rho^{-\theta} \rangle^{a-1},
\\
& \left| \eta^a_{m \rho} \right| \lesssim \rho^{a \theta - 1}, 
&& \left| \etat^a_{m \rho} \right| \lesssim \rho^{a\theta - 1}.
\\
\endaligned
\ee
In particular, we observe that $\widetilde{q}^a$ is positive. 

\vskip.15cm 

\noindent
{\bf 2. A bound via the entropy dissipation of $\eta^a$.} 
We will prove that under the assumptions of the theorem, a solution of Navier-Stokes satisfies
$$
\int_0^T \int \rho^{(a-1)\theta} \langle \frac{u}{\rho^\theta} \rangle^{a-1} \mu(\rho) u_x^2 \, dtdx \lesssim C(T,\mathscr{M}_0,\Escr_0,\Escrt_0)
$$
Start by writing the conservation law for the entropy $\eta^a$:
$$
(\eta^a)_t + (q^a)_x = \eta^a_m (\mu(\rho) u_x)_x.
$$
Integrating it in time on $[0,T]$ and in space over $\RR$ gives
\begin{equation*}
\begin{split}
\int \eta^a(x,T) \, dx - \int \eta^a(x,0) \, dx & = \int_0^T \int \eta^a_m (\mu(\rho) u_x)_x\, dx,ds \\
& = - \int_0^T \int \eta^a_{mu} \mu(\rho) u_x^2\, dtdx - \int_0^T \int \eta^a_{m\rho} \rho_x \mu(\rho) u_x\, dtdx,
\end{split}
\end{equation*}
where the second equality follows by integration by parts. Using the bounds~(\ref{lemmaboundbis}) leads to
\begin{equation*}
\begin{split}
\int_0^T \int \rho^{(a-1)\theta} \langle \frac{u}{\rho^\theta} \rangle^{a-1} \mu(\rho) u_x^2 \, dtdx & \lesssim
\int \left( \rho^{(a+1)\theta + 1} + \rho |u|^{a+1}\right) (T) \, dx + \int \left( \rho^{(a+1)\theta + 1} + \rho |u|^{a+1}\right) (0) \, dx \\
& \qquad \qquad + \int_0^T \int \rho^{a \theta - 1} \mu(\rho) \left| \rho_x u_x \right| \, dx\,dt.
\end{split}
\end{equation*}
The two terms in the above right-hand side can be bounded by using the observation that
that $\rho^{(a+1)\theta + 1} + \rho |u|^{a+1} \lesssim \rho + \rho u^2 + \rho^\gamma$; for the third one, we use Cauchy-Schwarz inequality. This gives
\begin{equation*}
\begin{split}
\int_0^T \int \rho^{(a-1)\theta} \langle \frac{u}{\rho^\theta} \rangle^{a-1} \mu(\rho) u_x^2 \, dx\,dt & \lesssim \Escr_0 + \mathscr{M}_0
+ \int_0^T \int \mu(\rho) u_x^2 \, dtdx + \int_0^T \int \frac{\mu(\rho)}{\rho^2} \rho^{2a \theta} \rho_x^2\, dx\,dt.
\end{split}
\end{equation*}
The last term in the above right-hand side remains to be controlled. Since $\mu(\rho) \gtrsim \rho$, it is controlled by $\int \int \frac{\mu(\rho)^2}{\rho^3}(\rho_x)^2\, dx\,dt$.
We therefore get
$$
\int_0^T \int \rho^{(a-1)\theta} \langle \frac{u}{\rho^\theta} \rangle^{a-1} \mu(\rho) u_x^2 \, dtdx  \lesssim \Escr_0 + \mathscr{M}_0+ \Escrt_0 \langle T \rangle,
$$
which is the desired result.

\vskip.15cm

\noindent
{\bf 3. A bound via the entropy flux of $\widetilde{\eta}^a$.} 
We prove here the uniform bound
$$
\int_0^T \int_K \left( \rho |u|^{2+a} + \rho^{\gamma+a\theta} \right)\, dtdx \lesssim C(K,M_0,\Escr_0,\Escrt_0,T).
$$
which is the desired higher integrability for the velocity.
The proof is actually parallel to that of Proposition~\ref{grebe}; the difference being that, here, $\epsilon=0$, which greatly simplifies the estimates.
Thus we can skip the details and refer to the proof of Proposition~\ref{grebe}.
Begin by writing the conservation law for the entropy $\etat^a$
\begin{equation*}
\underbrace{\partial_t \etat^a(\rho,u)}_{\NI} + \underbrace{\partial_x \widetilde{q}^a(\rho,u)}_{\NII} = 
\underbrace{\etat^a_m(\rho,u) (\mu^\eps(\rho) u_x)_x}_{\NIII} 
\end{equation*}
Multiply this equation by $\sign(x-y)$ and integrate it on $(t,x,y) \in [0,T] \times \RR \times K$.

By~(\ref{lemmaboundbis}), the contribution of $\NI$ can be bounded by $\displaystyle \dots \lesssim C( \Escr_0,\mathscr{M}_0)$. Still by~\eqref{lemmaboundbis}, the contribution of $\NII$ is
$$ \dots \gtrsim - \int_0^T \int_K \left( \rho |u|^{2+a} + \rho^{\gamma+a\theta} \right) \,dy\,ds.$$

We are left with $\NIII$. Its contribution can be bounded with the help of the bounds~\eqref{lemmaboundbis} and Cauchy-Schwarz inequality:
\begin{equation*}
\begin{split}
& \left| \int_0^T \int_\RR \int_K \sign(x-y) \, \NIII \, dy\, dx\, ds \right|\\
& \qquad \qquad  \lesssim \int_0^T \int_\RR \left( \mathbbm{1}_K (\rho^{a\theta} + |u|^a)\mu(\rho)|u_x| 
+ \mu(\rho) \rho^{a\theta - 1} |\rho_x| |u_x|
+ \mu(\rho) \rho^{(a-1)\theta} \langle \frac{u}{\rho^\theta} \rangle^{a-1} u_x^2 \right)\, dtdx\\
& \qquad \qquad  \lesssim \int_0^T \int_\RR \left( \underbrace{\mu(\rho)|u_x|^2}_{\NIII_1}
+ \underbrace{\mu(\rho) \rho^{(a-1)\theta} \langle \frac{u}{\rho^\theta} \rangle^{a-1} u_x^2}_{\NIII_2}
+ \underbrace{\mu(\rho) \rho^{2a\theta - 2}|\rho_x|^2}_{\NIII_3}
+ \underbrace{\mathbbm{1}_K \mu(\rho) |u|^{2a}}_{\NIII_4}
+ \underbrace{\mathbbm{1}_K \mu(\rho) \rho^{2a\theta}}_{\NIII_5} \right)\, dtdx
\end{split}
\end{equation*}
We examine each of the terms appearing above: $\NIII_1$ is obviously bounded by the energy; $\NIII_2$ was controlled earlier;
$\NIII_3$ was already treated in Step 2; and the term $\NIII_5$ can be controlled as in Proposition~\ref{grebe}. All in all,
$$
\int_0^T \int \left( \NIII_1 + \NIII_2 + \NIII_3 + \NIII_5\right)\, dx\,dt \lesssim C(\Escr_0,\Escrt_0,T,K).
$$
Finally, H\"older's inequality gives
$$
\int_0^T  \int  \NIII_4 \, dx\,dt \lesssim \left( \int_0^T \int \mu(\rho)^{1/a} |u|^{2}\, dx\,dt \right)^a \lesssim \left( \int_0^T \int \rho |u|^2 \, dx\,dt \right)^a \lesssim T \Escr_0,
$$
where the prior-to-last inequality follows from the assumption $\mu(\rho) \lesssim \rho^a$. 
\end{proof}


\subsection{Existence theory for cavitating solutions ($\kappa>0$ and $\DD=\RR$)}

We now consider the full system with both viscosity and capillarity terms. 

\begin{theorem}[Cavitating finite energy solutions to the Navier-Stokes-Korteweg system] 
\label{guillemot3}
Assume the viscosity and capilarity coefficients $\mu,\kappa > 0$
satisfy the strong coercivity condition (SC)
and, that additionally, the pair $(1,\kappa)$ also satisfies (SC) and the following growth conditions hold: 
\begin{equation}
\label{mesange}
\mu(\rho) \lesssim \rho^{2/3}, \qquad  \quad \kappa(\rho) \lesssim \frac{\mu(\rho)^2}{\rho^3}, \qquad \mbox{for small } \rho >0.
\end{equation}
Consider the Navier-Stokes-Korteweg system \eqref{eq:101} posed on the real line with $\rho_\star \geq 0$
with prescribed initial data $(\rho_0,u_0)$ with finite physical energy 
$\Escr_0$ and effective energy $\Escrt_0$. Then, there exists a global solution of (NSK) (in the sense of distributions) such that $(\EDscr)$ holds
with, furthermore, $\rho \in L^\infty_{t,x}$ and $\mu(\rho) \in \cap \mathcal{C}^{1/4}_{t,x}$.
\end{theorem}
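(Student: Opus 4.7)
The approach is to approximate by non-cavitating solutions via Theorem~\ref{guillemot1} and pass to the limit, following the scheme of Theorems~\ref{guillemot2} and~\ref{guillemot2bis}. The new difficulty is identifying the limit of the capillarity term $K^n[\rho^n]_x$, and the auxiliary assumption that $(1,\kappa)$ also satisfies (SC) is designed precisely to handle it.

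\emph{Approximation and uniform bounds.} I would regularize by setting $\mu^n(\rho) := \mu(\rho) + n^{-1}\rho^{1/2}$ and $\rho^n_0 := \rho_0 + n^{-1}$, so that $(\mu^n,\kappa)$ satisfies the non-cavitation condition (NC) of \eqref{behave} for each $n$. One checks that $(\mu^n,\kappa)$ still verifies (SC) uniformly in $n$ (using the tame bound $\kappa \lesssim \mu^2/\rho^3$) and that $\Escr_\star[\rho_0^n,u_0^n] + \Escrt_\star[\rho_0^n,u_0^n]$ remains uniformly bounded. Theorem~\ref{guillemot1} then produces global non-cavitating solutions $(\rho^n,u^n)$ satisfying $(\EDscr)$ uniformly. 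The additional (SC) hypothesis applied to $(1,\kappa)$ supplies the \emph{non-degenerate} control
\[
\int_0^T\!\!\!\int_\RR \frac{\kappa(\rho^n)}{\rho^n}\left((\rho^n_{xx})^2 + \frac{(\rho^n_x)^4}{(\rho^n)^2}\right)\,dxdt \lesssim 1,
\]
which does not deteriorate as $\mu^n \to 0$ at the vacuum. The $L^\infty_{t,x}$ upper bound on $\rho^n$ and the $\mathcal{C}^{1/4}_{t,x}$ regularity of $\mu(\rho^n)$ follow as in Theorem~\ref{guillemot2bis}, the assumption $\mu\lesssim\rho^{2/3}$ being used to keep the momentum flux terms under control.

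\emph{Higher integrability of the velocity.} I would adapt Steps 2--3 of the proof of Theorem~\ref{guillemot2bis} and test the entropy balance laws for $\eta^a$ and $\etat^a$ against $\sign(x-y)$. The new capillarity contributions, of the schematic form $\eta^a_m K^n[\rho^n]_x$ in the entropy dissipation, are absorbed by Cauchy--Schwarz using the non-degenerate estimate above together with the pointwise bounds on $\eta^a_m,\etat^a_m$ from \eqref{lemmaboundbis}. This yields $\rho^n|u^n|^{2+a}$ and $(\rho^n)^{\gamma + a\theta}$ in $L^1_{t,x\loc}$ for some $a>0$, which is what is needed to pass to the limit in $\rho^n(u^n)^2$.

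\emph{Passage to the limit --- the main obstacle.} The convergences of $\rho^n u^n$, $\rho^n(u^n)^2$, and $\mu^n(\rho^n)u^n_x$ are handled exactly as in Theorems~\ref{guillemot2}--\ref{guillemot2bis}. The key new step is $K^n[\rho^n]$. Defining $\Phi$ by $\Phi'(\rho) = \sqrt{\kappa(\rho)/\rho}$, set $w^n := \Phi(\rho^n)_x = \sqrt{\kappa(\rho^n)/\rho^n}\,\rho^n_x$. Then $\sqrt{\rho^n}\,w^n \in L^\infty_t L^2_x$ by the physical energy, the non-degenerate estimate above gives $\Phi(\rho^n) \in L^2_t H^2_{x,\loc}$, and the continuity equation provides a time-derivative bound for $\Phi(\rho^n)$ in a negative-order Sobolev space. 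Aubin--Lions then yields strong convergence of $\sqrt{\kappa(\rho^n)}\,\rho^n_x$ in $L^2_{t,x\loc}$. Combined with the uniform convergence $\rho^n \to \rho$ on compacts (from the $\mathcal{C}^{1/4}$ bound), this identifies each constituent of $K^n[\rho^n]$ in the limit: $\kappa(\rho^n)(\rho^n_x)^2$ converges strongly in $L^1_{t,x\loc}$, while $\rho^n\kappa(\rho^n)\rho^n_{xx}$ is recovered by a weak-strong product using the $L^2$-weak limit of $\sqrt{\mu(\rho^n)\kappa(\rho^n)/\rho^n}\,\rho^n_{xx}$ from $(\EDscr)$ against the uniform limit of $\sqrt{\rho^n/\mu(\rho^n)}$ on $\{\rho > 0\}$. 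The tame condition $\kappa \lesssim \mu^2/\rho^3$ is what ensures the compatibility between these two types of control throughout, and the hardest verification will be pinning down the behaviour of these limits on the cavitation set $\{\rho = 0\}$, where the non-degenerate $(1,\kappa)$ estimate is essential.
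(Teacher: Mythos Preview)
Your proposal contains a genuine gap: the ``non-degenerate'' control
\[
\int_0^T\!\!\int_\RR \frac{\kappa(\rho^n)}{\rho^n}\Big((\rho^n_{xx})^2 + \frac{(\rho^n_x)^4}{(\rho^n)^2}\Big)\,dxdt \lesssim 1
\]
does not follow from the (SC) hypothesis for $(1,\kappa)$. That hypothesis is a \emph{coercivity} statement---it says that the functional $\Dcalh_1[\rho]$ (computed with $\mu\equiv 1$) dominates the integral above. To turn it into an a~priori bound you would need the effective energy law to control $\int_0^T\Dcalh_1[\rho^n]$; but the effective energy identity for the approximants controls $\int_0^T\Dcalh_{\mu^n}[\rho^n]$ with the \emph{actual} viscosity $\mu^n$, not with $\mu\equiv 1$. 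At best $(\EDscr)$ gives $\frac{1}{n}\int\frac{\kappa}{\rho}\rho_{xx}^2\lesssim 1$, which degenerates. Since your higher-integrability step and your entire passage to the limit in $K^n$ are built on this estimate, the scheme as written does not close. (A secondary issue: your perturbation $\mu+n^{-1}\rho^{1/2}$ need not satisfy (NC) when $\mu\lesssim\rho^{2/3}$ and $\kappa$ carries no lower bound, so Theorem~\ref{guillemot1} may not even apply.)

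The role of the $(1,\kappa)$ (SC) assumption is in fact more modest. Because $\Dcalh_\mu[\rho]$ and its (SC) lower bound are both \emph{linear} in $\mu$, (SC) for $(\mu,\kappa)$ and for $(1,\kappa)$ combine to give (SC) for $(\mu+1/n,\kappa)$ uniformly in $n$; this is what makes Theorem~\ref{guillemot1} applicable to the approximants with viscosity $\mu+1/n$. For the limit in the capillarity term no non-degenerate bound is needed: one rewrites $\rho^n\kappa(\rho^n)\rho^n_{xx}=(\rho^n\kappa(\rho^n)\rho^n_x)_x-(\rho\kappa)'(\rho^n)(\rho^n_x)^2$, gets $L^2_tH^1_x$-convergence of $\rho^n$ \emph{away from} $\{\rho=0\}$ via Aubin--Lions (using only the $\mu\kappa\rho^{-1}\rho_{xx}^2$ term of $(\EDscr)$, which is non-degenerate there), and on $\{\rho<\eps\}$ bounds $\int\kappa(\rho^n)(\rho^n_x)^2$ by Cauchy--Schwarz against the $(\EDscr)$ quantity $\frac{\mu\kappa}{\rho^3}\rho_x^4$---the tame condition $\kappa\lesssim\mu^2/\rho^3$ makes the prefactor $\sup_{[0,\eps]}(\rho^3\kappa/\mu)^{1/2}\lesssim\sup_{[0,\eps]}\mu^{1/2}$ vanish as $\eps\to 0$. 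The higher integrability of the velocity is obtained by the argument of Proposition~\ref{grebe} with $\eps=1$, where the capillarity contributions are likewise absorbed using (TC) and the $\mu$-weighted bounds of $(\EDscr)$ rather than any bound with $\mu\equiv 1$.
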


\begin{definition} Solutions of (NSK) are understood in the sense of distributions.
As already explained in the case of Navier-Stokes, the uniform estimate $(\EDscr)$ ensures that $\rho$, $\rho u$, $\rho u^2$, $p(\rho)$ and $\mu(\rho) u_x$ are locally integrable.
It thus simply remains to give a meaning to the capillarity term $K$. It suffices to write it
$$
K[\rho] = \big( \rho \kappa(\rho) \rho_x \big)_x  - \frac{1}{2} \, \big( \rho \kappa'(\rho) + 3 \kappa(\rho) \big) \, \rho_x^2,
$$
and to notice that, since $\rho$ is bounded, the finiteness of $\mathscr{E}$ implies that $\rho \kappa(\rho) \rho_x$ and $\big( \rho \kappa'(\rho) + 3 \kappa(\rho) \big) \, \rho_x^2$ are locally integrable.
\end{definition}

\begin{proof}
{\bf 1. Approximation by the non-cavitating case.}
We replace the initial data by $\left( \rho_0 + 1/n , u_0 \right)$ and 
we replace the viscosity coefficient by $\mu(\rho)+ 1/n$. This gives a sequence of solutions $(\rho^n,u^n)$ which, we would like to show, is compact and converges to a solution of the
desired equation. It can be done in a very similar way to the proof of Theorem~\ref{guillemot3}, we only describe here two new 
technical ingredients: 
the derivation of improved integrability for the velocity $u$ (which was proved in Lemma~\ref{loriot} in the purely viscous case, provided additional integrability already held initially), 
and the passage to the limit in the capillarity term.

\vskip.15cm 

\noindent
{\bf 2. Higher-integrability property for the velocity.} We only treat here the case $\rho_\star > 0$, the case $\rho_\star = 0$ can be dealt with in a similar fashion.
We wish to prove that, for any compact $K$, $\int_0^T \int \rho^n |u^n|^3 \, dtdx$ is uniformly bounded in $n$. This is quite close to the statement of
Proposition~\ref{grebe} (with $\eps=1$), and indeed the proofs are almost identical; note that the assumption~(\ref{mesange}) is needed here. 
The only difference is a new term in the last line of~(\ref{circaete}), below, specifically
$$
\int_0^T \int \frac{1}{n} |u|^2 \mathbbm{1}_K \, dtdx.
$$ 
We indicate here how to treat this error term (following an argument in~\cite{CP}) and we refer to Proposition~\ref{grebe} for the rest of the proof and the more 
general context. Observe first that the bound on $\Escr$ implies the existence of $R(\Escr_0),r(\Escr_0)$ such that: 
for any $t,x_0$, there exists a set $I$ of length one, contained in $[x_0-R,x_0+R]$ such that $\rho(t,x) \geq r$ on $I$. 
Still by the bound on $\Escr$, this implies that there exists $R(\Escr_{0}),L(\Escr_{0})$ such that:
for any $t,x_0$, there exists $x \in [x_0-R,x_0+R]$ such that $|u(t,x)| \leq L$. Now, pick any $x_0$ and find an associated $x$ with this property.
Then
$$
|u(x_0)| \leq |u(x)| + \int_{x_0}^{x} |u_x| \leq L(\Escr_0) + R(\Escr_0)^{1/2} \left( \int |u_x|^2 \right)^{1/2} 
\lesssim C(\Escr_0) + \sqrt{n} C(\Escr_0).
$$
This $L^\infty$ bound on the velocity enables us to treat the term we are discussing:
$$
\int_0^T \int \frac{1}{n} |u|^2 \mathbbm{1}_K \, dtdx 
= \int_0^T \mathbbm{1}_K \frac{1}{n} |u|^2\, dtdx \lesssim \frac{1}{n} \left( C(\Escr_0) + n C(\Escr_0)\right) \lesssim C(\Escr_0).
$$

\vskip.15cm 

\noindent
{\bf 3. Passing to the limit in the capillarity term.} Recall that, as was established in Theorem~\ref{guillemot2}, 
$\rho^n$ converges to $\rho$ in $\mathcal{C}^{1/4}_{t,x}$ away from $\{ \rho = 0 \}$.
Taking advantage of the bound on $\rho_{xx}^2$ which becomes available since $\kappa>0$, it is easy to show, via the Aubin-Lions lemma, that
$\rho^n$ converges to $\rho$ in $L^2_t H^1_x$ away from $\{\rho = 0\}$.

The capillarity term reads $K[\rho]_x$, with $K[\rho] = \rho \kappa(\rho) \rho_{xx} + \frac{1}{2} \left( \rho \kappa'(\rho) - \kappa(\rho) \right) \rho_x^2$.
We want to show that $K[\rho^n]$ converges as $n \to \infty$ to $K[\rho]$ in the sense of distributions. We focus on the hardest term and show that
$\rho^n \kappa(\rho^n) \rho_{xx}^n$ converges to $\rho \kappa(\rho) \rho_{xx}$ . Write first
$$
\rho^n \kappa(\rho^n) \rho_{xx}^n = ( \rho^n \kappa(\rho^n) \rho_x^n )_x - (\rho \kappa)'(\rho^n) (\rho_x^n)^2.
$$
Once again, we focus on the most difficult term, namely $(\rho \kappa)'(\rho^n) (\rho_x^n)^2$. Taking $\chi$ to be the indicator function of the unit ball,
and $K$ a compact set,
\begin{equation*}
\begin{split}
& \int_0^T \int_K \left| \kappa(\rho^n) (\rho^n_x)^2 - \kappa(\rho) \rho_x^2 \right| \, dtdx \\
& \lesssim \int_0^T \int_K \left(1- \chi\left(\frac{\rho}{\eps} \right)\right)  \left| \kappa(\rho^n) (\rho^n_x)^2 - \kappa(\rho) \rho_x^2 \right| \, dtdx 
+ \int_0^T \int_K \chi\left(\frac{\rho}{\eps} \right) \kappa(\rho) \rho_x^2 \, dtdx \\
& \quad + \int_0^T \int_K \chi \left(\frac{\rho}{\eps} \right) \kappa(\rho^n) (\rho^n_x)^2 \, dtdx.
\end{split}
\end{equation*}
The first term in the above right-hand side goes to zero as $n$ goes to infinity, since $\rho^n$ converges to $\rho$ in $L^2 H^1$ away from $\{\rho = 0 \}$. 
As for the second term, it can be bounded using H\"older's inequality and~(\ref{mesange})
\begin{equation*}
\begin{split}
\int_K \int_0^T \int \chi\left(\frac{\rho}{\eps} \right)\kappa(\rho) \rho_x^2 \, dtdx 
& \lesssim \left( \int_0^T \int_K \chi\left(\frac{\rho}{\eps} \right)^2 \kappa(\rho)^2 \rho_x^4 \, dtdx \right)^{1/2} 
\\
& \lesssim \sup_{\rho \in [0,\eps]} \left( \rho^3 \frac{\kappa(\rho)}{\mu(\rho)} \right)^{1/2} 
\left( \int_0^T \int_K \frac{\kappa(\rho) \mu(\rho)}{\rho^3} \rho_x^4 \, dtdx \right)^{1/2}
\\
& 
\lesssim  \sup_{\rho \in [0,\eps]} \left( \rho^3 \frac{\kappa(\rho)}{\mu(\rho)} \right)^{1/2} \Escrt_0, 
\end{split}
\end{equation*}
in which the upper bound now tends to zero with $\eps \to 0$, thanks to 
the assumptions $\kappa(\rho) \lesssim \frac{\mu(\rho)^2}{\rho^3}$ and $\mu(\rho) \rightarrow 0$ as $\rho \rightarrow 0$.
Finally, the third term can be bounded similarly since for $n$ big enough, $\rho^n < \frac{\eps}{2}$ if $\rho <\eps$. Therefore,
as $n \to +\infty$ 
$$
\int_0^T \int_K \left| \kappa(\rho^n) (\rho^n_x)^2 - \kappa(\rho) \rho_x^2 \right| \, dtdx \to 0,
$$
which is the desired result.
\end{proof}


\section{Finite energy solutions to the Euler system} 
\label{sec:4}

\subsection{Higher integrability property for the pressure}
\label{hipt}

We consider in this section solutions of the system (NSK)$_\epsilon$ satisfying the uniform bound~(\ref{cigogne2}), where we recall that 
$$
\mu^\eps (\rho) = \epsilon \mu(\rho), \qquad \quad \kappa^\eps(\rho) = \delta(\epsilon) \kappa(\rho).  
$$
For simplicity in the notation, we drop the superscript $\epsilon$ and simply write $(\rho,u)$ instead of $(\rho^\eps,u^\eps)$.
In this section, we assume the following: 
\begin{itemize}
\item The equation is set on $\RR$.
\item The conditions (TC) (defined in~(\ref{defTC})) and (SC) (defined in~(\ref{eq:203bis})) hold.
\item The fluid is perfect and polytropic, that is, $p(\rho) = \frac{(\gamma-1)^2}{4\gamma} \rho^\gamma$ with $\gamma>1$.
\end{itemize}
As it is usual, we set  $\theta : = \frac{\gamma-1}{2}$. 

\begin{lemma}
\label{papillon}
Under the above assumptions, there exists a constant\footnote{The constant also depends on $\mu$, $\kappa$, but we consider these as fixed.} 
$C(\Escr_0,\Escrt_0) >0$ such that
\be
\eps \mu(\rho) \langle \rho \rangle^\theta \leq C(\Escr_0,\Escrt_0) 
\label{fourmi0}
\ee
with
\be
\label{fourmi1}
\eps \frac{\mu(\rho) \rho}{\langle p(\rho) \rangle} = C(\Escr_0,\Escrt_0) \, o(1),  
\ee
where $o(1)$ denotes here a function tending to zero when $\eps \to 0$.
\end{lemma}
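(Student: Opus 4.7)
The plan is to prove~\eqref{fourmi0} by a bootstrap on the pointwise quantity $F(t,x) := \eps\mu(\rho(t,x))\rho(t,x)^\theta$, and then to deduce~\eqref{fourmi1} by an elementary case analysis based on~\eqref{fourmi0}. For each fixed $\eps>0$, the existence results of Section~\ref{sec:3} provide $\rho\in L^\infty_{t,x}$, so $\|F\|_\infty<+\infty$ (non-uniformly in $\eps$); the bootstrap will upgrade this to a bound uniform in $\eps$ depending only on $\Escr_0$ and $\Escrt_0$. The bound on $\eps\mu(\rho)\langle\rho\rangle^\theta$ then follows from the bound on $F$ by splitting into $\{\rho\leq 1\}$ (where $\mu$ is bounded by continuity and $\mu(0)=0$) and $\{\rho\geq 1\}$ (where $\langle\rho\rangle\simeq\rho$).

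The key computation starts from the fundamental theorem of calculus: since $\rho(t,\cdot)\to\rho_\star$ at $\pm\infty$,
\[
F^2(t,x) - F^2(\rho_\star) \,=\, 2\int_{-\infty}^x F \cdot \eps\,(\mu\rho^\theta)'(\rho)\,\rho_y\,dy.
\]
The hypothesis $\rho|\mu'|\lesssim\mu$ from~\eqref{hypmukappa} gives $|(\mu\rho^\theta)'(\rho)|\lesssim\mu\rho^{\theta-1}$, so the integrand is bounded by
\[
\eps^2\mu^2\rho^{2\theta-1}|\rho_y| \,=\, \Big(\frac{\eps\mu|\rho_y|}{\rho^{3/2}}\Big)\cdot\Big(\eps\mu\rho^{2\theta+1/2}\Big).
\]
Applying Cauchy-Schwarz to this product gives a bound $A\cdot B$, where $A^2 = \int \eps^2\mu^2\rho_y^2/\rho^3\,dy$ is uniformly controlled by $\Escr_0+\Escrt_0$ via the combination of physical and effective energies noted after~\eqref{en-vel}, and $B^2 = \int \eps^2\mu^2\rho^{4\theta+1}\,dy$. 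The polytropic identity $4\theta+1=\gamma+2\theta$ yields the crucial factorization $\eps^2\mu^2\rho^{4\theta+1} = F^2\cdot\rho^\gamma$, so that
\[
B^2 \,\leq\, \|F\|_\infty^2 \int \rho^\gamma\,dy \,\lesssim\, \|F\|_\infty^2\cdot \Escr_0
\]
(with the usual renormalization of $\int\rho^\gamma$ coming from $\Escr_\star$ when $\rho_\star>0$). Substituting and taking the supremum in $x$ produces the quadratic inequality $\|F\|_\infty^2 \leq F^2(\rho_\star) + C(\Escr_0,\Escrt_0)\|F\|_\infty$, which delivers the uniform bound.

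For~\eqref{fourmi1}, observe that $\langle p(\rho)\rangle\geq 1$ always and $\langle p(\rho)\rangle\gtrsim\rho^\gamma$ for $\rho\geq R$ sufficiently large. Hence on $\{\rho\leq R\}$, $\eps\mu(\rho)\rho/\langle p\rangle \leq \eps\sup_{[0,R]}(\mu\rho) = O(\eps)$, while on $\{\rho\geq R\}$,
\[
\frac{\eps\mu(\rho)\rho}{\langle p(\rho)\rangle} \,\lesssim\, \eps\mu\rho^{1-\gamma} \,=\, (\eps\mu\rho^\theta)\,\rho^{-3\theta} \,\leq\, C(\Escr_0,\Escrt_0)\,R^{-3\theta}
\]
by~\eqref{fourmi0} and the identity $\gamma+\theta-1 = 3\theta$. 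Choosing $R$ large and then $\eps$ small yields the uniform $o_\eps(1)$ bound. The main obstacle is really the bootstrap for~\eqref{fourmi0}: the polytropic factorization $\rho^{4\theta+1} = \rho^\gamma\cdot\rho^{2\theta}$ is precisely what makes $\|F\|_\infty$ (rather than $\|F\|_\infty^2$) appear on the right-hand side of the key estimate, allowing the quadratic inequality to close.
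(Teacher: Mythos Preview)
Your bootstrap argument for~\eqref{fourmi0} is correct when $\rho_\star=0$ but has a genuine gap when $\rho_\star>0$ (which the section allows; cf.\ Theorem~\ref{theo:12}). The problem is the integral $B^2=\int_\RR F^2\rho^\gamma\,dy$: since $F^2\rho^\gamma\to F(\rho_\star)^2\rho_\star^\gamma>0$ at infinity, it diverges. Your parenthetical about ``the usual renormalization of $\int\rho^\gamma$'' does not rescue this: the renormalization in $\Escr_\star$ subtracts an \emph{affine} function of $\rho$ from $\rho^\gamma$, and no analogous subtraction is available once the integrand carries the extra weight $F^2$. Integrating from $-\infty$ is simply the wrong anchor.

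The paper instead uses a \emph{local} anchor: finiteness of $\Escr_\star$ forces, for every $x_0$, a point $x^*\in[x_0-1,x_0+1]$ with $\rho(x^*)\le L(\Escr_0)$, and one integrates the derivative of $\mu(\rho)\langle\rho\rangle^\theta$ only over $[x^*,x_0]$. On that bounded interval $\int\rho^\gamma$ \emph{is} controlled (by $\Escr_0$ plus a harmless $C(\rho_\star)$), and the Cauchy--Schwarz step closes. Your argument is easily repaired the same way: integrate $(F^2)_y$ from $x^*$ to $x_0$; then $F^2(x^*)\le\eps^2 C(L)$ and $B^2\le\|F\|_\infty^2\int_{[x_0-1,x_0+1]}\rho^\gamma<\infty$, so the quadratic inequality in $\|F\|_\infty$ still closes.

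Beyond this localization issue, the two approaches differ only cosmetically. Both rest on the same Cauchy--Schwarz splitting into the effective-energy factor $\int\eps^2\mu^2\rho_x^2/\rho^3$ and the energy factor $\int\rho^\gamma$. The paper bounds $\mu(\rho)\langle\rho\rangle^\theta$ linearly (anchor plus $L^1$ bound on its derivative, hence $\eps\cdot(\,\cdot\,)\le C$); you bound $F^2$ and solve a quadratic in $\|F\|_\infty$. The bootstrap buys nothing extra here and is slightly longer. Your deduction of~\eqref{fourmi1} from~\eqref{fourmi0} is correct and in fact more detailed than the paper's.
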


\begin{proof} Thanks to the conservation property for the normalized energy, there exists $L(\Escr_0)$ such that for any $x_0$, $\inf_{[x_0-1,x_0+1]} \rho(x) \leq L$, therefore
\be
\label{fourmi2}
\inf_{[x_0-R,x_0+R]} \mu(\rho) \langle \rho \rangle^\theta \lesssim C(\Escr_0).
\ee
On the other hand, $\epsilon \partial_x \left(\epsilon \mu(\rho) \langle \rho \rangle^\theta \right)$ is uniformly bounded in $L^1_x$. On the set where $\rho\leq1$, this is clear. 
On the set where $\rho\geq 1$, by (TC) and Cauchy-Schwarz inequality,
\be
\label{fourmi3}
\aligned
\left| \int \partial_x \left( \mu(\rho) \langle \rho \rangle^\theta \right) \, dx \right| 
& \lesssim \int \mu(\rho) \rho^{\theta-1} \rho_x \, dx
\\
& \leq \frac{1}{\eps} \left( \int \frac{\eps^2 \mu(\rho)^2 |\rho_x|^2}{\rho^3} \right)^{1/2} \left( \int \rho^{\gamma} \right)^{1/2}
\leq \frac{C(\Escr_0,\Escrt_0)}{\eps},
\endaligned
\ee
Gathering the two previous inequalities gives~(\ref{fourmi0}), from which~(\ref{fourmi1})
follows.
\end{proof}

\begin{proposition}[Higher integrability of the pressure] 
Fix a compact set $K$. Under the assumptions recalled at the beginning of the present section,
for any $T>0$, there exists a constant $C(K,T,\Escr_0,\Escrt_0)$ such that
$$
\int_0^T \int_K\left( \rho p(\rho) + \rho (\rho {\kappa^\eps}'(\rho) + 5 \kappa^\eps(\rho)) \rho_x^2 \right) \, dxdt \leq C(K,T,\Escr_0,\Escrt_0).
$$
\end{proposition}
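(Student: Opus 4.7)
The proof follows the $h$--function strategy outlined in paragraph~3 of Section~1.5. Since the mass equation reads $\rho_t + (\rho u)_x = 0$, one introduces a scalar potential $h$, unique up to an additive constant, with
\[
h_x = \rho, \qquad h_t = -\rho u,
\]
and picks a non-negative test function in product form $\theta(t,x) = \phi(t)\chi(x)$, with $\phi, \chi$ smooth and compactly supported, $\phi \equiv 1$ on $[0,T]$, and $\chi \equiv 1$ on $K$. The plan is to multiply the momentum equation by $\theta h$, integrate over $(0,\infty) \times \RR$, and use the definition of $h$ to isolate the critical quantities on one side. A direct calculation based on the definition of $h$ gives the pointwise identity
\[
\rho p(\rho) = (\rho u h)_t + \big[(\rho u^2 + p(\rho) - L^\eps - K^\eps)\, h\big]_x + \rho L^\eps + \rho K^\eps.
\]
A further algebraic rearrangement of the capillarity piece yields
\[
\rho K^\eps = \big(\rho^2 \kappa^\eps(\rho)\rho_x\big)_x - \tfrac{1}{2}\rho\big(\rho{\kappa^\eps}'(\rho) + 5\kappa^\eps(\rho)\big)\rho_x^2,
\]
which moves the second quantity appearing on the left-hand side of the proposition to the correct side with the correct sign. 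After multiplication by $\theta$ and integration, the divergence structure lets integration by parts transfer derivatives onto $\theta_t$ and $\theta_x$, producing an identity of the form
\[
\iint \theta\Big(\rho p(\rho) + \tfrac{1}{2} \rho(\rho{\kappa^\eps}'(\rho) + 5\kappa^\eps(\rho))\rho_x^2\Big) \,dtdx = \NI + \NII + \NIII + \NIV,
\]
with $\NI = -\iint \theta_t \rho u h\,dtdx$, $\NII = -\iint \theta_x(\rho u^2 + p(\rho) - L^\eps - K^\eps)\, h\,dtdx$, $\NIII = \iint \theta \rho L^\eps\,dtdx$, and $\NIV = -\iint \theta_x \rho^2 \kappa^\eps(\rho) \rho_x\,dtdx$.

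Each of $\NI,\dots,\NIV$ then needs to be bounded by $C(K,T,\Escr_0,\Escrt_0)$. On $\mathrm{supp}\,\theta$, $h - \rho_\star x$ is uniformly bounded in $L^\infty_{t,x}$: the spatial bound comes from $h_x - \rho_\star = \rho - \rho_\star$ integrated over $\mathrm{supp}\,\chi$, using the local $L^1$ control on $\rho - \rho_\star$ provided by the normalized energy, while the time bound comes from $h_t = -\rho u$ together with $\rho u \in L^\infty_t L^1_{x,\mathrm{loc}}$. Once $h$ is bounded, the term $\NI$ and the $\rho u^2 h$ and $p(\rho) h$ pieces of $\NII$ are controlled directly by the physical energy. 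The capillarity contributions inside $\NII$, together with $\NIV$, are handled by Cauchy--Schwarz against the effective dissipation: the tame condition (TC) gives $\kappa^\eps \leq (\mu^\eps)^2/\rho^3$, and the pointwise bound $\eps\mu(\rho) \leq C\langle\rho\rangle^{-\theta}$ from Lemma~\ref{papillon} makes the weights $\rho^2\kappa^\eps$ and $\rho\kappa^\eps$ controllable by the $(\EDscr)$-bounded combinations $(\mu^\eps)^2\rho_x^2/\rho^3$ and $\mu^\eps\kappa^\eps\rho_{xx}^2/\rho$.

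The main obstacle is the critical viscous term $\NIII$, which a priori scales exactly like the quantity to be estimated. A Cauchy--Schwarz split produces
\[
|\NIII| \leq \Big(\iint \theta\,\mu^\eps u_x^2\,dtdx\Big)^{1/2} \Big(\iint \theta\,\rho^2 \mu^\eps\,dtdx\Big)^{1/2},
\]
where the first factor is bounded by $(\EDscr)$; the second factor is critical. Here the refined estimate~\eqref{fourmi1} of Lemma~\ref{papillon}, namely $\eps\rho\mu(\rho) = o(1)\langle p(\rho)\rangle$ as $\eps \to 0$, supplies the required smallness:
\[
\iint \theta\,\rho^2 \mu^\eps\,dtdx \leq o(1)\iint \theta\,\rho\langle p(\rho)\rangle\,dtdx \leq C + o(1) \iint \theta\,\rho p(\rho)\,dtdx.
\]
The resulting $o(1)$-multiple of $\iint \theta\,\rho p(\rho)$ is then absorbed into the left-hand side for all sufficiently small $\eps$; here one uses the fact that for each fixed $\eps$ the solutions produced by Theorems~\ref{guillemot2},~\ref{guillemot2bis} and~\ref{guillemot3} are regular enough for the left-hand side to be a priori finite, and that a finite range of ``large'' $\eps$ is harmless for the uniform constant. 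The viscous piece of $\NII$ is handled by an entirely parallel Cauchy--Schwarz-plus-absorption argument using the $L^\infty$ bound on $\theta_x h$. Combining everything yields the claimed uniform bound.
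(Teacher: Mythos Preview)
Your proof is correct and follows essentially the same strategy as the paper: the $h$--potential you introduce is precisely the ``$\int_y^x$'' device the paper uses (multiplying by $\rho(x)\phi(x)\phi(y)$ and integrating in $y$ amounts to replacing $h$ by $c_0 h - \int\phi(y)h(t,y)\,dy$), the capillarity algebra giving the $\rho{\kappa^\eps}'+5\kappa^\eps$ term is identical, and the absorption of the critical viscous piece via Lemma~\ref{papillon} is the same. The only point where your write-up is slightly terse is the $L^\infty_{t,x}$ bound on $h$ over $\mathrm{supp}\,\theta$: since $h_t=-\rho u$ gives only $L^\infty_t L^1_{x,\mathrm{loc}}$ control, you need to normalize $h$ (e.g.\ so that its $\chi$-average vanishes at $t=0$) and then bound the time evolution of that average---exactly what the paper's $\phi(y)$-integration does implicitly.
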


\begin{proof} \vskip.15cm  \noindent \textbf{1. The key commutator identity.}
Start from the conservation of momentum equation, and integrate it over an interval $[y,x]$ to obtain
\begin{align}
\nonumber
\NI := &
\left( \rho u^2 + p(\rho) - \mu^\eps(\rho) u_x - (\rho \kappa^\eps(\rho)\rho_{xx} + \frac{1}{2}(\rho {\kappa^\eps}'(\rho) - \kappa^\eps(\rho))(\rho_x)^2) \right)(y)
\\
\nonumber
& = \frac{d}{dt} \int_y^x (\rho u) + \left(
\rho u^2 + p(\rho) - \mu^\eps(\rho) u_x - (\rho \kappa^\eps(\rho)\rho_{xx} + 
\frac{1}{2}(\rho {\kappa^\eps}'(\rho) - \kappa^\eps(\rho))(\rho_x)^2) \right)(x)
\\
&=:\NII.
\nonumber 
\end{align}
Next, pick a smooth, nonegative, compactly supported function $\phi$ equal to 1 on $K$,
multiply the above by $\rho(x) \phi(x) \phi(y)$, and integrate over $y,x \in \RR$, and $t \in [0,T]$. We 
estimate separately the contributions $\NI, \NII$.   

\vskip.15cm  

\noindent \textbf{2. Contribution $\NI$.}
Using successively  Cauchy-Schwarz inequality, the condition (TC), and Lemma~\ref{papillon}, it can be bounded as follows:
\begin{equation*}
\begin{split}
& \left| \int_0^T \iint \phi(y) \phi(x)\rho(x) \, \NI \, dx \, dy \, dt \right| 
\\
& \lesssim \int_0^T \int \rho(x) \phi(x) \phi(y) \left( \rho u^2 + p(\rho) + \mu^\eps (\rho) u_x^2 + \kappa^\eps(\rho) \rho_x^2 + \mu^\eps(\rho) + \rho \kappa^\eps(\rho) |\rho_{xx}| \right)(y)\, dx\,dy\,dt  \\
& \lesssim C(T,\mathscr{E}_0) + \int_0^T \rho(x) \phi(x) \phi(y) \left( \langle \rho \rangle^{-\theta} + \frac{\mu^\eps(\rho) \kappa^\eps(\rho)}{\rho} \rho_{xx}^2 \right)\, dx\,dy\,dt \leq C(K,T,\Escr_0,\Escrt_0).
\end{split}
\end{equation*}

\vskip.15cm  

\noindent \textbf{3. Contribution $\NII$.} 
This term reads
\begin{equation*}
\begin{split} 
\NII= & \underbrace{ \frac{d}{dt} \left( \int_y^x (\rho u)\right) + (\rho u^2)(x)}_{A_1} + \underbrace{ p(\rho(x))}_{A_2} - 
\underbrace{\left( \mu^\eps(\rho) u_x \right) (x)}_{A_3} 
- \underbrace{[\rho \kappa^\eps(\rho)\rho_{xx} - \frac{1}{2}(\rho {\kappa^\eps}'(\rho) - \kappa^\eps(\rho))(\rho_x)^2](x)}_{A_4}.
\end{split}
\end{equation*}
Recall that we perform the following manipulation: multiply by $\rho(x) \phi(x) \phi(y)$, and integrate over $y,x \in \RR$, and $t \in [0,T]$.
Integrating by parts in $t$, using the equation of conservation of mass, and then integrating by parts in $x$, gives
\begin{equation*}
\begin{split}
& \int_0^T \iint \rho(x) \phi(x) \phi(y) \, A_1 \, dt\,dx \,dy
\\
& = \left. \iint \rho(x) \phi(x) \phi(y) \int_y^x (\rho u) \,dy\, dx\right|_0^T - \int_0^T \iint \left( \phi(x) \phi(y) \partial_t \rho(x) \int_x^y (\rho u)
-  \phi(x) \phi(y) (\rho^2 u^2)(x) \right) \, dt\,dx \,dy 
\\
& = \left. \iint \rho(x) \phi(x) \phi(y) \int_y^x (\rho u) \,dy\, dx\right|_0^T 
\\
& \quad - \int_0^T \iint \left( - \partial_x (\rho(x)u(x)) \phi(x)\phi(y) \int_y^x (\rho u)
- \phi(y)\phi(x) \rho(x)^2 u(x)^2 \right)\, dt\,dx \,dy 
\\
& = \left. \iint \rho(x) \phi(x) \phi(y) \int_y^x (\rho u) \,dy\, dx\right|_0^T + \int_0^T \iint (\rho u)(x) \phi'(x) \phi(y) \int_y^x (\rho u)\, dt\,dx \,dy.
\end{split}
\end{equation*}
By H\"older's inequality, we find thus
\be
\nonumber
\left| \int_0^T \iint \rho(x) \phi(x) \phi(y) A_1 \,dy\, dtdx \right| \lesssim C(K,T,\Escr_0),
\ee
while the term $A_2$ gives immediately
\be
\nonumber
\int_0^T \iint \rho(x) \phi(x) \phi(y) \, A_2 \,dy\, dtdx = c_0 \int_0^T \int \phi \rho p(\rho) \, dtdx,
\ee
where $c_0 = \int \phi$.
Applying successively Cauchy-Schwarz inequality, the condition (TC), and~(\ref{fourmi1}), one obtains
\begin{equation*}
\begin{split}
\left| \int_0^T \iint \rho(x) \phi(y) \phi(x) \, A_3 \,dy\, dtdx \right| 
& \lesssim \frac{1}{\zeta} \int_0^T \int \phi \mu^\eps(\rho) u_x^2 \, dtdx + \zeta \int_0^T \int \phi \mu^\eps(\rho) \rho^2 \, dtdx \\
& \lesssim C(\Escr_0) + \zeta C(\Escr_0,\Escrt_0) \int_0^T \int \phi \rho p(\rho) \, dtdx ,
\end{split}
\end{equation*}
where $\zeta$ is a small constant which will be determined shortly.
Finally, a few integrations by parts yield
\begin{equation*}
\begin{split}
\iint \rho(x) \phi(x) \phi(y) \, A_4 \, dx\,dy\,ds & = \frac{1}{2} c_0 \int_0^T \int \rho(x) \rho_x^2(x) \left( \rho(x) {\kappa^\eps}'(\rho(x)) + 5 \kappa^\eps(\rho(x)) \phi(x) \right) \, dx \\
& \qquad + c_0 \int_0^T \int \phi'(x) \rho(x)^2 \kappa^\eps (x) \rho_x(x)\, dt\, dx.
\end{split}
\end{equation*}
We bound using Cauchy-Schwarz inequality, (TC) and Lemma~\ref{papillon}
\begin{equation*}
\begin{split}
\left| \int_0^T \int \phi'(x) \rho(x)^2 \kappa^\eps (\rho(x)) \rho_x(x)\, dtdx \right| & \lesssim \int_0^T \int |\phi'(x)| \kappa^\eps (x) \rho_x(x)^2 \, dt\, dx
+ \int_0^T \int |\phi'(x)| \kappa^\eps (\rho(x)) \rho(x)^4 \, dtdx \\
& \lesssim C(\Escr_0) + \int_0^T \int \phi(x) \mu^{\eps 2}(\rho(x)) \rho(x) \, dtdx \lesssim C(\Escr_0).
\end{split}
\end{equation*}
Gathering the previous results gives
\begin{equation*}
\begin{split}
& \int_0^T \iint \phi(y) \phi(x) \rho(x) \, \NII 
\, dydtdx 
- c_0 \int_0^T \int \left( \rho p(\rho) + \frac{1}{2} \rho (\rho {\kappa^\eps}'(\rho) + 5 \kappa^\eps(\rho)) \rho_x^2 \right)(x)\, dtdx \\
& \lesssim \zeta C(\Escr_0,\Escrt_0) \int_0^T \int \rho p(\rho)\, dtdx + C(\Escr_0,\Escrt_0).
\end{split}
\end{equation*}
Taking $\zeta$ to be small enough and combining~this inequality with the contributions $\NI$ and $\NII$ 
gives the desired result.
\end{proof}


\subsection{Higher integrability property for the velocity} 

\label{sec:43}

General entropy pairs are obtained by integrating the fundamental entropy kernel 
\bel{eq:chidef}
\chi(\rho,u;v) = \left(\rho^{2\theta} - (v-u)^2 \right|_+^\lambda
\ee
with $\theta = \frac{\gamma-1}{2}$ and $\lambda = \frac{3-\gamma}{2(\gamma-1)}$ 
and the fundamental entropy flux kernel 
\bel{eq:etadef} 
\sigma(\rho,u;v)= \left( \theta v + (1-\theta) u \right) \left(\rho^{2\theta} - (v-u)^2 \right|_+^\lambda
\ee
against an arbitrary function $\psi=\psi(v)$, namely: 
\bel{eq:4200}
\eta^\psi(\rho,u) := \int_\RR \chi(\rho,u;v) \, \psi(v) \, dv, 
\quad 
\qquad q^\psi(\rho,u) := \int_\RR \sigma(\rho,u;v) \psi(v) \,dv.
\ee
With the choice $\psi = v |v|$, we will simply denote
$$
\etat := \eta^\psi,
\qquad \widetilde{q} := q^\psi.
$$

It is convenient in the following to consider that all functions depend either on $u,\rho$, or on $m,\rho$, depending on the context.
When derivatives are taken, we adopt the following convention: derivatives in $m,u$ are always taken by keeping $\rho$ constant, while derivatives in $\rho$ are always taken by 
keeping $u$ constant.

The following pointwise bounds hold for $\etat$ and $\widetilde{q}$: 
\bel{lemmabound}
\aligned
& \widetilde{q} \gtrsim \rho |u|^3 + \rho^{\gamma + \theta}, 
\qquad \qquad 
&& \left| \etat \right| \lesssim \rho u^2 + \rho^\gamma,
 \\
& \left| \etat_m \right| \lesssim |u| + \rho^\theta,
&& \left| \etat_{m \rho} \right| \lesssim \rho^{\theta-1}, 
&&& \left| \etat_{mu} \right| \lesssim 1, 
\qquad
&&
\endaligned
\ee 
These bounds appeared first in Lions, Perthame and Tadmor~\cite{LPT}, and were already used in \cite{LW} and next in~\cite{CP}. 
In \cite{GL2}, these estimates will be checked to hold for a broad class of pressure functions, so that the estimate in the following proposition will also be established for general 
pressure laws. 

\begin{proposition}
\label{grebe}
[Higher integrability of the velocity]
Under the assumptions stated at the beginning of the present section, and assuming furthermore that
$$
\mu(\rho) \lesssim \rho^{2/3} \qquad \mbox{for $\rho$ small}
$$
for any $T>0$ and every compact set $K$, there exists a constant $C(T, K,\Escr_0,\Escrt_0)$ such that
$$
\int_0^T \int_K \big( \rho |u|^3 + \rho^{\gamma+\theta} \big) \, dtdx \leq C(T, K,\Escr_0,\Escrt_0).
$$
\end{proposition}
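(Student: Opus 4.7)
Following the Lions--Perthame--Tadmor strategy already exploited in \cite{LW,CP}, I work with the weak entropy pair $\widetilde\eta=\eta^{v|v|}$, $\widetilde q=q^{v|v|}$, whose flux enjoys the crucial lower bound $\widetilde q\gtrsim\rho|u|^3+\rho^{\gamma+\theta}$ recorded in~\eqref{lemmabound}. For any solution of (NSK)$_\eps$ the corresponding balance law reads
$$\del_t\widetilde\eta+\del_x\widetilde q \;=\; \widetilde\eta_m\bigl(L^\eps[\rho,u]+K^\eps[\rho]\bigr)_x,$$
so I multiply this identity by $\sign(x-y)$ and integrate over $(t,x,y)\in[0,T]\times\RR\times K$. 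Since $\del_x\sign(x-y)=2\delta(x-y)$, an integration by parts in $x$ converts the flux contribution into $-2\int_0^T\!\!\int_K\widetilde q(t,y)\,dy\,dt$ plus far-field boundary pieces; the pointwise positivity of $\widetilde q$ then produces, with the correct sign, exactly the spacetime integral we wish to control. Boundary contributions at $\pm\infty$ when $\rho_\star>0$ are handled by subtracting from $\widetilde\eta$ its affine approximation at $(\rho_\star,0)$, exactly as is done for $E_\star$.

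\textbf{The two easier contributions.} After integration in $t$, the time-derivative term is controlled by $\int_\RR(|\widetilde\eta|(0,x)+|\widetilde\eta|(T,x))\,dx$, which by the pointwise bound $|\widetilde\eta|\lesssim \rho u^2+\rho^\gamma$ from~\eqref{lemmabound}, conservation of mass, and the uniform energy bound, is dominated by $C(\Escr_0,\Escrt_0,\Mscr_0)$. For the source term, I integrate by parts in $x$ once more to extract from $\widetilde\eta_m(L^\eps+K^\eps)_x$ a boundary piece $-2\widetilde\eta_m(L^\eps+K^\eps)(t,y)$ at $x=y$ and a bulk piece $-(\widetilde\eta_{mu}u_x+\widetilde\eta_{m\rho}\rho_x)(L^\eps+K^\eps)$ under the $\sign(x-y)$ integral. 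For the viscous piece $L^\eps=\mu^\eps u_x$, Cauchy--Schwarz combined with the pointwise bounds $|\widetilde\eta_{mu}|\lesssim 1$ and $|\widetilde\eta_{m\rho}|\lesssim\rho^{\theta-1}$ reduces everything to the quadratic forms $\iint\mu^\eps u_x^2$ and $\iint\mu^\eps\rho^{2\theta-2}\rho_x^2$, both of which are built into $(\EDscr)$: indeed, for a polytropic fluid one has $\gamma-3=2\theta-2$, so the second coincides with $\iint\mu^\eps p'(\rho)\rho^{-2}\rho_x^2$.

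\textbf{Main obstacle.} The delicate terms are those involving the capillarity source $\widetilde\eta_m K^\eps_x$, whose expansion contains $\rho\kappa^\eps\rho_{xx}$ and $(\rho(\kappa^\eps)'-\kappa^\eps)\rho_x^2$, together with the pointwise trace at $x=y$. For the capillary bulk terms, I would use (TC) to dominate $\kappa^\eps$ by $(\mu^\eps)^2/\rho^3$ and then invoke simultaneously the effective-energy control $\mu^\eps(\rho)^2\rho_x^2/\rho^3\in L^\infty_tL^1_x$ and, crucially, the effective-dissipation control of $\mu^\eps\kappa^\eps\rho_{xx}^2/\rho$ and $\mu^\eps\kappa^\eps\rho_x^4/\rho^3$ in $L^1_{t,x}$, which is precisely where the strong coercivity (SC) enters. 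The boundary trace at $x=y$ is the most subtle point: after integration in $y\in K$ and $t\in[0,T]$, the growth assumption $\mu(\rho)\lesssim\rho^{2/3}$ and Lemma~\ref{papillon} (which provides $\eps\mu(\rho)\langle\rho\rangle^\theta\lesssim 1$ and $\eps\mu(\rho)\rho/\langle p(\rho)\rangle = o(1)$) are what allow these pointwise traces to be absorbed either into a small constant times $\iint_{[0,T]\times K}(\rho|u|^3+\rho^{\gamma+\theta})$, which can then be moved to the left-hand side, or into $\iint_{[0,T]\times K}\rho p(\rho)$, already bounded in the previous subsection. Closing the resulting inequality yields the stated estimate.
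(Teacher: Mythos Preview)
Your proposal is correct and follows essentially the same approach as the paper: the same entropy pair $(\widetilde\eta,\widetilde q)$ with $\psi(v)=v|v|$, the same multiplication by $\sign(x-y)$ and integration over $[0,T]\times\RR\times K$, the same integration-by-parts decomposition of the viscous and capillary source terms into bulk pieces (controlled via Cauchy--Schwarz, (TC), and $(\EDscr)$) and trace pieces at $x=y$ (handled via Lemma~\ref{papillon} and the hypothesis $\mu(\rho)\lesssim\rho^{2/3}$, then partially absorbed into the left-hand side). Your remark on renormalizing $\widetilde\eta$ when $\rho_\star>0$ is a point the paper glosses over, and your factor $-2$ from $\partial_x\sign(x-y)=2\delta_{x=y}$ is the correct one (the paper writes $-1$, which is harmless under $\gtrsim$).
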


\begin{proof}
The conservation law for the entropy $\etat$ reads
\begin{equation*}
\underbrace{\partial_t \etat(\rho,u)}_{\NI} + \underbrace{\partial_x \widetilde{q}(\rho,u)}_{\NII} 
= 
\underbrace{\etat_m(\rho,u) (\mu^\eps(\rho) u_x)_x}_{\NIII} 
+ \underbrace{\etat_m(\rho,u) \left(
\rho \kappa^\eps(\rho)\rho_{xx} + \frac{1}{2}(\rho {\kappa^\eps}'(\rho) - \kappa^\eps(\rho))(\rho_x)^2\right|_x}_{\NIV}. 
\end{equation*}
Multiply this equation by $\sign(x-y)$ and integrate it on $(t,x,y) \in [0,T] \times \RR \times K$.
We examine separately the contributions of $\NI$, $\NII$, $\NIII$, and $\NIV$.

\vskip.15cm 
\noindent
\textbf{Contribution $\NI$.}
By~(\ref{lemmabound}), this term can be bounded by
\begin{equation*}
\label{eq:601}
\begin{split}
\left| \int_0^T \int_{\RR} \int_K \sign(x-y) \NI \, dtdxdy  \right|
\lesssim \int_K \int_\RR |\eta(T,x)| \, dx\,dy + \int_K \int_\RR |\eta(0,x)| \, dx \,dy \\
\lesssim \int_\RR \left(\rho u^2 + \rho^\gamma \right)(T,x) \, dx + \int_\RR \left(\rho u^2 + \rho^\gamma \right)(0,x) \, dx
\leq 2 \Escr_0.
\end{split}
\end{equation*}

\vskip.15cm 
\noindent
\textbf{Contribution $\NII$.}
By~\eqref{lemmabound}, this term contributes 
$$
\int_0^T \int_\RR \int_K \sign(x-y) \, \NII \, dx\, dy\, ds = - \int _0^T \int_K \widetilde{q}(\rho(s,y),u(s,y))\,dy\, ds 
\lesssim - \int_0^T \int_K \left( \rho |u|^3 + \rho^{\gamma+\theta} \right) \,dy\,ds.
$$

\vskip.15cm 
\noindent
\textbf{Contribution $\NIII$.} By integration by parts, this term can be written as 
\be\nonumber
\begin{split}
\int_0^T \int_\RR \int_K \sign(x-y) \, \NIII \, dx\,dy\, ds 
& = - \int_0^T \int_\RR \int_K \sign(x-y) \left( \etat_{m \rho} \rho_x + \etat_{m u} u_x \right) (\mu^\eps(\rho) u_x) \, dx\,dy\,ds
\\
&\quad + \int_0^T \int_K \etat_m(\rho,u) \mu^\eps(\rho) u_x \,dy\, ds , 
\end{split}
\ee
which we bound using~\eqref{lemmabound}, and Cauchy-Schwarz inequality:
\be
\label{circaete}
\begin{split}
&\left| \int_0^T \int_\RR \int_K \sign(x-y) \, \NIII \, dx\,dy\,ds \right|
\\
& \lesssim \int_0^T \int \left( \mu^\eps(\rho) \rho^{\theta-1} |\rho_x| |u_x| + \mu^\eps(\rho) u_x^2 + \mathbbm{1}_K (|u| + \rho^\theta) \mu^\eps(\rho) |u_x|  \right) \, dsdx \\
& = \int_0^T \int \Bigg( 
\underbrace{\frac{1}{\zeta} \mu^\eps(\rho) |u_x|^2}_{\NIII_1} 
+ \underbrace{\mu^\eps(\rho) \rho^{2\theta-2} |\rho_x|^2}_{\NIII_2} 
+ \underbrace{\zeta \mathbbm{1}_K \,\mu^\eps(\rho) |u|^2}_{\NIII_3} 
+ \underbrace{\mathbbm{1}_K \mu^\eps(\rho) \rho^{2\theta} }_{\NIII_4} \Bigg)\, dsdx.
\end{split}
\ee
Here, $\zeta$ is a small constant whose value will be fixed in the following. We now bound one by one the terms $\NIII_1$ to $\NIII_4$. The first one is easy:
$$
\int_0^T \int \NIII_1 \, dsdx =  \frac{1}{\zeta} \int_0^T  \int \mu^\eps(\rho) |u_x|^2\, dsdx  \lesssim\Escr_0.
$$
Dealing with the second term is not more difficult, since  
$$
\int_0^T \int \NIII_2 \, dtdx = \int_0^T \int \mu^\eps(\rho) \rho^{2\theta-2} |\rho_x|^2 dsdx = \int_0^T \int \frac{\mu^\eps(\rho) p'(\rho)}{\rho^2} |\rho_x|^2 dsdx\lesssim \Escrt_0.
$$
Next, for the third term, we resort to~(\ref{fourmi0}) and the assumption $\mu(\rho) \lesssim \rho^{2/3}$ for $\rho$ small:
\be
\begin{split}
\int_0^T \int \NIII_3 \, dtdx & = \zeta \int_0^T \int \mathbbm{1}_K \mu^\eps(\rho) |u|^2 \, dtdx \\
& = \zeta \int_0^T \int_{\{\rho \leq 1\}} \mathbbm{1}_K \mu^\eps(\rho) |u|^2 \, dtdx + \zeta \int_0^T \int_{\{\rho \geq 1\}} \mathbbm{1}_K \mu^\eps(\rho) |u|^2 \, dtdx \\
& \leq \zeta \int_0^T \int_{\{\rho \leq 1\}} \mathbbm{1}_K \mu^\eps(\rho) \rho^{-2/3} \rho^{2/3} |u|^2 \, dtdx 
+ \zeta C(\Escr_0,\Escrt_0) \int_0^T \int_{\{\rho \geq 1\}} \mathbbm{1}_K \langle \rho \rangle^{-\theta} |u|^2 \, dtdx \\
& \leq \zeta \eps \int_0^T \int_K \rho |u|^3\,dtdx + \zeta C(K,T,\Escr_0,\Escrt_0).
\end{split}
\ee
Finally, using once again~(\ref{fourmi0}) gives
\be
\begin{split}
\int_0^T \int \NIII_4 \, dtdx  =  \int_0^T \int_K \mu^\eps(\rho) \rho^{2\theta} \, dtdx 
\leq C(\Escr_0,\Escrt_0) \int_0^T \int_K \langle \rho \rangle^{\theta} \, dtdx \leq C(T,K,\Escr_0,\Escrt_0).
\end{split}
\ee

\vskip.15cm 
\noindent
\textbf{Contribution $\NIV$.} By integrating by parts, this term can be written in the form 
\be
\nonumber
\begin{split}
&\int_0^T \int_\RR \int_K \sign(x-y) \, \NIV \, ds dx dy
\\
&= - \int_0^T \int_\RR \int_K \sign(x-y)
\left( \etat_{m \rho} \rho_x + \etat_{m u} u_x \right) 
\left(\rho \kappa^\eps(\rho)\rho_{xx} + \frac{1}{2}(\rho {\kappa^\eps}'(\rho) - \kappa^\eps(\rho))(\rho_x)^2\right) \, dsdxdy
\\
&\quad + \int_0^T \int_K \etat_m(\rho(s,y),u(s,y))  
\left(\rho \kappa^\eps(\rho)\rho_{xx} + \frac{1}{2}(\rho {\kappa^\eps}'(\rho) - \kappa^\eps(\rho))(\rho_x)^2\right)  \, dsdy, 
\end{split}
\ee
which we bound using successively \eqref{lemmabound} and Cauchy--Schwarz inequality:
\begin{equation*}
\begin{split}
&\left| \int_0^T \iint \sign(x-y) \, \NIV \, dx\,dy\,ds \right|
\\
&\lesssim \int_0^T \int \left(\mathbbm{1}_K(|u| + \rho^\theta) + \rho^{\theta-1} |\rho_x| + |u_x| \right) 
\left( \rho \kappa^\eps |\rho_{xx}| + \kappa^\eps \rho_x^2 \right) \, dsdx 
\\
&\lesssim \int_0^T \int \left( \zeta \mathbbm{1}_K \mu^\eps(\rho) u^2 + \mu^\eps(\rho) u_x^2 + \mathbbm{1}_K \mu^\eps(\rho) \rho^{2\theta} + \mu^\eps(\rho) \rho^{2\theta-2} \rho_x^2 
+ \frac{1}{\zeta} \frac{ \kappa^{\eps 2}(\rho)\rho^2}{\mu^\eps(\rho)} (\rho_{xx})^2 + \frac{1}{\zeta} \frac{ \kappa^{ \eps 2}(\rho)}{\mu^\eps(\rho)} (\rho_x)^4 \right) \, dsdx.
\end{split} 
\end{equation*}
The first four terms on the right-hand side have already been bounded when treating $\NIII$, and they contribute
$$
\int_0^T \int \left(\mathbbm{1}_K \mu^\eps(\rho) u^2 + \mu^\eps(\rho) u_x^2 + \mathbbm{1}_K \mu^\eps(\rho) \rho^{2\theta} + \mu^\eps(\rho) \rho^{2\theta-2} \rho_x^2  \right) \, dsdx \lesssim
C(T,K,\Escr_0,\Escrt_0) + \zeta \eps \int_0^T \int_K \rho |u|^3\, dsdx.
$$
As for the last two terms, they can be dealt with using (TC):
\be \begin{split}
\frac{1}{\zeta} \int_0^T \int \left( \frac{\kappa^{\eps 2}(\rho)\rho^2}{\mu^\eps(\rho)} (\rho_{xx})^2 + \frac{\kappa^{\eps 2}(\rho)}{\mu^\eps(\rho)} (\rho_x)^4 \right) \, dtdx
& \lesssim \frac{1}{\zeta} \int_0^T \int \left( \frac{\mu^\eps(\rho) \kappa^\eps(\rho)}{\rho} (\rho_{xx})^2 + \frac{\mu^\eps(\rho) \kappa^\eps(\rho)}{\rho^3} (\rho_x)^4 \right) \, dtdx 
\\
& \lesssim \frac{1}{\zeta} (\Escr_0 + \Escrt_0). 
\end{split} \ee
The desired conclusion follows by gathering the contributions $\NI$ to $\NIV$ and taking $\zeta$ small enough. 
\end{proof}


\subsection{Existence with finite energy solutions to the Euler equations} 
\label{sec:5}

Consider solutions of (NSK)$_\epsilon$ for which~(\ref{cigogne2}) holds. We gather below all the uniform estimates (uniform in $\epsilon$) which have been proved so far (with the subscript $\eps$ removed for ease of reading): 
\be
\label{eq:650} 
\aligned
& \left\| \frac{1}{2} \rho u^2 + \rho e(\rho) + \frac{1}{2} \kappa^\eps(\rho) \rho_x^2 \right\|_{L^\infty_t L^1_x} \leq C(\Escr_0,\Escrt_0),
\\
& \left\| \mu^\eps(\rho) u_x^2 \right\|_{L^1_{t,x}} \leq C(\Escr_0,\Escrt_0),
&&
 \left\| \frac{ \mu^\eps(\rho)^2}{\rho^3} (\rho_x)^2 \right\|_{L^\infty_t L^1_x}\leq C(\Escr_0,\Escrt_0),
\\
& \left\| \frac{\mu^\eps(\rho) p'(\rho)}{\rho^2} (\rho_x)^2 \right\|_{L^1_{t,x}} \leq C(\Escr_0,\Escrt_0),
&&
\left\| \frac{ \mu^\eps(\rho) \kappa^\eps(\rho)}{\rho}(\rho_{xx})^2
+ \frac{ \mu^\eps(\rho) \kappa^\eps(\rho)}{\rho^3}(\rho_x)^4 \right\|_{L^1_{t,x}} \leq C(\Escr_0,\Escrt_0),
\\
& \left\| \mu^\eps(\rho) \langle \rho \rangle^{\theta} \right\|_{L^\infty_{t,x}} \leq  C(\Escr_0,\Escrt_0),
&&
 \left\| \frac{ \mu^\eps(\rho) \rho}{p(\rho)} \right\|_{L^\infty_{t,x}} \leq C(\Escr_0,\Escrt_0)o(1),
\\
& \left\| \rho p(\rho) + \rho \left( \rho {\kappa^\eps}'(\rho) + 5 \kappa^\eps(\rho) \right) (\rho_x)^2 \right\|_{L^1_{t,x \operatorname{loc}}} \leq C(\Escr_0,\Escrt_0),
\\
&
 \left\| \rho |u|^3 + \rho^{\gamma + \theta} \right\|_{L^1_{t,x \operatorname{loc}}}\leq C(\Escr_0,\Escrt_0).
\endaligned
\ee
We now turn to the proof of Theorem~\ref{theo:12}, which relies 
on the energy and higher-order integrability estimates stated in \eqref{eq:650}
in combination with the compactness framework established in \cite{LW} for polytropic fluids. 

The entropy--entropy flux pairs $(\eta^\psi,q^\psi)$ were defined in Section~\ref{sec:43}, and we also keep our convention on differentiation with respect to $u$, $\rho$, or $m$, as defined ealier.
First of all, we observe that, for any smooth and compactly supported function $\psi$, the entropy pair  $(\eta^\psi,q^\psi)$ satisfies the estimates 
\be
\label{boundpsi}
\aligned
& |\eta^\psi(\rho,u)| \lesssim \rho \langle \rho \rangle^{-\theta}, 
\qquad 
&& \left| \eta^\psi_m (\rho,u) \right| \lesssim \langle \rho \rangle^{-\theta},
\\
& \left| \eta^\psi_{mu}(\rho,u) \right| \lesssim \langle \rho \rangle^{-\theta}, 
&&\left| \eta^\psi_{m \rho}(\rho,u) \right| \lesssim \rho^{\theta-1} \langle \rho \rangle^{-\theta},
\qquad
&&& \left| q^\psi \right| \lesssim \rho,
\endaligned
\ee
as is easily checked from the expressions \eqref{eq:chidef}--\eqref{eq:etadef} of $\chi, \sigma$.
In order to establish the convergence property stated in Theorem~\ref{theo:12}, it now suffices to establish the following result. 

\begin{proposition}[Compactness of the entropy dissipation] 
\label{prop:compact}
Assume that the tame condition (TC) and the bounds~(\ref{eq:650}) hold.
Then, for all compactly supported test-functions $\psi$, the sequence $\partial_t \eta^\psi (\rho^\eps,u^\eps) + \partial_x q^\psi (\rho^\eps,u^\eps)$ is compact in $H^{-1}_{\operatorname{loc}}(\mathbb{R_+}\times\RR)$. 
\end{proposition}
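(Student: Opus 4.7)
\textbf{Strategy and $W^{-1,r}$-boundedness.} My plan is to invoke the Murat--Tartar div-curl lemma in the form: a sequence $\{f_\eps\} \subset \mathcal{D}'$ lies in a compact subset of $H^{-1}_\loc$ provided (i) $\{f_\eps\}$ is bounded in $W^{-1,r}_\loc$ for some $r>2$, and (ii) $f_\eps = g_\eps + h_\eps$ with $\{g_\eps\}$ in a compact subset of $H^{-1}_\loc$ and $\{h_\eps\}$ bounded in the Radon measures (in particular, in $L^1_\loc$). For (i), the pointwise bounds \eqref{boundpsi} applied to the compactly supported $\psi$ yield $|\eta^\psi|, |q^\psi| \lesssim \rho$; combined with the higher integrability $\rho p(\rho) \in L^1_{t,x,\loc}$ established in the previous subsection, this gives $\eta^\psi, q^\psi \in L^{\gamma+1}_{t,x,\loc}$, whence $\partial_t\eta^\psi + \partial_x q^\psi$ is bounded in $W^{-1,\gamma+1}_{t,x,\loc}$ with $\gamma+1 > 2$.

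\textbf{Decomposition and $L^1$-bound on the remainder.} For smooth solutions of the NSK$_\eps$ system the entropy balance reads $\partial_t\eta^\psi + \partial_x q^\psi = \eta^\psi_m \partial_x(L^\eps + K^\eps)$, and a single integration by parts yields the natural splitting
\begin{equation*}
g_\eps := \partial_x\bigl[\eta^\psi_m(L^\eps + K^\eps)\bigr],
\qquad
h_\eps := -\bigl(\eta^\psi_{m\rho}\rho_x + \eta^\psi_{mu} u_x\bigr)(L^\eps + K^\eps).
\end{equation*}
The various products forming $h_\eps$ are placed in $L^1_{t,x,\loc}$ by systematic Cauchy--Schwarz and AM--GM splittings into the dissipation quantities of $(\EDscr)$: the viscous remainders are absorbed into $\mu^\eps u_x^2$ and $\mu^\eps p'(\rho)\rho_x^2/\rho^2$ through $|\eta^\psi_{mu}|, \rho^{1-\theta}|\eta^\psi_{m\rho}| \lesssim \langle\rho\rangle^{-\theta}$; the capillary remainders (which feature $\rho\kappa^\eps\rho_{xx}$ and $\kappa^\eps\rho_x^2$) are reduced through the tame condition (TC), $\kappa^\eps \lesssim (\delta/\eps^2)(\mu^\eps)^2/\rho^3$, to the effective-dissipation terms $\mu^\eps\kappa^\eps\rho_{xx}^2/\rho$ and $\mu^\eps\kappa^\eps\rho_x^4/\rho^3$ already controlled in $L^1$.

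\textbf{Compactness of $g_\eps$.} For $g_\eps$ it is enough to show $\eta^\psi_m(L^\eps+K^\eps) \to 0$ in $L^2_{t,x,\loc}$. For the viscous piece,
\begin{equation*}
\|\eta^\psi_m \mu^\eps u_x\|_{L^2}^2 \leq \bigl\||\eta^\psi_m|^2 \mu^\eps\bigr\|_{L^\infty}\,\|\mu^\eps u_x^2\|_{L^1},
\end{equation*}
and the prefactor tends to $0$ as $\eps \to 0$: on $\{\rho\leq 1\}$ one uses $\mu^\eps = \eps\mu(\rho^\eps) \lesssim \eps$, while on $\{\rho \geq 1\}$ Lemma~\ref{papillon} gives $\mu^\eps \lesssim o_\eps(1)\langle\rho\rangle^{\gamma-1}$, which together with $|\eta^\psi_m|^2 \lesssim \langle\rho\rangle^{-2\theta} = \langle\rho\rangle^{-(\gamma-1)}$ forces $\||\eta^\psi_m|^2\mu^\eps\|_\infty = o_\eps(1)$. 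The capillary piece is handled in parallel: (TC) delivers the pointwise bound $\rho^3\kappa^\eps/\mu^\eps \leq (\delta/\eps^2)\mu^\eps$, which in the \emph{subcritical} regime $\delta(\eps)/\eps^2 \to 0$ plays the role of $\mu^\eps$ in the viscous estimate and drives $\eta^\psi_m K^\eps \to 0$ in $L^2_{t,x,\loc}$.

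\textbf{Main obstacle.} The chief difficulty is the capillary contribution to $g_\eps$ in the \emph{critical} scaling $\delta(\eps) = c\eps^2$: then $(\delta/\eps^2)\mu^\eps$ does not vanish, and $\eta^\psi_m K^\eps$ is only bounded (not small) in $L^2_{t,x,\loc}$. To recover $L^2_{t,x,\loc}$-precompactness --- and hence $H^{-1}_\loc$-compactness of $\partial_x(\eta^\psi_m K^\eps)$ --- I plan to exploit the additional Sobolev regularity $\rho^\eps \in L^2_t H^2_{x,\loc}$ supplied by the (SC)-dissipation $\mu^\eps\kappa^\eps\rho_{xx}^2/\rho \in L^1$ when $\kappa^\eps > 0$, together with time regularity of $\rho^\eps$ drawn from the mass equation, via an Aubin--Lions compactness argument. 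The combination of (TC) and (SC) is precisely what provides the Sobolev structure needed in this regime to close the compactness argument.
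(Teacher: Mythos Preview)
Your overall strategy and decomposition match the paper's proof exactly: Murat--Tartar interpolation between $W^{-1,\gamma+1}$-boundedness (from $|\eta^\psi|,|q^\psi|\lesssim\rho$ and $\rho\in L^{\gamma+1}_{\loc}$) and $W^{-1,q}$-compactness, $q<2$, obtained by writing the entropy dissipation as $\partial_x\bigl(\eta^\psi_m(L^\eps+K^\eps)\bigr)$ plus lower-order remainders. Your $L^1$ bounds on $h_\eps$ via Cauchy--Schwarz and (TC) are correct and mirror the paper's estimates (\ref{aigle2})--(\ref{aigle6}) and (\ref{estimateII})--(\ref{estimateIII}).

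The ``main obstacle'' you identify, however, is a phantom, and the proposed Aubin--Lions detour is unnecessary. You have already established, for the viscous piece, that $\mu^\eps(\rho^\eps)\to 0$ in $L^\infty_{t,x}$: on $\{\rho\le R\}$ one has $\mu^\eps=\eps\mu(\rho)\le\eps\sup_{[0,R]}\mu$, while on $\{\rho>R\}$ the bound $\mu^\eps\langle\rho\rangle^\theta\le C$ from Lemma~\ref{papillon} gives $\mu^\eps\le CR^{-\theta}$; hence $\|\mu^\eps\|_{L^\infty}=o(1)$. Now in the critical regime $\delta(\eps)\sim\eps^2$ the (TC) inequality still gives $\rho^3\kappa^\eps\lesssim(\mu^\eps)^2$, so
\[
|\eta^\psi_m\,\rho\kappa^\eps\rho_{xx}|
\lesssim \rho\kappa^\eps|\rho_{xx}|
\le \sqrt{\mu^\eps}\,\sqrt{\tfrac{\mu^\eps\kappa^\eps}{\rho}}\,|\rho_{xx}|
= o(1)\sqrt{\tfrac{\mu^\eps\kappa^\eps}{\rho}}\,|\rho_{xx}|,
\]
and similarly $|\eta^\psi_m\,\kappa^\eps\rho_x^2|\lesssim\sqrt{\mu^\eps}\,\sqrt{\tfrac{\mu^\eps\kappa^\eps}{\rho^3}}\,\rho_x^2=o(1)\sqrt{\tfrac{\mu^\eps\kappa^\eps}{\rho^3}}\,\rho_x^2$. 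Since the square-root factors are uniformly in $L^2_{t,x}$ by $(\EDscr)$, this yields $\eta^\psi_m K^\eps\to 0$ in $L^2_{t,x,\loc}$ in the critical case as well. In short, the smallness comes from $\sqrt{\mu^\eps}=o(1)$, not from $\delta/\eps^2\to 0$; you already had this in hand from the viscous step and simply did not reuse it. This is exactly the paper's argument (\ref{aigle1}) and (\ref{aigle4}).
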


\begin{proof} {\bf 1. Conservation law for $\eta^\psi$.} It reads
$$
\partial_t \eta^\psi (\rho,u) + \partial_x q^\psi (\rho,u) = \eta^\psi_m 
\left(
  (\mu^\eps(\rho) u_x)_x + \left(\rho \kappa^\eps(\rho)\rho_{xx} + \frac{1}{2} (\rho {\kappa^\eps}'(\rho) - \kappa^\eps(\rho))(\rho_x)^2\right|_x \right).
$$
We fix from now on a compact set $K$, a time $T>0$, and we will show the compactness of $\partial_t \eta^\psi (\rho^\eps,u^\eps) + \partial_x q^\psi (\rho^\eps,u^\eps)$ in $H^{-1}([0,T]\times K)$.

\vskip.15cm  

\noindent \textbf{2. Compactness of the viscous term in $W^{-1,q}_{\operatorname{loc}}$ for $q<2$} We deal first with the viscous term 
$\eta^\psi_m (\mu^\eps(\rho) u_x)_x$, which can be written
$$
\eta^\psi_m (\mu^\eps(\rho) u_x)_x = \underbrace{\left( \eta^\psi_m \mu^\eps(\rho) u_x \right|_x}_\NI
- \underbrace{\eta^\psi_{m \rho} \rho_x \mu^\eps(\rho) u_x}_\NII
- \underbrace{\eta^\psi_{m u} u_x \mu^\eps(\rho) u_x}_\NIII.
$$
Using successively the estimates~\eqref{boundpsi} and the bound on $\frac{\mu^\eps(\rho) \rho}{p(\rho)} \in L^\infty_{t,x}$, we obtain 
$$
\left| \mu^\eps(\rho) u_x \eta^\psi_m \right| \lesssim \mu^\eps(\rho) \left| u_x \right| \lesssim 
\sqrt{\mu^\eps(\rho)} |u_x| \sqrt{\mu^\eps(\rho)} = o(1) \sqrt{\mu^\eps(\rho)} |u_x|
$$
where $o(1)$ is understood in the $L^\infty_{t,x}$ topology and $\eps \to 0$; namely, $\mu^\eps(\rho) = 0(1)$ follows from 
$\mu^\eps(\rho) \lesssim \inf(\epsilon \mu(\rho),\langle \rho \rangle^{-\theta})$. Therefore, we find 
\be
\label{estimateI}
\NI\to 0 \qquad \mbox{in $H^{-1}([0,T] \times K))$ as $n \to +\infty$}.
\ee
Next, in view of the estimates~\ref{boundpsi} and by Cauchy-Schwarz inequality,  we have 
$$
|\NII| \lesssim \rho^{\theta-1} \mu^\eps(\rho) |u_x| |\rho_x| \lesssim \mu^\eps(\rho) |u_x|^2 
+  \rho^{2\theta-2} \mu^\eps(\rho) |\rho_x|^2 \lesssim  \mu^\eps(\rho) |u_x|^2 + \frac{\mu^\eps(\rho) p'(\rho)}{\rho^2} \rho_x^2.
$$
This implies a uniform bound for $\NII$ in the $L^1$ norm:
\be
\label{estimateII}
\left\| \NII \right\|_{L^1([0,T] \times K)} \lesssim 1.
\ee
Finally, by Lemma~\ref{boundpsi}, we have 
$
|\NIII| \lesssim \mu^\eps(\rho) u_x^2, 
$
thus
\be
\label{estimateIII}
\left\| \NIII \right\|_{L^1([0,T] \times K)} \lesssim 1.
\ee
Gathering~(\ref{estimateI}), (\ref{estimateII}), and (\ref{estimateIII}) gives that $\eta^\psi_m (\mu(\rho) u_x)_x$ is compact in $W^{-1,q}$ for all $q <2$. 

\vskip.15cm  

\noindent \textbf{3. Compactness of the capillary term in $W^{-1,q}$ for $q<2$} We now deal with the capillary term 
$ \eta^\psi_m (\rho \kappa^\eps(\rho)\rho_{xx} + \frac{1}{2}(\rho {\kappa^\eps}'(\rho) - \kappa^\eps(\rho))(\rho_x)^2)_x$ which can also be written as 
\be
\nonumber
\begin{split}
& \eta^\psi_m (\rho \kappa^\eps(\rho)\rho_{xx} + \frac{1}{2} (\rho {\kappa^\eps}'(\rho) - \kappa^\eps(\rho))(\rho_x)^2)_x =
\underbrace{\left( \eta^\psi_m \rho \kappa^\eps(\rho)\rho_{xx} \right|_x}_\NI 
- \underbrace{ \rho \kappa^\eps(\rho)\rho_{xx} \eta^\psi_{m \rho} \rho_x}_\NII
- \underbrace{\rho \kappa^\eps(\rho)\rho_{xx} \eta^\psi_{m u} u_x}_\NIII
 \\
& + \underbrace{\frac{1}{2} \left( \eta^\psi_m (\rho {\kappa^\eps}'(\rho) - \kappa^\eps(\rho))(\rho_x)^2 \right|_x}_\NIV
- \underbrace{\frac{1}{2} (\rho {\kappa^\eps}'(\rho) - \kappa^\eps(\rho)) (\rho_x)^2 \eta^\psi_{m \rho} \rho_x}_\NV
- \underbrace{\frac{1}{2} (\rho {\kappa^\eps}'(\rho) - \kappa^\eps(\rho)) (\rho_x)^2 \eta^\psi_{m u} u_x}_\NVI
\end{split}
\ee
Using the estimates~\eqref{boundpsi} and the condition (TC) gives
\be
\label{aigle1}
\left| \eta^\psi_m \rho \kappa^\eps(\rho)\rho_{xx} \right| \lesssim \rho \kappa^\eps \rho_{xx} 
\lesssim \sqrt{\mu^\eps(\rho)} \sqrt{ \frac{ \mu^\eps \kappa^\eps}{\rho} } \rho_{xx} = o(1) \sqrt{ \frac{\mu^\eps \kappa^\eps}{\rho} } \rho_{xx}.
\ee
Similarly, the estimates~\eqref{boundpsi}, the condition (TC), and Cauchy-Schwarz inequality yield 
\be
\label{aigle2}
\left| \NII \right| = \left| \rho \kappa^\eps(\rho)\rho_{xx} \eta^\psi_{m \rho} \rho_x \right| \lesssim  \rho \kappa^\eps(\rho) |\rho_{xx}| \rho^{\theta-1} |\rho_x| \lesssim 
\frac{\mu^\eps \kappa^\eps}{\rho} (\rho_{xx})^2 + \frac{\mu^\eps p'}{\rho^2} (\rho_x)^2.
\ee
Next, the estimates~\eqref{boundpsi}, the condition (TC), and Cauchy-Schwarz inequality give us 
\be
\label{aigle3}
\left| \NIII \right| 
= \left| \rho \kappa^\eps(\rho)\rho_{xx} \eta^\psi_{m u} u_x \right| \lesssim \left| \rho \kappa^\eps(\rho)\rho_{xx} u_x \right| \lesssim 
\frac{\mu^\eps  \kappa^\eps}{\rho} (\rho_{xx})^2 + \mu^\eps (u_x)^2,
\ee
To deal with $\NIV$, we use~\eqref{boundpsi} and the condition (TC) to obtain: 
\be
\label{aigle4}
\left| \eta^\psi_m (\rho {\kappa^\eps}'(\rho) - \kappa^\eps(\rho))(\rho_x)^2 \right| \lesssim \kappa^\eps(\rho) (\rho_x)^2
\lesssim \sqrt{\mu^\eps(\rho)} \sqrt{\frac{\mu^\eps \kappa^\eps}{\rho^3}} (\rho_x)^2 
= o(1) \sqrt{\frac{\mu^\eps \kappa^\eps}{\rho^3}} (\rho_x)^2.
\ee
Once again, \eqref{boundpsi}, the condition (TC), and Cauchy-Schwarz inequality give
\be
\label{aigle5}
|\NV| = \left|  (\rho {\kappa^\eps}'(\rho) - \kappa^\eps(\rho)) (\rho_x)^2 \eta^\psi_{m \rho} \rho_x \right| \lesssim | \kappa^\eps \rho^{\theta-1} (\rho_x)^2 \rho_x|
\lesssim \frac{\kappa^\eps \mu^\eps}{\rho^3} (\rho_x)^4 + \frac{\mu^\eps p'}{\rho^2} (\rho_x)^2,
\ee
Finally, the same arguments give
\be
\label{aigle6}
|\NVI| = \left| (\rho {\kappa^\eps}'(\rho) - \kappa^\eps(\rho)) (\rho_x)^2 \eta^\psi_{m u} u_x \right|
\lesssim |\kappa^\eps \rho_x^2 u_x|
\lesssim \mu^\eps (u_x)^2 + \frac{\kappa^\eps \mu^\eps}{\rho^3} (\rho_x)^4.
\ee
The estimates~(\ref{aigle1}) and~(\ref{aigle4}) imply, together with the estimates listed in \eqref{eq:650} , 
$$
\NI + \NIV \to 0 \qquad \mbox{in $H^{-1}(K\times [0,T])$) as $n \to +\infty$},
$$
whereas the estimates~(\ref{aigle2}), (\ref{aigle3}), (\ref{aigle5}), and (\ref{aigle6}) entail, together with the estimates listed in \eqref{eq:650} ,
$$
\left\| \NII + \NIII + \NV + \NVI \right\|_{L^1(K\times [0,T])} \lesssim 1.
$$
Combining the last two statements implies that the capillary term is compact in $W^{-1,q}$ for $q < 2$.

\vskip.15cm  

\noindent \textbf{4. Interpolation argument and conclusion.}
In view of~\eqref{boundpsi}, the functions $|\eta^\psi|$ and $|q^\psi|$ are bounded by $\rho$. Since $\rho$ is bounded in $L^{\gamma+1}$, it follows that $\partial_t \eta^\psi + \partial_x q^\psi$ is bounded in $W^{-1,\gamma+1}$, with of course $\gamma+1>2$. Interpolating this property with the compactness in $W^{-1,q}$ for all $q <2$ gives the desired result.
\end{proof}


\subsection{Completion of the proof of Theorem~\ref{theo:12}}

Following \cite{LW}, we can associate to the sequence $(\rho^\eps, u^\eps)$ a Young measure
 $\nu: \RR_+ \times \RR \to \text{Prob}(\RR_+ \times \RR)$, satisfying the weak convergence property \eqref{eq:nu1}
for all continuous functions $f=f(\rho,u)$ satisfying a certain growth condition. 
More precisely, the higher-integrability property of the density allows us to check that 
\eqref{eq:nu1} holds for all $| f | \leq f_0 (1 + \rho^{\gamma+1})$ with $\lim_{\rho \to + \infty} f_0(\rho) = 0$. 
This is sufficient to imply that the entropy $\eta^\psi(\rho^\eps, u^\eps)$ and 
the entropy flux $q^\psi(\rho^\eps, u^\eps)$ converge to $\la \nu, \eta^\psi \ra$ and $\la \nu, q^\psi \ra$ , respectively, 
{\sl provided} the function $\psi$ is {\sl compactly supported.}
 This convergence property suffices to state Proposition~\ref{prop:compact}, above. 

However, in order to recover the conservation laws in the Euler system and the global energy inequality, 
the additional integrability property for the velocity is required, which allows us now to use {\sl sub-cubic functions} $\psi$ for dealing with
 the entropies, and 
{\sl sub-quadratic functions} $\psi$ in the entropy flux. Observe that the local energy identity does not make sense at the 
level of regularity and integrability under consideration in the present paper. 

Finally, by applying the reduction lemma established in  \cite{LW} for Young measures satisfying Tartar's equation and associated 
with polytropic fluids, we conclude that $\nu$ is a Dirac mass or else is supported on the vacuum line. This completes the proof of 
 Theorem~\ref{theo:12}. We refer to \cite{LW} as well as \cite{GL3} for further details and generalizations, including a framework covering
 real fluids.
 

\section*{Acknowledgments}  

The first author (PG) was partially supported by NSF grant DMS-1101269, a start-up grant from the Courant Institute, and a Sloan fellowship.
This work was completed when the second author (PLF) was visiting the Courant Institute in 2011 and 2012 and was also supported by ANR grant SIMI-1-003-01. 



\begin{thebibliography}{10}
\small

\bibitem{BL1} N. Bedjaoui and P.G. LeFloch, 
Diffusive-dispersive traveling waves and kinetic relations. I. Nonconvex hyperbolic conservation laws, 
J. Differential Equations 178 (2002), 574--607.

\bibitem{BL5} N. Bedjaoui and P.G. LeFloch, 
Diffusive-dispersive traveling waves and kinetic relations. V. Singular diffusion and dispersion terms, 
Proc. Royal Soc. Edinburgh 134A (2004), 815--844. 

\bibitem{BDD1} S. Benzoni-Gavage, R. Danchin, and S. Descombes, 
Well--posedness of one--dimensional Korteweg models, Elect. Jou. Diff. Equa. 59 (2006), 1--35.  

\bibitem{BDD2} S. Benzoni-Gavage, R. Danchin, and S. Descombes,  
On the well-posedness of the Euler-Korteweg model in several space variables, 
Indiana Univ. Math. Jour. 56 (2007), 1499--1579.

\bibitem{BS} J. Bona and R. Sachs, Global existence of smooth solutions and stability of solitary 
waves for a generalized Boussinesq equation. Comm. Math. Phys. 118 (1988), 15--29. 

\bibitem{BreschDesjardins} D. Bresch and B. Desjardins, 
Existence of global weak solutions for a 2D viscous shallow water equations and convergence to the quasi-geostrophic model, 
Comm. Math. Phys. 238 (2003), 211--223.

\bibitem{BreschNoble} D. Bresch and P. Noble,
Mathematical justification of a shallow water model,
Methods Appl. Anal. 14 (2007), 87--117.

\bibitem{BrullMehats} S. Brull and F. M\'ehats,
Derivation of viscous correction terms for the isothermal quantum Euler model,
Z. Angew. Math. Mech. 90 (2010), 219--230. 

\bibitem{ChapmanCowling} S. Chapman and T.G. Cowling, 
{\sl The mathematical theory of non-uniform gases. An account of the kinetic theory of viscosity, thermal conduction and diffusion in gases,} 
Cambridge University Press, London, 1970.

\bibitem{CH} F. Charve and B. Haspot, Existence of global strong solution and vanishing 
capillarity-viscosity limit in one dimension for the Korteweg system, 
Preprint ArXiv:1110.5154v1. 

\bibitem{CL1} G.-Q. Chen and P.G. LeFloch, 
Compressible Euler equations with general pressure law,
 Arch. Rational Mech Anal. 153 (2000), 221--259.

\bibitem{CL2} G.-Q. Chen and P.G. LeFloch, 
Existence theory for the compressible isentropic Euler equations, 
Arch. Rational Mech. Anal. 166 (2003), 81--98.
  
\bibitem{CP} G.-Q. Chen and M. Perepelitsa,
Vanishing viscosity limit of the Navier--Stokes equations to the Euler equations for compressible fluid flow,
Comm. Pure Appl. Math. 63 (2010), 1469--1504. 

\bibitem{Dafermos} C. M. Dafermos, 
{\sl Hyperbolic conservation laws in continuum physics,} 
Grundlehren der Mathematischen Wissenschaften, Vol.~325 (third edition), Springer Verlag, Berlin, 2010. 

\bibitem{DanchinDesjardins}
R. Danchin and B. Desjardins,
Existence of solutions for compressible fluid models of Korteweg type,
Ann. Inst. H. Poincar\'e Anal. Nonlin\'eaire 18 (2001), 97--133. 

\bibitem{DOW} 
C. De Lellis, F. Otto, and M. Westdickenberg, 
Structure of entropy solutions for multi-dimensional scalar conservation laws, 
Arch. Rational Mech. Anal. 170 (2003), 137--184.

\bibitem{DCL} X.-X. Ding, G.-Q .Chen, and P.-Z. Luo,  
Convergence of the Lax--Friedrichs scheme for the system of equations of isentropic gas dynamics,  
Acta Math. Sci. 7 (1987), 467--480.

\bibitem{DiPerna1} R.J. DiPerna,  
Convergence of the viscosity method for isentropic gas dynamics,
Comm. Math. Phys. 91 (1983), 1--30.

\bibitem{DiPerna2} R.J. DiPerna,  
Convergence of approximate solutions to conservation laws,
Arch. Rational Mech. Anal. 82 (1983), 27--70.

\bibitem{DunnSerrin} J. Dunn and J. Serrin,
On the thermomechanics of interstitial working, 
Arch. Rational Mech. Anal. 88 (1985), 95–-133.

\bibitem{GambaJungelVasseur} I. Gamba, A. J\"ungel, and A. Vasseur, 
Global existence of solutions to one-dimensional viscous quantum hydrodynamic equations,
J. Differential Equations 247 (2009), 3117--3135. 

\bibitem{GG} S. Gavrilyuk and H. Gouin, 
A new form of governing equations of fluids arising from Hamilton's principle, 
Int. J. Eng. Scie. 37 (1999), 1495--1520.  

\bibitem{GP} 
J.F. Gerbeau and B. Perthame, 
Derivation of viscous Saint-Venant system for laminar shallow water: numerical validation, 
Discrete Contin. Dyn. Syst. Ser. B 1 (2001), 89--102.

\bibitem{GL2} P. Germain and P.G. LeFloch, 
The finite energy method for compressible fluids, 
in preparation.   

\bibitem{GL3} P. Germain and P.G. LeFloch, 
The finite energy method for compressible fluids, in preparation.  

\bibitem{GN} [8] A. E. Green and P. M. Naghdi, 
A derivation of equations for wave propagation in water of variable depth, 
J. Fluid Mech. 78 (1976), 237.

\bibitem{Hao} C. Hao, 
Cauchy problem for viscous shallow water equations with surface tension,
Discrete Contin. Dyn. Syst. Ser. B 13 (2010), 593--608. 

\bibitem{Harvey} R. Harvey,
Navier-Stokes analog of quantum mechanics, Phys. Rev. 152 (1966), 1115--1115.

\bibitem{HL} B.T. Hayes and P.G. LeFloch, 
Nonclassical shocks and kinetic relations. Finite difference schemes, 
SIAM J. Numer. Anal. 35 (1998), 2169--2194.

\bibitem{Hoefer} M.A. Hoefer, M. J. Ablowitz, I. Coddington, E.A. Cornell, P. Engels, and V. Schweikhard,
On dispersive and classical shock waves in Bose-Einstein condensates and gas dynamics, 
Physical Rev. A 74 (2006), 023623.  

\bibitem{Hoff} D. Hoff, 
Global solutions of the equations of one-dimensional, compressible flow with large data and forces, and with differing end states, 
Z. Angew. Math. Phys. 49 (1998), 774--785.

\bibitem{HPWWZ} F. Huang, R. Pan, T. Wang, Y. Wang, and X. Zhai, 
Vanishing viscosity limit for isentropic Navier-Stokes equations with density-dependent viscosity, Preprint ArXiv:1009.3978.

\bibitem{JX} Q. Jiu and Z. Xin, 
The Cauchy problem for 1D compressible flows with density-dependent viscosity coefficients, 
Kinet. Relat. Models 1 (2008), 313--330.

\bibitem{JosephLeFloch} K.T. Joseph and P.G. LeFloch, 
Singular limits in phase dynamics with physical viscosity and capillarity, 
Proc. Royal Soc. Edinburgh 137A (2007), 1287--1312.

\bibitem{Jungel} A. J\"ungel,
Global weak solutions to compressible Navier-Stokes equations for quantum fluids,
SIAM J. Math. Anal. 42 (2010), 1025--1045.

\bibitem{Kanel} I. Kanel', 
On a model system of equations for one-dimensional gas motion,
Differ. Uravn. 4 (1968), 721--734.

\bibitem{KLD} Y. Kivshar and B. Luther-Davies,
Dark optical solitons: physics and applications, Phys. Reports 298 (1998), 81--197.

\bibitem{Lax} P.D. Lax,
Shock wave and entropy, in ``Contributions to nonlinear functional analysis'', 
Proc. Sympos., Math. Res. Center, Univ. Wisconsin, Madison,  
Academic, New York, 1971, pp.~603--634. 

\bibitem{LL1} P.D. Lax and C. Levermore, 
The small dispersion limit of the Korteweg-de Vries equation. I, Comm. Pure Appl. Math. 36 (1983), 253--290. 

\bibitem{LL2} P.D. Lax and C. Levermore, 
The small dispersion limit of the Korteweg-de Vries equation. II, Comm. Pure Appl. Math. 36 (1983), 571--593. 

\bibitem{LL3} P.D. Lax and C. Levermore, 
The small dispersion limit of the Korteweg-de Vries equation. III, Comm. Pure Appl. Math. 36 (1983), 809--829. 

\bibitem{LeFloch99} P.G. LeFloch, 
Existence of entropy solutions for the compressible Euler equations, 
Conf. on ``Hyperbolic Problems: theory, numerics, and applications'' (Z\"urich, 1998), 
International Series Numer. Math. Vol.~130, Birkh\"auser Verlag, B\"asel, Switzerland, 1999, pp.~599--607.
 
\bibitem{LeFloch-book} P.G. LeFloch, 
{\sl Hyperbolic systems of conservation laws. The theory of classical and nonclassical shock waves,} 
Lectures in Mathematics, ETH Z\"urich, Birkh\"auser, 2002. 

\bibitem{LeFlochOslo} P.G. LeFloch, 
Kinetic relations for undercompressive shock waves.
Physical, mathematical, and numerical issues, 
Contemporary Math. 526 (2010), 237--272.

\bibitem{LeFlochShelukhin} P.G. LeFloch and V. Shelukhin, 
Symmetries and global solvability of the isothermal gas dynamics equations, 
Arch. Rational Mech. Anal. 175 (2005), 389--430.

\bibitem{LannesBonneton} D. Lannes and P. Bonneton,
Derivation of asymptotic two-dimensional time-dependent equations for surface water wave propagation, 
Phys. Fluids 21 (2009).

\bibitem{LW} P.G. LeFloch and M. Westdickenberg, 
Finite energy solutions of the isentropic Euler equations, 
J. Math. Pures Appli. 88 (2007), 389--429. 

\bibitem{LPS} P.-L. Lions, B. Perthame, and P.E. Souganidis, 
Existence and stability of entropy solutions for the hyperbolic systems of isentropic gas dynamics in Eulerian and Lagrangian coordinates, 
Comm. Pure Appl. Math. 49 (1996), 599--638.

\bibitem{LPT} P.-L. Lions, B. Perthame, and E. Tadmor, 
Kinetic formulation of the isentropic gas dynamics and $p$--systems, 
Comm. Math. Phys. 163 (1994), 415--431.
 
\bibitem{LV} P.-L. Lions and C. Villani, 
R\'egularit\'e optimale de racines carr\'ees, 
C. R. Acad. Sci. Paris S\'er. I 321 (1995), 1537--1541.

\bibitem{MP} A. Majda and R. Pego, 
Stable viscosity matrices for systems of conservation laws
J. Differential Equations 56 (1985), 229--1334.

\bibitem{Marche} F. Marche,
Derivation of a new two-dimensional viscous shallow water model with varying topography, bottom friction and capillary effects,
Eur. J. Mech. Fluids 26 (2007), 49--63. 

\bibitem{MelletVasseur} A. Mellet and A. Vasseur, 
On the barotropic compressible Navier--Stokes  equations, 
Comm. Partial Differential Equations 32 (2007), 431--452. 

\bibitem{MelletVasseur2} A. Mellet and A. Vasseur,
Existence and uniqueness of global strong solutions for one-dimensional compressible Navier-Stokes equations,
SIAM J. Math. Anal. 39 (2007), 1344--1365.

\bibitem{Morawetz} C.S. Morawetz, 
An alternative proof of DiPerna's theorem,
Comm. Pure Appl. Math. 44 (1991), 1081-–1090. 

\bibitem{Murat1} F. Murat, Compacit\'e par compensation,
Ann. Scuola Norm. Sup. Pisa Cl. Sci. 4 (1978), 489--507.

\bibitem{Murat2} F. Murat, 
L'injection du c\^one positif de $H\sp{-1}$ dans $W\sp{-1,\,q}$ est compacte pour tout $q<2$, 
J. Math. Pures Appl. 60 (1981), 309--322.

\bibitem{Tartar0} L. Tartar, 
Une nouvelle m\'ethode de r\'esolution d'\'equations aux d\'eriv\'ees partielles nonlin\'eaires, 
Lecture Notes in Math., Vol. 665, Springer Verlag, Berlin, 1977, pp.~228--241.

\bibitem{Tartar1} L. Tartar, 
The compensated compactness method applied to systems of conservation laws, 
in ``Systems of Nonlinear PDE's'', J.M. Ball ed.,
 NATO Series, Reidel, Dordrecht (1983), pp.~263--285.

\end{thebibliography}
\end{document}